\DeclareMathAlphabet{\mathpzc}{OT1}{pzc}{m}{it}
\title{The Preservation of  Convexity by Geodesics in the Space of K\"ahler 
Potentials on Complex Affine Manifolds}
\author{Jingchen Hu}
\newtheorem{problem}{Problem}[section]
\newtheorem{proposition}{Proposition}[section]
\newtheorem{theorem}{Theorem}[section]
\newtheorem{remark}{Remark}[section]
\newtheorem{lemma}{Lemma}[section]
\numberwithin{equation}{section}
\newtheorem{definition}{Definition}[section]
\newtheorem{assumption}{Assumption}[section]
\newcommand{\ER}{\mathbb{R}}
\newcommand{\EZ}{\mathbb{Z}}
\newcommand{\EC}{\mathbb{C}}
\newcommand{\MR}{\mathcal{R}} % Riemann Surface
\newcommand{\cMR}{{\overline{\MR}}}
\newcommand{\MV}{\mathcal{V}} % the Kahler manifold
\newcommand{\MH}{\mathcal{H}}
\newcommand{\MS}{\mathcal{S}}
\newcommand{\MG}{\mathcal{G}}
\newcommand{\ML}{\mathcal{L}}
\newcommand{\MD}{\mathcal{D}}
\newcommand{\OmegaPhi}{\Omega_0+\sqrt{-1}\ddbar\Phi}
\newcommand{\ddbar}{\partial\overline\partial}
\newcommand{\tautaubar}{\tau\overline{\tau}}
\newcommand{\taubar}{{\overline{\tau}}}
\newcommand{\abbar}{\alpha\overline{\beta}}
\newcommand{\ijbar}{i\overline j}
\newcommand{\gammabar}{{\overline{\gamma}}}
\newcommand{\alphabar}{{\overline{\alpha}}}
\newcommand{\thetabar}{{\overline{\theta}}}
\newcommand{\betabar}{{\overline{\beta}}}
\newcommand{\mubar}{{\overline{\mu}}}
\newcommand{\zetabar}{{\overline{\zeta}}}
\newcommand{\etabar}{{\overline{\eta}}}
\newcommand{\rhobar}{{\overline{\rho}}}
	\newcommand{\zbetabar}{\overline{z^{\beta}}}
\newcommand{\jbar}{{\overline{j}}}
\newcommand{\bea}{\begin{align}}
\newcommand{\ena}{\end{align}}
\newcommand{\tr}{\text{tr}}
\newcommand{\Abar}{{\overline{A}}}
	\newcommand{\Bbar}{{\overline{B}}}
	\newcommand{\MP}{\mathcal{P}}
	\newcommand{\MF}{\mathcal{F}}
	\newcommand{\SO}{$(S,\omega_0)$}
	\newcommand{\diag}{\text{diag}}
	\newcommand{\Abarinverse}{\overline{A^{-1}}}
	\newcommand{\Qsp}{Q^{<p>}_S}
	\newcommand{\Qp}{Q^{<p>}}
	\newcommand{\QsP}{Q^{[p]}_S}
	\newcommand{\Bi}{{\mathcal{B}_i}}
	\newcommand{\Bj}{{\mathcal{B}_j}}
	\newcommand{\Bjbar}{B_{\jbar}}
	\newcommand{\Bbari}{\overline{B}_i}
	\newcommand{\Ainverse}{{A^{-1}}}
	\newcommand{\Thetabar}{\overline{\Theta}}
	\newcommand{\iu}{\sqrt{-1}}
	\newcommand{\Ree}{\text{Re}}
	\newcommand{\Imm}{\text{Im}}
	\newcommand{\MB}{\mathcal{B}}
	\newcommand{\Bbariab}{\overline B_{i; \alpha\beta}}
	\newcommand{\mBiab}{\mathcal{ B}_{i; \alpha\beta}}
		\newcommand{\Bbarjab}{\overline B_{j; \alpha\beta}}
	\newcommand{\mBjab}{\mathcal{ B}_{j; \alpha\beta}}
\begin{document}
\maketitle
\begin{abstract}
On a compact complex affine manifold with a constant coefficient K\"ahler metric $\omega_0$, we introduce a concept: 
$(S,\omega_0)$-convexity and show that 
$(S,\omega_0)$-convexity is preserved by geodesics in the space of K\"ahler 
potentials. This implies that if two potentials are both strictly $(S,\omega_0)$-convex, then the metrics along the geodesic connecting them are non-degenerate.
\end{abstract}
%%%%%%%%%%%%%%%%%%%%%%%%%%%%abstract%%%%%%%
\section{Introduction}\label{sec:Introduction}
Results of this paper provide partial answers to the following questions: First, in the space of K\"ahler potentials, with the metric introduced by Semmes-Mabuchi-Donaldson, any two points can be joined by a weak geodesic, but metrics along the geodesic may be degenerate. The question is  can we pose some conditions on two  points, so that metrics along the geodesic connecting them do not degenerate? Second, the maximum rank problem has been extensively studied for a general class of fully nonlinear elliptic equations. But the situation for degenerate elliptic equations has remained unexplored, for example, degenerate complex  Monge-Amp\`ere  equation. One question is whether, under some conditions, maximum rank property holds for solutions of degenerate complex Monge-Amp\`ere  equations? 

In this paper, on a complex affine manifold with a constant coefficient metric $\omega_0$, we introduce a concept \SO-convexity, and show that if two potentials are both strictly \SO-convex, then they can be connected by a geodesic with non-degenerate metric. Under similar condition, we can show the Hessian of the solution to the homogenous complex Monge-Amp\`ere equation on an $n+1$ dimensional product space has rank $n$.

 In section \ref{sec:background} we 
introduce some basic concepts and present some former results; in section 
\ref{sec:newconstructions} we introduce the concept of \SO-convexity, an elliptic perturbation of the homogenous complex Monge-Amp\`ere equation and also present our main results; in section \ref{sec:structure_of_the_paper} we provide a bird's-eye view of results of each section and show the structure of the paper; in section \ref{sec:more_notation}, more notation and convention are introduced.
\subsection{Background}\label{sec:background}
 Given an $n$ dimensional K\"ahler manifold $(V, \omega_0)$,
 we define the space of K\"ahler potentials:
 \begin{align}
 	\MH=\{\phi\in C^{\infty}(V)|\omega_0+\sqrt{-1}\ddbar\phi>0\}.
 \end{align}
A Riemannian metric can be introduced to this space. For $\psi_1, \psi_2\in T_\phi\MH$, let
\begin{align}
	<\psi_1, \psi_2>_{\phi}=\int_{V}\psi_1\psi_2(\omega_0+\sqrt{-1}\ddbar\phi)^{n}.
\end{align}
With the Riemannian metric above, for a curve $\{\varphi(t)|t\in[0,1]\}\subset \MH$,  
its 
energy is 
\begin{align}
	E(\varphi)=\int_{0}^1\int_\MV\varphi_t^2(\omega_0+\sqrt{-1}\ddbar\varphi)^n dt.
\end{align}
In this paper we denote  $\omega_0=\sqrt{-1}b_{\alpha\betabar}dz^{\alpha}\wedge 
\overline{dz^{\beta}}$, $\alpha,\beta\in\{1,...,n\}$, and 
$g_{\alpha\betabar}=b_{\alpha\betabar}+\varphi_{\alpha\betabar}, \ 
g^{\theta\betabar}g_{\alpha\betabar}=\delta_{\theta\alpha}$.
Then the Euler-Lagrange equation for the energy above is 
\begin{align}
	\varphi_{tt}=\varphi_{t\alpha}g^{\alpha\betabar}\varphi_{t\betabar}. \label{eq:geodesicEulerLagrange}
	\end{align}
 When $\omega_0+\sqrt{-1}\ddbar\varphi>0$, equation 
 (\ref{eq:geodesicEulerLagrange}) is equivalent to 
\begin{align}\label{eq:geodesicRealMonge-Ampere}
	\det\left(\begin{array}{cc}
		   \varphi_{tt}&  \varphi_{t\betabar}\\
	 \varphi_{ \alpha t}	  & \varphi_{\alpha\betabar}+b_{\alpha\betabar}
		             \end{array}
	              \right)=0.
\end{align}
Here the curve $\varphi$ is considered as a function defined on $[0,1]\times V$. Let
\begin{align}
	\MS=\{\tau=t+\sqrt{-1}\theta|0\leq  t\leq 1\}\subset \EC.
\end{align}
We can consider $\varphi$ as a function on $\MS\times V$
by letting $\varphi(\tau)=\varphi(t)$.  Then equation (\ref{eq:geodesicRealMonge-Ampere}) becomes a homogenous complex  Monge-Amp\`ere equation:
\begin{align}\label{eq:geodesicComplexMonge-Ampere}
	\det\left(\begin{array}{cc}
		\varphi_{\tautaubar}&\varphi_{\tau\betabar} \\
	 \varphi_{\alpha\taubar}	& \varphi_{\alpha\betabar}+b_{\alpha\betabar}
	\end{array}
	\right)=0.
\end{align}
Denote the projection from $\MS\times V$ to $V$ by $\pi_V$ and denote 
$\pi_V^\ast(\omega_0)$ by $\Omega_0$. Then equation 
(\ref{eq:geodesicComplexMonge-Ampere}) becomes
\begin{align}
	(\Omega_0+\sqrt{-1}\ddbar\varphi)^{n+1}=0.
\end{align}
This leads to the study of the following Dirichlet problem on $\MS\times V$.
\begin{problem}[Geodesic Problem]
	\label{prob:geodesicMongeAmpereonStrip}
	Given $\varphi_0,\varphi_1\in \MH$, find $\Phi\in C^{1,1}(\MS\times V)$, satisfying
	\begin{align}
			(\Omega_0+\sqrt{-1}\ddbar\Phi)^{n+1}=0, &\ \ \ \ \text{ in } \MS\times V;\\
\ \ \ 	\Omega_0+\sqrt{-1}\ddbar\Phi\geq 0, &\ \ \ \ \text{ in } \MS\times V;\\
\ \ \ \ \ \ \ 	\Phi_\theta=0, &\ \ \ \ \text{ in } \MS\times V;\\
\ \ \ \ \ \ \ 	\Phi=\varphi_0, &\ \ \ \ \text{ on } \{t=0\}\times V;\\
\ \ \ \ \ \ \ 	\Phi=\varphi_1, &\ \ \ \ \text{ on } \{t=1\}\times V.
\end{align}
\end{problem}
For a solution $\Phi$ to Problem \ref{prob:geodesicMongeAmpereonStrip},  $\Phi(t, 
\ast)$ may not be in $\MH$, so we consider it as a weak or generalized geodesic connection 
$\varphi_0$ and $\varphi_1$.
 
More generally, we can replace $\MS$ by a Riemann surface $\MR$ and consider the following Dirichlet problem. In this paper we only consider the case where $\MR$ is a bounded domain in $\EC$ with smooth boundary.
\begin{problem}[A Homogenous  Monge-Amp\`ere Equation on General Product 
Spaces]
	\label{prob:HCMAproductSpace}
	Given $\MR$, a bounded domain in $\EC$ with smooth boundary, and $F\in C^{\infty}(\partial \MR\times V)$ satisfying
	\begin{align}
		\omega_0+\sqrt{-1}\ddbar(F(\tau, \ast))>0, \ \ \ \ \text{ for any } 
		\tau\in\partial\MR,
	\end{align}find $\Phi\in C^{1,1}(\MR\times V)$ which satisfies
		\begin{align}
			\label{eq:HCMA_Product}
		(\Omega_0+\sqrt{-1}\ddbar\Phi)^{n+1}=0, &\ \ \ \ \text{ in } \MR\times V;\\
		\ \ \ 	\Omega_0+\sqrt{-1}\ddbar\Phi\geq0, &\ \ \ \ \text{ in } \MR\times V;\\
		\ \ \ \ \ \ \ 	\Phi=F, &\ \ \ \ \text{ on } \partial \MR\times V.
	\end{align}
\end{problem}
\begin{remark}\label{remark:geodesicProblemEquivalence}
	Problem \ref{prob:geodesicMongeAmpereonStrip} can be reduced to Problem 
	\ref{prob:HCMAproductSpace}. Let $f$ be a holomorphic covering map from 
	$\MS$ 
	to an annulus $\{\tau|\ 1< |\tau|<2\}.$ If $\Phi$ is a solution to Problem 
	\ref{prob:HCMAproductSpace} with $\MR$ being the annulus $\{\tau|\ 1< 
	|\tau|<2\}$ 
	and $F|_{\{|\tau|=1\}}=\varphi_0$, $F|_{\{|\tau|=2\}}=\varphi_1$, then 
	$\Phi(f(\tau), z)$ 
	is a solution to Problem \ref{prob:geodesicMongeAmpereonStrip}.
\end{remark}

Problem \ref{prob:HCMAproductSpace} and \ref{prob:geodesicMongeAmpereonStrip}
 were introduced by \cite{Mabuchi} \cite{Semmes} \cite{DonaldsonSymmetricSpace}. The existence of $C^{1,1}$ solution was established by \cite{Chen2000} \cite{Blocki} \cite{JianchunFirst} \cite{Jianchun} and it was also shown by \cite{LempertLizVivas} and \cite{LempertDarvas} that the optimal regularity of general solutions is $C^{1,1}$.  
 
 Besides regularity, we may ask if 
 \begin{align}
 	\omega_0+\iu\ddbar\Phi(\tau, \ast)>0, \ \ \ \ \ \ \text{for all }\tau\in\MR,
 	\label{1119_118}
 \end{align}
for a solution $\Phi$ to Problem \ref{prob:HCMAproductSpace}. This is similar to the maximum rank problem, which asks if
\begin{align}
  \text{rank}(	\Omega_0+\iu\ddbar\Phi(\tau, \ast))=n.
   	\label{1119_119}
\end{align} 
It's easy to see that (\ref{1119_118}) implies (\ref{1119_119}). But the inverse implication may not be true.  It turns out that, for a general solution, (\ref{1119_118}) or (\ref{1119_119}) may not be valid. An example was constructed in \cite{RossNystrom}, when $\MR$ is a disc and $V$ is $\EC P^1$. In this example, a solution $\Phi$ was constructed, which satisfies
\begin{align}
	\Omega_0+\iu\ddbar\Phi=0,
\end{align}
in an open set in $\MR\times V$. However, we may ask is it possible to find some conditions for the boundary value $F$, so that if they are satisfied then (\ref{1119_118}) is valid.

Theorem 1 of \cite{DonaldsonHolomorphicDiscs} says that, when $\MR$ is a disc,  the set of smooth functions $F$ for which a smooth solution to Problem \ref{prob:HCMAproductSpace} exists is open in $C^{\infty}(\partial \MR\times V)$.  Actually the proof implies  that if the boundary value of $\Phi$ is in this set, then
(\ref{1119_118}) is satisfied. The proof also suggests that this set is open in $C^2$ topology.  The proof made use of the foliation structure associated to a solution to homogenous complex Monge-Amp\`ere equations. This technique was also used in \cite{Lempertmetrique}, to construct pluri-complex Green's function. In \cite{CFH}, by partially generalizing this technique to the case where $\MR$ is an annulus , we proved that if $|\varphi_0|_{5}+|\varphi_1|_{5}$ is small enough, then the geodesic connecting  $\varphi_0$ and $\varphi_1$ are $C^4$ and 
  \begin{align}
 	\omega_0+\iu\ddbar\Phi(t, \ast)>0, \ \ \ \ \ \ \text{for all }t\in[0,1].
 	\label{1119_121}
 \end{align}
In a recent paper \cite{HuC2Perturb}, the author improved the result above, reducing the $C^5$ smallness condition to a $C^2$ smallness condition.

However, \cite{GuanPhongMaxRank} \cite{LempertLizVivas}  \cite{HuIMRN}
and the Appendix A of \cite{HuC2Perturb} all suggest that a proper condition on $F$, which implies (\ref{1119_118}), may be a convexity condition. In \cite{GuanPhongMaxRank}, it was shown that,
when $V$ is a 1-dimensional complex flat torus, if $\varphi_1$ and $\varphi_0$ both satisfy a convexity condition, then the geodesic connecting them has non-degenerate metric. In the Appendix A of \cite{HuC2Perturb}, a  similar result was proved, for solutions to Problem \ref{prob:HCMAproductSpace}, with a very different method. %Actually, the work \cite{GuanPhongMaxRank} was noticed by the author until recently.
Furthermore,  computations of \cite{LempertLizVivas} and \cite{HuIMRN} suggest that the convexity condition is also necessary.

In this paper, when $V$ is a compact complex affine manifold with a constant coefficient metric, we introduce a concept: \SO-convexity and show that boundary values satisfying \SO-convexity implies (\ref{1119_118}).
\subsection{Notation, Constructions and Main Results}\label{sec:newconstructions}
In this paper, we will discuss the situation, where $V$ is a compact complex affine manifold with a constant coefficient K\"ahler metric $\omega_0$. By the definition of complex affine manifold, $V$ is equipped with an atlas, so that all transition maps are affine and holomorphic.  
Furthermore, we require that $\omega_0$ is a constant metric, which we mean, in any coordinate neighborhood, with coordinates $\{z^\alpha\}_{\alpha=1}^n$, if
\begin{align}
	\omega_0=\sqrt{-1}b_{\alpha\betabar}dz^\alpha \wedge \overline{dz^\beta},
\end{align}
then $b_{\alpha\betabar} $ is constant for any $\alpha, \beta\in \{1,...,n\}.$

With these preparations, we can introduce the concept of $\omega_0$-convexity.% and \SO-convexity.  
\begin{definition}
	[$\omega_0$-Convexity and Strict $\omega_0$-Convexity]
	\label{def:OmegaZeroConvexityC0_1117}
	 A function $\varphi\in C^{0}(V)$ is (strictly) $\omega_0$-convex if, in any coordinate chart, with coordinates $\{z^\alpha\}$,
	\begin{align}
	\varphi+b_{\alpha\betabar}z^\alpha\overline{z^\beta}
	\end{align}
is a (strictly) convex function.
 \end{definition}
The convexity above has been widely used in many works related to Hessian manifolds, for example, it was called local convexity in \cite{CaffarelliHessian}  and called g-convexity in \cite{GuedjToConvexity}. However, to estimate the convexity of solutions to Problem \ref{prob:HCMAproductSpace}, it's necessary to extend this concept and introduce the following concept of \SO-convexity.
\begin{definition}
	[\SO-Convexity and Strict \SO-Convexity for $C^0$ Function]
	\label{def:SOmegaZeroConvexityC0_1117}
	Suppose $S$ is a constant section of $T_{2,0}^{\ast}(V)$. Then a function $\varphi\in C^{0}(V)$ is (strictly) \SO-convex if, in any coordinate chart, with coordinates $\{z^\alpha\}$,
	\begin{align}
		\varphi+b_{\alpha\betabar}z^\alpha\overline{z^\beta}+\Ree(S_{{\alpha\beta}}z^\alpha z^\beta)
	\end{align}
	is a (strictly) convex function.
\end{definition} For a constant section, we mean, in any coordinate chart, the tensor components of $S$ are constant. 
Obviously, when $S=0$, \SO-convexity is exactly $\omega_0$-convexity.
Furthermore, to gauge the convexity, we  introduce the concept of modulus of convexity.
\begin{definition}
	[Modulus of \SO-Convexity]
	\label{def:Module_SOmegaZeroConvexityC0_1117}
	Suppose $S$ is a constant section of $T_{2,0}^{\ast}(V)$. Then a function $\varphi\in C^{0}(V)$ is  \SO-convex of modulus $\geq\mu$ if, in any coordinate chart, with coordinates $\{z^\alpha\}$,
	\begin{align}
		\varphi+(1-\mu)b_{\alpha\betabar}z^\alpha\overline{z^\beta}+\Ree(S_{{\alpha\beta}}z^\alpha z^\beta)  \label{125_1120}
	\end{align}
is a  convex function. It's \SO-convex of modulus $>\mu$ if (\ref{125_1120}) is strictly convex.
\end{definition}
\begin{remark}
	It's easy to see that if $\varphi$ is \SO-convex of modulus $>\mu$, for $\mu\geq 0$, then it's strictly \SO-convex. Another fact is that $\varphi$ is \SO-convex of modulus $>\mu$ implies that it's \SO-convex of  modulus $\geq \mu$. In addition, since $V$ is compact, $\varphi$ is \SO-convex of  modulus $\geq \mu$ implies that, for any $\mu'<\mu$, $\varphi$ is \SO-convex of modulus $> \mu'$.
\end{remark}

When the function $\varphi$ is $C^2$ continuous, the strict \SO-convexity can be defined using complex second order derivatives. Using Lemma \ref{lemma:equivalent_definition_S_Omega_Convexity}, we will see
Definition \ref{def:SOmegaZeroConvexityC0_1117} is equivalent to the following.
\begin{definition}
	[Strict \SO-Convexity for $C^2$ Functions]
	\label{def:SOmegaConvexityMatrix_1117} 	Suppose $S$ is a constant section of $T_{2,0}^{\ast}(V)$. Then a function $\varphi\in C^{2}(V)$ is   strictly \SO-convex  if
	\begin{align}
	\omega_0+\sqrt{-1}\ddbar\varphi>0
	\end{align}
and the maximal eigenvalue of the tensor \begin{align}
	K=(\varphi_{\theta\nu}-S_{\theta\nu}) g^{\nu\zetabar}\overline{(\varphi_{\eta\zeta}-S_{\eta\zeta})}g^{\gamma\etabar}dz^\theta\otimes \frac{ \partial}{\partial z^{\gamma}}
\end{align} is smaller than 1.
\end{definition}

\begin{remark}
	Here $\varphi_{\alpha\beta}dz^\alpha\otimes dz^\beta$ is a well defined section of $T_{2,0}^\ast(V)$, because the coordinate transition functions between charts are affine.  In addition, using basic linear algebra, we know eigenvalues of $K$ are real and non-negative.
\end{remark}
We introduce another measurement of the convexity:
\begin{definition}
[Degree of Convexity]
\label{def:Degree_SO_convexity}
Suppose $S$ is a constant section of $T^\ast_{2,0}(V)$ and $\delta$ is a positive number. Then a function $\varphi\in 
C^{2}(V)$ is \SO-convex of degree $>\delta$ if, for any constant section $\Theta$ of $T_{2,0}^\ast(V)$ with
\begin{align}
\text{maximum eigenvalue of }	\left(\Theta_{\alpha\eta} b^{\eta\gammabar}\  \overline{\Theta_{\rho\gamma}} b^{\beta\rhobar} dz^\alpha\otimes \frac{\partial }{\partial z^{\beta}}\right)\leq \delta^2,
\end{align}  $\varphi$ 
is a strictly $(S+\Theta,\omega_0)$-convex function.
\end{definition}
It turns out that the degree of convexity and the modulus of convexity coincide. In Lemma \ref{lemma:Equivalence Between Module of Convexity and Degree of Convexity}, we show that,  for $\varphi\in C^2(V)$ and $\delta\geq 0$,  $\varphi$ is \SO-convex of degree $>\delta$ if and only if it is \SO-convex of modulus $>\delta$.

The main results of the paper are the following.
\begin{theorem}
	[Estimates for Geodesics]
	\label{thm:estimates_for _Geodesics}
	Given $\varphi_0,\varphi_1\in \MH$, suppose that there is an $S$ which is  a constant section of $T^\ast_{2,0}(V)$, so that $\varphi_0$ and $\varphi_1$ are both  \SO-convex of modulus $>\mu$, for $\mu> 0$. Let $\{\varphi_t|\ t\in[0,1]\}$ be the geodesic connecting $\varphi_0$ and $\varphi_1$. Then, for any $t\in (0,1)$,
	$\varphi_t$ is \SO-convex of modulus $\geq\mu$ and, by definition, this implies
	\begin{align}
		\omega_0+\sqrt{-1}\ddbar\varphi_t\geq \mu\omega_0,
	\end{align}
in the weak sense.
\end{theorem}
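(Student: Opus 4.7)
The plan is to approximate the weak geodesic by smooth solutions of the non-degenerate elliptic perturbation of the HCMA introduced in Section~\ref{sec:newconstructions}, prove the preservation of strict convexity for those smooth approximations by a tensor maximum principle, and then pass to the $C^0$ limit. The bridge between a $C^2$-maximum-principle argument and the $C^0$ conclusion of the theorem is the announced equivalence between \emph{modulus} and \emph{degree} of \SO-convexity: by that equivalence, it suffices to prove that for every $\mu'<\mu$ and every constant section $\Theta$ of $T^\ast_{2,0}(V)$ whose associated tensor has maximum eigenvalue $\leq \mu'^2$, the slice $\varphi_t$ is strictly $(S+\Theta,\omega_0)$-convex in the sense of Definition~\ref{def:SOmegaZeroConvexityC0_1117}.

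Because the $C^0$ notion of Definition~\ref{def:SOmegaZeroConvexityC0_1117} is preserved under uniform limits, the problem reduces to the following smooth statement: whenever $\Phi^\epsilon$ is a smooth solution of the $\epsilon$-perturbed equation on $\MR\times V$ (with $\MR$ as in Remark~\ref{remark:geodesicProblemEquivalence}) whose boundary data on each slice are strictly $(S+\Theta,\omega_0)$-convex, the slice $\Phi^\epsilon(\tau,\ast)$ is strictly $(S+\Theta,\omega_0)$-convex for every interior $\tau$. The elliptic perturbation already guarantees $\omega_0+\iu\ddbar\Phi^\epsilon(\tau,\ast)>0$ pointwise, so via Definition~\ref{def:SOmegaConvexityMatrix_1117} what remains is to show that the largest eigenvalue of the tensor $K_\Theta$ built from $\Phi^\epsilon_{\alpha\beta}-(S+\Theta)_{\alpha\beta}$ stays strictly below $1$ on each slice.

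The heart of the argument is a tensor maximum principle on $\MR\times V$. Because the transition maps are affine and holomorphic and $S+\Theta$ is constant, $\Phi^\epsilon_{\alpha\beta}-(S+\Theta)_{\alpha\beta}$ is a globally defined section of $T^\ast_{2,0}(V)$ on each slice and $K_\Theta$ is a globally defined endomorphism of $T^{1,0}V$. I would introduce a scalar test function, either $\lambda_{\max}(K_\Theta)$ itself, or (to avoid non-smoothness at coinciding eigenvalues) $\tr(K_\Theta^p)$ for large even $p$, or $\log\bigl(1/(1-\lambda_{\max}(K_\Theta))\bigr)$, differentiate the perturbed Monge-Amp\`ere equation twice in the fiber directions, and compute the action on this test function of the linearized operator of the perturbed MA equation. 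The goal is a subharmonic-type inequality forcing the interior maximum onto $\partial\MR\times V$, where the boundary hypothesis supplies a bound strictly less than $1$. Letting $\epsilon\to 0$ and then invoking the modulus-degree equivalence in the opposite direction yields the stated weak bound $\omega_0+\iu\ddbar\varphi_t\geq \mu\omega_0$.

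The hard part will be the tensor computation: one must differentiate the perturbed $(n+1)$-dimensional Monge-Amp\`ere equation along the $V$-directions while tracking the cross second derivatives $\Phi^\epsilon_{\tau\alpha}$ produced by the base variable, check that these cross terms combine with the linearized operator to yield a favorable sign after the constant tensor $S+\Theta$ is subtracted, and handle the non-smoothness of $\lambda_{\max}$ by the standard diagonalizing-frame trick or by $\tr(K_\Theta^p)$-regularization. The constancy of $S+\Theta$ (and therefore its vanishing under the linearized operator) is what makes the local maximum-principle argument globally meaningful on the affine manifold $V$, and the use of \emph{degree} convexity is what keeps the strict eigenvalue inequality $\lambda_{\max}(K_\Theta)<1$ as the single scalar quantity to control.
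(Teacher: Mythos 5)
Your strategy is the paper's own: pass to the elliptic perturbation (Problem \ref{prob:NewPerturbation}), control the top eigenvalue of the endomorphism built from $\Phi^\epsilon_{\alpha\beta}-(S+\Theta)_{\alpha\beta}$ via $\tr(K^p)$, exploit the constancy of $S+\Theta$ under the linearized operator, convert the eigenvalue bound into a modulus bound by varying $\Theta$ (Lemma \ref{lemma:Equivalence Between Module of Convexity and Degree of Convexity}), and let $\epsilon\to 0$ using that Definition \ref{def:SOmegaZeroConvexityC0_1117} survives $C^0$ limits. The genuine gap is in your central step: the subharmonicity you invoke is not unconditional. The computation yields $L^{\ijbar}\partial_{\ijbar}\bigl(\tr (K_S^p)\bigr)\geq 0$ only where $K_S\leq 1-\frac{1}{2p}$ (Proposition \ref{prop:computation}); the matrix $\mathcal{W}_{\alpha\beta}$ in (\ref{eq:Wab_expression}) carries the negative term $-\Lambda_\alpha^{2p}-\Lambda_\beta^{2p}$ and loses positivity as eigenvalues approach $1$, and the same obstruction hits $\lambda_{\max}$ or $\log\bigl(1/(1-\lambda_{\max})\bigr)$. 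So "interior maximum forced to the boundary" cannot be concluded directly: you would need the very eigenvalue bound you are trying to prove before your test function becomes subharmonic.

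The paper breaks this circularity with a method of continuity in the boundary data (Section \ref{sec:method_continuity}): it deforms the boundary value to $\sigma F$, $\sigma\in[0,1]$, uses the monotonicity lemma (Lemma \ref{lemma:monotone}) to keep $\max_{\partial\MR\times V}Q^{[p]}_{S,\sigma}$ below $1-\frac{1}{2p}$ for every $\sigma$, uses uniqueness to get the quantity $\equiv 0$ at $\sigma=0$, and uses $C^2$-continuity of the solution family (Assumption \ref{assumption:CurveC2}) to argue that the continuous curve $\sigma\mapsto\max_{\cMR\times V}Q^{[p]}_{S,\sigma}$ can never enter the band between the boundary maximum and $1-\frac{1}{2p}$; only inside that regime does the conditional Proposition \ref{prop:computation} apply. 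A second, related omission: you take the smooth solutions $\Phi^\epsilon$ as given, but in the paper their existence is entangled with the convexity estimate itself --- Proposition \ref{prop:metric_lower_bound} provides the uniform ellipticity needed for the $C^2$ and $C^{2,\alpha}$ estimates of Section \ref{sec:Existence}, and the continuity-in-$\sigma$ hypothesis is exactly what the existence proof furnishes --- so the a priori estimate and the existence theory must be built together rather than quoting the approximating solutions off the shelf.
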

The theorem above is a particular case of the following theorem, according to Remark \ref{remark:geodesicProblemEquivalence}.
\begin{theorem}
	[Estimates on Product Space]
	\label{thm:estimate_HCMA_ProductSpace}
	Suppose $F$ is a $C^\infty$ function on $\partial\MR\times V$ and, for a constant $\mu > 0$ and a constant section $S$ of $T^\ast_{2,0}(V)$, 
	\begin{align} \label{condition:convex_module_theorem}
		\text{$F(\tau,\ast)$ is \SO-convex of modulus $>\mu$, for any $\tau\in\partial\MR$.}
	\end{align}
	  Let $\Phi$ be the solution to Problem \ref{prob:HCMAproductSpace} with boundary value $F$. Then
    $\Phi(\tau, \ast)$ is \SO-convex of modulus $\geq\mu$, for any $\tau\in\MR$, and, as a consequence
    \begin{align}
    	\omega_0+\sqrt{-1}\ddbar[\Phi(\tau,\ast)]\geq \mu\omega_0,
    \end{align}
	in the weak sense.
\end{theorem}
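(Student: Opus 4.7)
The plan is to attack Theorem \ref{thm:estimate_HCMA_ProductSpace} through an elliptic approximation. Replace (\ref{eq:HCMA_Product}) by the non-degenerate equation
\begin{align*}
(\Omega_0+\sqrt{-1}\ddbar\Phi_\epsilon)^{n+1}=\epsilon\,(\Omega_0+\omega_\MR)^{n+1}
\end{align*}
for a fixed K\"ahler form $\omega_\MR$ on $\MR$, with boundary data $F$. For each $\epsilon>0$, standard theory provides a smooth solution $\Phi_\epsilon$ with $\Omega_0+\sqrt{-1}\ddbar\Phi_\epsilon>0$ and a $C^{1,1}$ bound uniform in $\epsilon$, so that $\Phi_\epsilon\to\Phi$ subsequentially. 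The utility of this particular reference form is that $(\Omega_0+\omega_\MR)^{n+1}$ has constant coefficients in affine product coordinates on $\MR\times V$, hence $\log\det(g_{I\bar J})$ is literally a constant function on $\MR\times V$, which is what makes the linearization below clean.

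Work in a connected affine chart $U\subset V$ and set $\Psi_\epsilon=\Phi_\epsilon+(1-\mu)b_{\alpha\bar\beta}z^\alpha\overline{z^\beta}+\Ree(S_{\alpha\beta}z^\alpha z^\beta)$; by Definition \ref{def:Module_SOmegaZeroConvexityC0_1117}, the assertion to prove for $\Phi_\epsilon$ is that $\Psi_\epsilon(\tau,\cdot)$ is convex in $z\in U$ for every $\tau$. Let $v$ be any constant real tangent vector on $U$ in the $V$-directions. The two added quadratics have constant real Hessian, so $D^2_v\Psi_\epsilon=D^2_v\Phi_\epsilon+c(v)$. Differentiating $\log\det(g_{I\bar J})=\text{const}$ twice in the direction $v$ and using that $(\Omega_0)_{I\bar J}$ has constant components in affine coordinates, one gets
\begin{align*}
g^{I\bar J}\partial_I\partial_{\bar J}(D^2_v\Psi_\epsilon)=g^{I\bar K}g^{L\bar J}(\partial_v g_{I\bar J})(\partial_v g_{K\bar L})\geq 0,
\end{align*}
so $D^2_v\Psi_\epsilon$ is subharmonic for the linearized operator $\mathcal{L}:=g^{I\bar J}\partial_I\partial_{\bar J}$, which for each fixed $\epsilon>0$ is uniformly elliptic on $\MR\times U$. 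Hypothesis (\ref{condition:convex_module_theorem}), together with compactness of $\partial\MR\times V$ and of the unit sphere in $TV$, yields a uniform $\gamma>0$ with $D^2_v\Psi_\epsilon\geq\gamma$ on $\partial\MR\times V$ for every unit $v$.

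The conclusion comes from a strong maximum principle with a sliding direction. Suppose the global infimum of $D^2_v\Psi_\epsilon(\tau,p)$ over $(\tau,p)\in\MR\times V$ and unit real $v\in T_pV$ is negative, attained at a triple $(\tau_0,p_0,v_0)$. The boundary estimate forces $\tau_0\in\mathrm{int}(\MR)$. Choose an affine chart $U\ni p_0$ and extend $v_0$ constantly on $U$; by the infimum property, $(\tau_0,p_0)$ is an interior minimum of the smooth function $D^2_{v_0}\Psi_\epsilon$ on the connected domain $\MR\times U$. The strong maximum principle then forces $D^2_{v_0}\Psi_\epsilon$ to be identically constant on $\MR\times U$, and by continuity up to $\partial\MR\times U$ this constant must be $\geq\gamma>0$, contradicting the assumed negative value. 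Hence $\Psi_\epsilon(\tau,\cdot)$ is convex on every affine chart, i.e.\ $\Phi_\epsilon(\tau,\cdot)$ is $(S,\omega_0)$-convex of modulus $\geq\mu$. Letting $\epsilon\to 0$ transfers this to $\Phi$ since pointwise convexity is stable under locally uniform convergence, and the weak inequality $\omega_0+\sqrt{-1}\ddbar\Phi(\tau,\cdot)\geq\mu\omega_0$ is an unpacking of Definition \ref{def:Module_SOmegaZeroConvexityC0_1117}. The main obstacle I anticipate is making the sliding-direction maximum principle fully rigorous: the infimum over $v$ is only Lipschitz in $(\tau,p)$, and a minimizing $v_0$ admits no global extension beyond a single affine chart, so one must confirm that this chart-level application of the strong maximum principle indeed suffices for the conclusion on all of $\MR\times V$.
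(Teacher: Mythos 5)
Your elliptic approximation, the uniform-in-$\epsilon$ setup, and the final limiting step mirror what the paper does, but the core propagation step is wrong because the maximum principle is applied in the wrong direction. From the concavity of $\log\det$ you correctly get, for a constant $V$-direction $v$, that $g^{I\bar J}\partial_I\partial_{\bar J}\bigl(D^2_v\Psi_\epsilon\bigr)=\tr\bigl((g^{-1}\partial_v g)^2\bigr)\geq 0$, i.e.\ $D^2_v\Psi_\epsilon$ is a \emph{subsolution} of the linearized operator. The strong maximum principle for a subsolution forces constancy only at an interior \emph{maximum}; it says nothing at an interior \emph{minimum} (e.g.\ $|x|^2$ is subharmonic with a strict interior minimum). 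Since convexity of $\Psi_\epsilon(\tau,\cdot)$ is a \emph{lower} bound on $D^2_v\Psi_\epsilon$, your sliding-direction argument at the negative infimum $(\tau_0,p_0,v_0)$ yields no contradiction: subharmonicity of second directional derivatives only reproves the upper $C^2$-type bound (which the paper indeed uses in Section 4 via $L^{\ijbar}\partial_{\ijbar}\Phi_{XX}\geq 0$), not preservation of convexity. Passing to $\lambda_{\min}$ does not rescue this, since a minimum of subsolutions is neither a subsolution nor a supersolution, and convexity-preservation/constant-rank statements are known to require much more structure than the trivial subharmonicity of $D^2_v\Phi$.

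This is exactly why the paper's proof is organized differently: convexity of the slices is reformulated (Lemmas \ref{lemma:equivalent_definition_S_Omega_Convexity} and \ref{lemma:S_Omega_0_convexity_LinearAlgebra}) as an \emph{upper} bound, namely that the largest eigenvalue of $K_S=(B-S)\Abarinverse\,\overline{(B-S)}\Ainverse$ stays below $1$, so that the maximum principle can act in the right direction on $\Qsp=\tr(K_S^p)$. Even then the differential inequality $L^{\ijbar}\partial_{\ijbar}\Qsp\geq 0$ holds only conditionally ($K_S\leq 1-\tfrac{1}{2p}$), which forces the continuity method in the boundary data $\sigma F$ together with the monotonicity lemma, and the modulus statement is obtained by perturbing $S$ to $S+\Theta$; finally the specific perturbation of Problem \ref{prob:NewPerturbation} is chosen so that these identities for $A$ and $B$ close up. Your proposal bypasses all of this, and the step it substitutes does not hold; as written the argument fails at the "strong maximum principle then forces $D^2_{v_0}\Psi_\epsilon$ to be identically constant" claim.
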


As we discussed, solutions to Problem \ref{prob:HCMAproductSpace} may only be $C^{1,1}$, but to implement our method we need to use up to 4-th order derivatives. Therefore, we consider an elliptic perturbation of Problem \ref{prob:HCMAproductSpace}, for which there 
are 
smooth solutions. In many previous works, the following problem was studied:
\begin{problem}[Non-Degenerate Monge-Amp\`ere Equation on Product Space]
	\label{prob:epsilonCMAproductSpace}
	Given $\MR$, a bounded domain in $\EC$ with smooth boundary, and $F\in C^{\infty}(\partial \MR\times V)$ satisfying
	\begin{align}
		\omega_0+\sqrt{-1}\ddbar(F(\tau, \ast))>0, &\ \ \ \ \text{ for any } \tau\in\partial\MR,
	\end{align}find $\Phi\in C^{1,1}(\MR\times V)$ which satisfies
	\begin{align}
		(\Omega_0+\sqrt{-1}\ddbar\Phi)^{n+1}&=\varepsilon\sqrt{-1}d\tau\wedge 
		\overline{d\tau}\wedge \Omega_0^n, &\ \ \ \ \text{ in } \MR\times V;\\
		\ \ \ 	\Omega_0+\sqrt{-1}\ddbar\Phi&>0, &\ \ \ \ \text{ in } \MR\times V;\\
		\ \ \ \ \ \ \ 	\Phi&=F, &\ \ \ \ \text{ on } \partial \MR\times V.
	\end{align}
In above, $\varepsilon$ is a positive constant. 
\end{problem}
 However, in this paper we introduce a 
different perturbation.
\begin{problem}
	[An Elliptic Perturbation of Homogenous Complex Monge-Amp\`ere Equation]
	\label{prob:NewPerturbation}
	Suppose $F\in C^{\infty}(\partial \MR\times V)$ satisfies that 
		\begin{align}
		\omega_0+\sqrt{-1}\ddbar(F(\tau, \ast))>0, \ \ \ \ \text{ for any } \tau\in\partial\MR.
	\end{align} Find  $\Phi\in C^{\infty}(\MR\times V)$ satisfying:
\begin{align}
 	&(\Omega_0+\sqrt{-1}\ddbar\Phi)^{n+1}=\epsilon \sqrt{-1} d\tau\wedge\overline{d\tau}\wedge\Omega_0\wedge(\Omega_0+\sqrt{-1}\ddbar\Phi)^{n-1}, & \text{ in\ \ \  } \MR\times V;
 	\label{eq:new_Perturbation_in_Form_Form_Section1}\\
 	&\ \ \ \ \ \  \ \ \ \ 	\Omega_0+\sqrt{-1}\ddbar\Phi>0, & \text{ in\ \ \ } \MR\times V;\label{eq:Problem_NewPerturbation_NonDegenerateCondition}\\
 	&\ \ \ \ \ \ \ \ \ \  \ \ \ \ 	\Phi=F, &\text{ on \ } \partial \MR\times V.
\end{align}
Here $\epsilon$ is a positive constant.
\end{problem}

\begin{remark}
		Equation (\ref{eq:new_Perturbation_in_Form_Form_Section1}) is equivalent to 
	\begin{align}
		\Phi_{\tau\taubar}-\Phi_{\tau\betabar}g^{\alpha\betabar}\Phi_{\alpha\taubar}=\epsilon b_{\alpha\betabar}g^{\alpha\betabar},
		\label{eq:new_Perturbation_in_Inverse_Form_Section1}
	\end{align}
providing
	\begin{align}
	\omega_0+\sqrt{-1}\ddbar(\Phi(\tau, \ast))>0, \ \ \ \ \text{ for any } \tau\in\MR.
\end{align} We also notice that, when $n=1$, Problem 
\ref{prob:NewPerturbation} is exactly Problem \ref{prob:epsilonCMAproductSpace}, 
with $\varepsilon=\epsilon$. In a previous paper \cite{HuC2Perturb} we proved a 
particular 1-dimensional case of Theorem \ref{thm:estimate_HCMA_ProductSpace}, 
by 
working with solutions to Problem \ref{prob:epsilonCMAproductSpace}.
\end{remark}
In Section \ref{sec:Existence}, we prove that under the condition of Theorem \ref{thm:estimate_HCMA_ProductSpace} a smooth solution to Problem \ref{prob:NewPerturbation} exists. However, we don't know if a smooth solution always exists for general boundary value $F$. The convergence of solutions as  $\epsilon$ goes to zero is discussed in Section \ref{sec:estimates_No_Assumption}.

\subsection{Structure of the Paper}\label{sec:structure_of_the_paper}
In section \ref{sec:Equation},  by differentiating equation (\ref{eq:new_Perturbation_in_Inverse_Form_Section1}), we find second order derivatives of $\Phi$, 
\begin{align}
	A_{{\alpha\betabar}}=\Phi_{\alpha\betabar} +b_{\alpha\betabar}\ \ \ \ \ \  \text{and} \ \ \ \ \ \ B_{{\alpha\beta}}=\Phi_{\alpha\beta},
\end{align}
satisfy two non-linear equations (\ref{eq:A_section2_first_Matrix}) and (\ref{eq:B_section2_first_Matrix}).

In section \ref{sec:computation_for_Apriori_Estimate}, using $A$ and $B$, we construct $M_S$, which measures the convexity, and $\QsP$, which is a smooth approximation of $M_S$. Then, using equation (\ref{eq:A_section2_first_Matrix}) and (\ref{eq:B_section2_first_Matrix}), we show, for an elliptic operator $L^{\ijbar}\partial_{\ijbar}$ and $\Qsp=\left(\QsP\right)^p$,
\begin{align}
	L^{\ijbar}\partial_{\ijbar}\left(\Qsp\right)\geq 0,
\end{align}
providing $\QsP\leq 1-\frac{1}{2p}$.  

In section \ref{sec:method_continuity}, using a continuity argument, we show if $M_S<1$ on ${\partial \MR\times V}$, then $M_S<1$ in ${ \MR\times V}$. This means the strict \SO-convexity on $\partial\MR\times V$ implies the strict \SO-convexity in $\MR\times V$.

In section \ref{sec:metriclowerBound}, by altering $S$, we show the  \SO-convexity of modulus $>\mu$ on $\partial\MR\times V$ implies the  \SO-convexity of modulus $>\mu$ in $\MR\times V$. 

We point out that estimates in section \ref{sec:computation_for_Apriori_Estimate}, \ref{sec:method_continuity} and \ref{sec:metriclowerBound} all depends on some apriori assumptions. These assumptions can be removed after we prove the existence of smooth solutions to Problem \ref{prob:NewPerturbation}. 

In section \ref{sec:Existence}, we establish $C^0$, $C^1$, $C^2$ and $C^{2,\alpha}$ estimates. The methods are standard in PDE. In the proof, the result of section \ref{sec:metriclowerBound} plays an import role. Actually, it basically says equation (\ref{eq:new_Perturbation_in_Inverse_Form_Section1}) is a uniform elliptic equation, providing $\epsilon>0$.
With these estimates, we prove existence of $C^\infty$ solution by the method of continuity in section \ref{sec:existence_by_method_continuity}.

Finally, we prove estimates for solutions to Problem \ref{prob:geodesicMongeAmpereonStrip} and \ref{prob:HCMAproductSpace}, by letting $\epsilon\rightarrow 0.$

 \subsection{A Convention for Tensor Contraction}\label{sec:more_notation}
In this paper, we need to contract a sequence of rank $2$ tensors to form new tensors. The computation can be simplified by converting rank $2$ tensors to matrices. In this sections, we explain how to do this.

Suppose $A$ is a section of $T_{1,1}^{\ast}(V)$ and $B$ is a section of $T_{2,0}^{\ast}(V)$. In a coordinate chart, $A$ and $B$ can be considered as matrix valued functions, which we still denote by $A $ and $B$. Let
\begin{align}
	A=(A_{{\alpha\betabar}}),\ \ \ \ \ \ B=(B_{\alpha\beta}),
\end{align}
where $\alpha$ is the row index and $\beta$ is the column index. As a matrix, when $A$ is invertible, we denote
% entries of $A^{-1}$ by $A^{{\alpha\betabar}}$, with
\begin{align}
	(A^{{\alpha\betabar}})=A^{-1},
\end{align}
where $\alpha$ is the column index and $\beta$ is the row index.
Then we have
\begin{align}
	A^{\alpha\betabar} A_{\alpha\thetabar}=\delta_{\theta\beta}
\end{align}
and we know 
\begin{align}
	A^{\alpha\betabar}\frac{\partial}{\partial z^\alpha}\otimes \overline{\frac{\partial}{\partial z^\beta}}
						\label{141_1119}
\end{align}
is a section of $T_{1,1}(V)$. %We also denote the tensor in (\ref{141_1119}) by $\Ainverse$.

Let 
\begin{align}
	K_\alpha^\beta=B_{{\alpha\theta}} \overline{A^{\mu\thetabar}}\  \overline{B_{\mu\rhobar}} A^{\beta\rhobar},
\end{align}then $K_\alpha^\beta dz^\alpha\otimes \frac{\partial}{\partial z^\beta}$ is a section of $T_{1,0}^\ast(V)\otimes T_{1,0}(V)$.  Locally, we can consider $K$ as a matrix, with
\begin{align}
	K=(K_{\alpha}^{\beta}),
\end{align}
where $\alpha$ is the row index and $\beta$ is the column index.
Then
\begin{align}
	K=B\Abarinverse\  \Bbar \Ainverse.
\end{align}
It's easy to see that eigenvalues of $K$ do not change under coordinate transformations. Furthermore, for $p\in \EZ^+$, we need to use $K^p$, which is also a section of $T_{1,0}^\ast(V)\otimes T_{1,0}(V)$.

In addition, for a section $\Theta$ of $T_{1,0}^\ast(V)\otimes T_{1,0}(V)$, we denote 
\begin{align}
	\Theta<C, % (\text{or} \Theta<C)
\end{align}
for a constant $C$, if,  at any point $p\in V$, the maximum eigenvalue of $\Theta(p)$ is smaller than $C$.

%In this paper we will mainly work on the product space $\MR\times V$, where $\MR$ is a compact domain in $\EC$ with smooth boundary and $V$ is an $n_\EC$-dimensional complex torus with a flat K\"ahler metric $\omega_0$.

%%%%%%%%%%%%%%%%%%%%%%%%%%%%%%%%%%%%%%
%%%%%%%%%%%%%%%%
%%%%%%%%%%%%%%%%%%%%%%%%%%%%%%%%%%%%%%
\section{Equations for Second Order Derivatives}
            \label{sec:Equation}
            In this paper we will mainly work on the product space $\MR\times V$, where $\MR$ is a compact domain in $\EC$ with smooth boundary and $V$ is the complex affine manifold. We denote the coordinate on $\MR$ by $\tau$ and denote the coordinates on $V$ by $\{z^\alpha\}_{\alpha=1}^n$. Coordinates on $V$ are indexed by Greek letters, except $\tau$. The coordinate $\tau$ on $\MR$ will be considered as the $0$-th coordinate and, in some situation, we denote it by $z^0$. Thus, the coordinates on $\MR\times V$ will be indexed by Roman letters, running from $0$ to $n$.

            Suppose $\Phi$ is a solution to Problem \ref{prob:NewPerturbation}. Let 
            \begin{align}
            &	A_{\alpha\betabar}=\Phi_{\alpha\betabar}+b_{\alpha\betabar}, \ \ \ \ \ \ \ \ 
            	B_{\alpha\beta}=\Phi_{\alpha\beta}.&
            \end{align}
        As described in section \ref{sec:more_notation}, $A, B$ can be considered as matrices locally.
        In this section, by differentiating (\ref{eq:new_Perturbation_in_Inverse_Form_Section1}), we show, as matrices, $A, B$ satisfy the following equations:
        \begin{align}
        	L^{i\jbar}\partial_{i\jbar} A=L^{i\jbar} (\partial_i A) A^{-1} (\partial_\jbar A)+ L^{\ijbar} (\partial_\jbar B) \overline{A^{-1}} (\partial_i \overline B),
        	\label{eq:A_section2_first_Matrix}
        	\\
        	L^{i\jbar}\partial_{i\jbar} B=L^{i\jbar} (\partial_i A) A^{-1} (\partial_\jbar B)+ L^{\ijbar} (\partial_\jbar B) \overline{A^{-1}} (\partial_i \overline A).
        	\label{eq:B_section2_first_Matrix}
        \end{align}
   Here $L=L^{i\jbar}\partial_{\ijbar}$ is an elliptic operator on $\MR\times V$, with
   \begin{align}  			\label{eq:L_introduced}
   	\left(
   	\begin{array}{cc}
   		L^{0\overline 0} & L^{0\betabar}\\
   		L^{\alpha\overline 0}& L^{\alpha\betabar}
   	\end{array}
   	\right)=   	\left(
   	\begin{array}{cc}
   		1 & -\Phi_{\mu\taubar}g^{\mu\betabar}\\
   		 -\Phi_{\tau\mubar}g^{\alpha\mubar}&\Phi_{\tau\mubar}g^{\alpha\mubar} \Phi_{\mu\taubar}g^{\mu\betabar}+\epsilon b_{\eta\zetabar} g^{\alpha\zetabar}g^{\eta\betabar}
   	\end{array}
   	\right).
   \end{align}
%In the following, to simplify notation, we may omit $\partial$ when there is no  ambiguity, and denote $\partial_i A$ by $A_i$, denote $\partial_{\ijbar} A$ by  $A_{\ijbar}$, etc. Here, we should not confuse partial derivative $A_{\ijbar}$ with  coordinate components $A_{\alpha\betabar}$ because Roman indices $i,j $ are from  $0$ to $n$ while $A$ has $n\times n$ coordinate components, which should be  indexed by Greek letters. 

The computations in this section are similar to the computations of Section 2.1 and 
2.2 
of \cite{HuC2Perturb}. However, the computations here are much simpler because 
we 
are working on a flat affine manifold so $\Phi_{\alpha\beta}$ are coordinate derivatives while 
in 
\cite{HuC2Perturb} we need to compute covariant derivatives. 

Apply $\partial_\theta$ to (\ref{eq:new_Perturbation_in_Inverse_Form_Section1}), we get 
\begin{align}
	\Phi_{\theta\tau\taubar}-\Phi_{\theta\betabar\tau}g^{\alpha\betabar}\Phi_{\alpha\taubar}
	+\Phi_{\tau\betabar}g^{\alpha\mubar}\Phi_{\rho\mubar\theta}g^{\rho\betabar}\Phi_{\alpha\taubar}-\Phi_{\tau\betabar}
	 g^{\alpha\betabar}\Phi_{\alpha\taubar\theta}=-\epsilon b_{\alpha\betabar 
	}g^{\alpha\mubar}\Phi_{\rho\mubar\theta}g^{\rho\betabar}.
	\label{eq:D_theta_NewPerturbation}
\end{align}
Then apply $\partial_\gamma$ to (\ref{eq:D_theta_NewPerturbation}), we get
\begin{align}
	\label{eq:20221030-1}
	&\Phi_{\theta\gamma\tautaubar}-\Phi_{\theta\gamma\betabar\tau}g^{\alpha\betabar}\Phi_{\alpha\taubar}+\Phi_{\theta\betabar\tau}g^{\alpha\mubar}\Phi_{\gamma\mubar\rho}g^{\rho\betabar}\Phi_{\alpha\taubar}
	-\Phi_{\theta\betabar\tau}g^{\alpha\betabar}\Phi_{\alpha\gamma\taubar}+\Phi_{\tau\gamma\betabar}g^{\alpha\mubar}\Phi_{\rho\mubar\theta}g^{\rho\betabar}\Phi_{\alpha\taubar}
	\\
	\label{eq:20221030-2}
	&-\Phi_{\tau\betabar} g^{\alpha\etabar} \Phi_{\zeta\etabar\gamma}g^{\zeta\mubar}\Phi_{\rho\mubar\theta} g^{\rho\betabar} \Phi_{\alpha\taubar}+\Phi_{\tau\betabar} g^{\alpha\mubar}\Phi_{\theta\gamma\rho\mubar} g^{\rho\betabar} \Phi_{\alpha\taubar}-\Phi_{\tau\betabar}g^{\alpha\mubar} \Phi_{\rho\mubar\theta} g^{\rho\etabar}\Phi_{\zeta\etabar\gamma}g^{\zeta\betabar}\Phi_{\alpha\taubar}
	\\
	 \label{eq:20221030-3}
	&+\Phi_{\tau\betabar}g^{\alpha\mubar} \Phi_{\rho\mubar\theta} g^{\rho\betabar} \Phi_{\alpha\gamma\taubar}-\Phi_{\tau\gamma\betabar} g^{\alpha\betabar}\Phi_{\alpha\theta\taubar}
	+\Phi_{\tau\betabar}g^{\alpha\mubar}\Phi_{\zeta\mubar\gamma}g^{\zeta\betabar}\Phi_{\theta\alpha\taubar}-\Phi_{\tau\betabar}
	 g^{\alpha\betabar}\Phi_{\alpha\theta\gamma\taubar}
	 	\\
	 	\label{eq:20221030-4}
	&\ \ =\epsilon b_{\alpha\betabar }g^{\alpha\zetabar}\Phi_{\eta\zetabar\gamma}g^{\eta\mubar}\Phi_{\rho\mubar\theta}g^{\rho\betabar}-\epsilon b_{\alpha\betabar} g^{\alpha\mubar} \Phi_{\theta\gamma\rho\mubar}g^{\rho\betabar}
	+\epsilon b_{\alpha\betabar} g^{\alpha\mubar}\Phi_{\rho\mubar\theta}g^{\rho\zetabar}\Phi_{\eta\zetabar\gamma}g^{\eta\betabar}.
\end{align}

We will use the following convention. $(\ast.\ast)_k$ stands for the $k$-th term in the line $(\ast.\ast)$, including the sign. For example, 
\begin{align}
	(\ref{eq:20221030-4})_2=-\epsilon b_{\alpha\betabar} g^{\alpha\mubar} \Phi_{\theta\gamma\rho\mubar}g^{\rho\betabar}, \ \ \ \ 
		(\ref{eq:20221030-1})_1=\Phi_{\theta\gamma\tau\taubar}, \ \ \ \ 	(\ref{eq:20221030-3})_4=-\Phi_{\tau\betabar}
		g^{\alpha\betabar}\Phi_{\alpha\theta\gamma\taubar}.
\end{align}
It's straightforward to verify the following equality:
\begin{align}
	(\ref{eq:20221030-1})_1
	+(\ref{eq:20221030-1})_2
	+(\ref{eq:20221030-2})_2
	+(\ref{eq:20221030-3})_4
	-(\ref{eq:20221030-4})_2
	&=
	\Phi_{\theta\gamma i \jbar}L^{i\jbar},\\
	(\ref{eq:20221030-1})_3
	+(\ref{eq:20221030-1})_4
	+(\ref{eq:20221030-2})_1
	+(\ref{eq:20221030-3})_1
	-(\ref{eq:20221030-4})_1
	&=-\Phi_{\theta\betabar i }g^{\alpha\betabar}\Phi_{\alpha\gamma\jbar}L^{i\jbar},\\
	(\ref{eq:20221030-1})_5
	+(\ref{eq:20221030-2})_3
	+(\ref{eq:20221030-3})_2
	+(\ref{eq:20221030-3})_3
	-(\ref{eq:20221030-4})_3
	&=
	-\Phi_{\theta\rho\jbar}g^{\rho\zetabar}\Phi_{\gamma\zetabar i}L^{i\jbar}.
\end{align}
This gives us (\ref{eq:B_section2_first_Matrix}).

Similarly, we apply $\partial_\gammabar$ to (\ref{eq:D_theta_NewPerturbation}) and 
get
\begin{align}
	\label{eq:20221032-1}
	&\Phi_{\theta\gammabar\tautaubar}-\Phi_{\theta\gammabar\betabar\tau}g^{\alpha\betabar}\Phi_{\alpha\taubar}+\Phi_{\theta\betabar\tau}g^{\alpha\mubar}\Phi_{\gammabar\mubar\rho}g^{\rho\betabar}\Phi_{\alpha\taubar}
	-\Phi_{\theta\betabar\tau}g^{\alpha\betabar}\Phi_{\alpha\gammabar\taubar}+\Phi_{\tau\gammabar\betabar}g^{\alpha\mubar}\Phi_{\rho\mubar\theta}g^{\rho\betabar}\Phi_{\alpha\taubar}
	\\
	\label{eq:20221032-2}
	&-\Phi_{\tau\betabar} g^{\alpha\etabar} 
	\Phi_{\zeta\etabar\gammabar}g^{\zeta\mubar}\Phi_{\rho\mubar\theta} 
	g^{\rho\betabar} \Phi_{\alpha\taubar}+\Phi_{\tau\betabar} 
	g^{\alpha\mubar}\Phi_{\theta\gammabar\rho\mubar} g^{\rho\betabar} 
	\Phi_{\alpha\taubar}-\Phi_{\tau\betabar}g^{\alpha\mubar} 
	\Phi_{\rho\mubar\theta} 
	g^{\rho\etabar}\Phi_{\zeta\etabar\gammabar}g^{\zeta\betabar}\Phi_{\alpha\taubar}
	\\
	\label{eq:20221032-3}
	&+\Phi_{\tau\betabar}g^{\alpha\mubar} \Phi_{\rho\mubar\theta} 
	g^{\rho\betabar} 
	\Phi_{\alpha\gammabar\taubar}-\Phi_{\tau\gammabar\betabar} 
	g^{\alpha\betabar}\Phi_{\alpha\theta\taubar}
	+\Phi_{\tau\betabar}g^{\alpha\mubar}\Phi_{\zeta\mubar\gammabar}g^{\zeta\betabar}\Phi_{\theta\alpha\taubar}-\Phi_{\tau\betabar}
	g^{\alpha\betabar}\Phi_{\alpha\theta\gammabar\taubar}
	\\
	\label{eq:20221032-4}
	&\ \ =\epsilon b_{\alpha\betabar 
	}g^{\alpha\zetabar}\Phi_{\eta\zetabar\gammabar}g^{\eta\mubar}\Phi_{\rho\mubar\theta}g^{\rho\betabar}-\epsilon
	 b_{\alpha\betabar} g^{\alpha\mubar} 
	\Phi_{\theta\gammabar\rho\mubar}g^{\rho\betabar}
	+\epsilon b_{\alpha\betabar} 
	g^{\alpha\mubar}\Phi_{\rho\mubar\theta}g^{\rho\zetabar}\Phi_{\eta\zetabar\gammabar}g^{\eta\betabar}.
\end{align}
It's straightforward to verify the following equality
\begin{align}
	(\ref{eq:20221032-1})_1
	+(\ref{eq:20221032-1})_2
	+(\ref{eq:20221032-2})_2
	+(\ref{eq:20221032-3})_4
	-(\ref{eq:20221032-4})_2
	&=
	\Phi_{\theta\gammabar i \jbar}L^{i\jbar},\\
	(\ref{eq:20221032-1})_3
	+(\ref{eq:20221032-1})_4
	+(\ref{eq:20221032-2})_1
	+(\ref{eq:20221032-3})_1
	-(\ref{eq:20221032-4})_1
	&=-\Phi_{\theta\betabar i 
	}g^{\alpha\betabar}\Phi_{\alpha\gammabar\jbar}L^{i\jbar},\\
	(\ref{eq:20221032-1})_5
	+(\ref{eq:20221032-2})_3
	+(\ref{eq:20221032-3})_2
	+(\ref{eq:20221032-3})_3
	-(\ref{eq:20221032-4})_3
	&=
	-\Phi_{\theta\rho\jbar}g^{\rho\zetabar}\Phi_{\gammabar\zetabar i}L^{i\jbar}.
\end{align}
This gives us (\ref{eq:A_section2_first_Matrix}).
%%%%%%%%%%%%%%%%%%%%%%%%%%%%%%%%%%%
%%%%%%%%%%%%%%%%
%%%%%%%%%%%%%%%%%%%%%%%%%%%%%%%%%%%%%%
\section{Apriori Estimates}
            \label{sec:AprioriEstimate}
           In this section, we will prove some estimates for solutions to Problem 
           \ref{prob:NewPerturbation}, with some apriori assumptions. These assumptions can be removed after we prove the existence 
           of $C^\infty$ solutions to  Problem \ref{prob:NewPerturbation} in section 
           \ref{sec:Existence}. One estimate in this section, Prop \ref{prop:metric_lower_bound}, will play an indispensable role in our proof of the existence result.
           
           Suppose $S$ is a constant section of $T_{2,0}^\ast(V)$ and $F\in C^{\infty}(\partial \MR\times V)$ satisfies
           \begin{align}
           	F(\tau,\ast) \text{ is strictly \SO-convex, for any 
           		$\tau\in \partial \MR$. }\label{20221107BoundarySOmega_0Convexity}
           \end{align}
       For a solution $\Phi$ to Problem \ref{prob:NewPerturbation} with boundary value 
       $F$, let 
       \begin{align}
       		B&=(\Phi_{\alpha\beta}),\\
       	A&=(\Phi_{\alpha\betabar}+b_{\alpha\betabar}),\\
       	K_S&=(B-S)\Abarinverse\ \overline{(B-S)} \Ainverse,\label{defining_KS}
       \end{align}
   and, for any $(\tau, z)\in \MR\times V$,
   \begin{align}
   	M_S(\tau, z)=\text{ Maximum Eigenvalue of }K_S(\tau, z).
   \end{align}   Condition (\ref{20221107BoundarySOmega_0Convexity}) implies that 
       \begin{align}
       	M_S<1,    \text{ on } \partial \MR\times V.
       \end{align}
   We want to show that 
   \begin{align}
   	M_S<1,  \text{ in } \MR\times V.  \label{20221107Target_AprioriEstimates}
   \end{align}
This implies that $\Phi(\tau, \ast)$ is strictly \SO-convex for any $\tau\in\partial\MR$. 
However, it's difficult to directly work with $M_S$, since it may not    be differentiable. We 
introduce the following approximation of $M_S$.
Let
\begin{align}
	\Qsp=\tr(K_S^p),   \label{defining_Qsp}\\
	Q^{[p]}_{S}=\left(\Qsp\right)^{\frac{1}{p}}.\label{defining_QsP}
\end{align} 
According to basic calculus, for $\lambda_1,\ ...\ , \lambda_n\geq 0$,
\begin{align}
	\lim_{p\rightarrow +\infty}\left(\lambda_1^p+\ ...\ + 
	\lambda_n^p\right)^{1/p}=\max\{\lambda_1,\ ...\ , \lambda_n\},
\end{align}
so
\begin{align}
	\lim_{p\rightarrow +\infty}\QsP=M_S.
\end{align}
       If we can show for $p$ big enough, 
       \begin{align}
       	\QsP\leq \max_{{\partial \MR\times V}} \QsP, \ \ \ \ \text{ in }\MR\times V,
       \end{align}
   then we can let $p$ go to $\infty$ and prove (\ref{20221107Target_AprioriEstimates}).
   
   In section \ref{sec:computation_for_Apriori_Estimate}, we prove that for the elliptic 
   operator $L=L^{\ijbar}\partial_{\ijbar}$ introduced in section 
  \ref{sec:Equation}, 
   \begin{align}
   	L^{\ijbar}\left(\Qsp\right)_{\ijbar}\geq 0,\ \ \ \ \ \text{ in }\MR\times V,
   \end{align}
providing $K_S\leq 1-\frac{1}{2p}$.
       In section \ref{sec:method_continuity}, using a continuity argument we prove 
       \begin{align}
       	\QsP\leq \max_{{\partial \MR\times V}} \QsP, \ \ \ \ \text{ in }\MR\times V.
       \end{align}
       In section \ref{sec:metriclowerBound}, by altering $S$, we prove a convexity estimate, which implies a metric lower bound estimate 
       \begin{align}
       	\omega_0+\iu\ddbar\Phi(\tau,\ast)\geq \mu\omega_0, 
       \end{align}
   for a constant $\mu>0$.
       
       In section \ref{sec:method_continuity} and \ref{sec:metriclowerBound}, we need 
       an apriori assumption that for any $\lambda\in [0,1]$, Problem 
       \ref{prob:NewPerturbation} has a solution $\Phi^\lambda$ with boundary value 
       $\lambda F$ and $\{\Phi^\lambda|\lambda\in[0,1]\}$ is a continuous curve in 
       $C^4(\overline{\MR}\times V)$ in $C^2$ topology.
       
       \subsection{Computation}\label{sec:computation_for_Apriori_Estimate}
       
       Suppose $\Phi$ is a $C^4$ solution to Problem \ref{prob:NewPerturbation}. In a 
       local coordinate chart, let
       \begin{align}
       	K&=B\Abarinverse \ \Bbar \Ainverse
       \end{align}
       and, for a positive integer $p$,
       \begin{align}
       	\Qp=\tr(K^p).
       \end{align}
       As matrices, $B, A$ and $K$ depends on the choice of coordinate, while $\Qp$ is 
       a well defined function on $\MR\times V$. In this section, we show
       \begin{align}
       	L^{\ijbar} \left(\Qp\right)_{\ijbar}\geq 0, \ \ \ \text{providing }K\leq 1-\frac{1}{2p}.
       \end{align}
   After proving this, we know for 
   \begin{align}
   	K_S=(B-S)\Abarinverse \ \overline{(B-S)} \Ainverse
   \end{align}
and 
\begin{align}
	\Qsp=\tr(K_S^p),
\end{align}
\begin{align}
 L^{\ijbar}\left(\Qsp\right)_{\ijbar}\geq 0,\ \ \ \ \text{providing } K_S\leq 1-\frac{1}{2p}. 
\end{align}
This is because $B-S, A$ satisfy the same set of equations as $B, A$ do. Similar to 
(\ref{eq:A_section2_first_Matrix}) (\ref{eq:B_section2_first_Matrix}), we have:
     \begin{align}
	L^{i\jbar}\partial_{i\jbar} A&=L^{i\jbar} (\partial_i A) A^{-1} \partial_\jbar A+ L^{\ijbar} 
\partial_\jbar 	(B-S) \overline{A^{-1}} \partial_i \overline {(B-S)};
	\label{eq:A_byBS_section3_Matrix}
	\\
	L^{i\jbar}\partial_{i\jbar} (B-S)&=L^{i\jbar} (\partial_i A) A^{-1} \partial_\jbar (B-S)+ 
	L^{\ijbar} 
	\partial_\jbar (B-S) \overline{A^{-1}} (\partial_i \overline A).
	\label{eq:BS_section3_Matrix}
\end{align}
Equations above are equivalent to 
(\ref{eq:A_section2_first_Matrix}) (\ref{eq:B_section2_first_Matrix}) because $S$ is a 
constant section and  all derivatives of $S$ are zero.
 
       Before  the computation of $L^{\ijbar}\left(\Qp\right)_{\ijbar}$, we do
       some preparation. First, we note that the conjugate of equation (\ref{eq:B_section2_first_Matrix}) is 
       equivalent to 
       \begin{align}
       	L^{\ijbar}\left(\Abarinverse\ \Bbar_i 
       	\Ainverse\right)_{\jbar}=0.\label{eq:B_equation_Simplified}
       \end{align}
       Then we introduce a quantity
        \begin{align}
        	\Bi=B_i-A_i\Ainverse B-B \Abarinverse \ \overline{A}_i. \label{eq:Bmi_definition}
        \end{align} The reason to introduce $\Bi$ is to combine some terms with $B_i$ to simplify the     computation.   Even $\Bi$ can be considered as a tensor, we just need to  consider    it as a  symmetric   matrix     valued     function    defined   in a 
    local coordinate chart. (\ref{eq:Bmi_definition}) is equivalent to 
       \begin{align}
       	A^{-1} \Bi \Abarinverse=\partial_i\left(\Ainverse B \Abarinverse\right). 
       	\label{eq:Bmi_definition_simplifed}
       \end{align}
   When $\Bi$ is differentiated by $L^{\ijbar}\partial_{\jbar}$, using 
   (\ref{eq:A_section2_first_Matrix}), (\ref{eq:B_section2_first_Matrix}) and the conjugate of (\ref{eq:A_section2_first_Matrix}), we find
   \begin{align}
   	L^{\ijbar}\partial_{\jbar}\Bi=\left(-\Bjbar \Abarinverse \ \Bbar_i \Ainverse B
   																  -B\Abarinverse \ \Bbar_i \Ainverse \Bjbar \right).
   																  \label{eq:L_act_on_B_i}
   \end{align}
       
       Now we start to compute $L^{\ijbar} \left(\Qp\right)_{\ijbar}$.  In the expression of 
       $\Qp$, we combine some terms together to simplify the computation:
       \begin{align}
       	\Qp&=\tr\left(B \Abarinverse \ \Bbar \Ainverse\right)^p\\
       	      &=\tr\left[(\Ainverse B \Abarinverse)\cdot \Bbar \cdot (\Ainverse B 
       	      \Abarinverse) \cdot \Bbar \cdot ... \cdot (\Ainverse B \Abarinverse) \cdot 
       	      \Bbar\right]. \label{20221107_328}
       \end{align}
        In (\ref{20221107_328}), $ (\Ainverse B \Abarinverse)$  and 
        $\overline B$ both appear $p$ times. When $\partial_i$ act on any of $(\Ainverse B 
        \Abarinverse)$ (or $\overline B$), we get the same result. So
        \begin{align}
        	\partial_i\Qp=&p\cdot\tr\left[\partial_i(\Ainverse B \Abarinverse)\cdot \Bbar \cdot 
        	(\Ainverse B   	\Abarinverse) \cdot \Bbar \cdot ... \cdot (\Ainverse B 
        	\Abarinverse) \cdot 
        	\Bbar\right]  \label{20221107_329}\\
        	+&p\cdot\tr\left[(\Ainverse B \Abarinverse)\cdot \partial_i\Bbar \cdot 
        	(\Ainverse B   	\Abarinverse) \cdot \Bbar \cdot ... \cdot (\Ainverse B 
        	\Abarinverse) \cdot 
        	\Bbar\right].
        \end{align}
        We plug (\ref{eq:Bmi_definition_simplifed}) into (\ref{20221107_329}) and get
                \begin{align}
        	\partial_i\Qp=&p\cdot\tr\left[\Ainverse \Bi \Abarinverse\cdot \Bbar \cdot 
        	(\Ainverse B   	\Abarinverse) \cdot \Bbar \cdot ... \cdot (\Ainverse B 
        	\Abarinverse) \cdot 
        	\Bbar\right]  \label{20221107_331}\\
        	+&p\cdot\tr\left[(\Ainverse B \Abarinverse)\cdot \partial_i\Bbar \cdot 
        	(\Ainverse B   	\Abarinverse) \cdot \Bbar \cdot ... \cdot (\Ainverse B 
        	\Abarinverse) \cdot 
        	\Bbar\right]. \label{20221107_332}
        \end{align}
   We reorganize terms in the product of (\ref{20221107_331}) 
    (\ref{20221107_332}):
       \begin{align}
       	\partial_i\Qp=&p\cdot\tr\left[ \Bi\cdot (\Abarinverse\  \Bbar \Ainverse)\cdot B  \cdot
       	(\Abarinverse \ \overline{B} \Ainverse)\cdot  ... \cdot B  \cdot (\Abarinverse \ 
       	\overline{B} \Ainverse)\right]  
       	\label{20221107_333}\\
       	+&p\cdot\tr\left[(\Abarinverse \partial_i \Bbar \Ainverse)\cdot B\cdot (\Abarinverse 
       	\ \overline{B} \Ainverse)\cdot B\cdot ...\cdot     	(\Abarinverse \ \overline{B} 
       	\Ainverse) \cdot 
       	B\right]. \label{20221107_334}
       \end{align}
       When apply $L^{\ijbar}\partial_{\jbar}$ to $\Qp_i$, $L^{\ijbar}\partial_\jbar$ acts on  
       6 kinds of terms:
       \begin{itemize}
       	\item [(i)] $L^{\ijbar}\partial_\jbar$ acts on $\Bi$ in (\ref{20221107_333});
       	\item [(ii)] $L^{\ijbar}\partial_\jbar$ acts on $(\Abarinverse\  \Bbar \Ainverse)$ in 
       	(\ref{20221107_333});
       	\item [(iii)] $L^{\ijbar}\partial_\jbar$ acts on $B$ in (\ref{20221107_333});
       	\item [(iv)] $L^{\ijbar}\partial_\jbar$ acts on $(\Abarinverse \partial_i \Bbar 
       	\Ainverse)$ in (\ref{20221107_334});
       	\item [(v)]  $L^{\ijbar}\partial_\jbar$ acts on $B$ in (\ref{20221107_334});
       	\item [(vi)]  $L^{\ijbar}\partial_\jbar$ acts on $(\Abarinverse \ \overline{B} 
       	\Ainverse)$ in 
       	(\ref{20221107_334}).
       \end{itemize}
       In the following we do the computation separately.
       
       (i) When $L^{\ijbar}\partial_\jbar$ acts on $\Bi$ in (\ref{20221107_333}), the result 
       is 
       \begin{align}
       	&p\cdot \tr\left[L^{\ijbar}\partial_\jbar\Bi(\Abarinverse \ \overline{B}  	
      \Ainverse)\cdot\left(B\cdot (\Abarinverse \ \overline{B} \Ainverse)\right)^{p-1} 
      \right]L^{\ijbar}
      \\
     =&p\cdot \tr\left[-B_\jbar \Abarinverse  \ \Bbar_i \Ainverse\cdot\left(B\cdot 
     (\Abarinverse \ 
     \overline{B} 
     \Ainverse)\right)^{p} 
     \right]L^{\ijbar}
     \\
     +&p\cdot \tr\left[-\Abarinverse\  \Bbari \Ainverse B_\jbar\cdot\left( (\Abarinverse \ 
     \overline{B} 
     \Ainverse)\cdot 
     B\right)^{p} 
     \right] L^{\ijbar}.
     \end{align}
       To get this, we need to use equation (\ref{eq:L_act_on_B_i}).
     
      (ii) When $L^{\ijbar}\partial_\jbar$ acts on $(\Abarinverse\  \Bbar \Ainverse)$ in 
       (\ref{20221107_333}), we need to use the 
       conjugate of (\ref{eq:Bmi_definition_simplifed}):
          \begin{align}\label{20221107_338}
          	\Abarinverse\ \overline{\Bj} \Ainverse=\partial_{\jbar}(\Abarinverse \ \overline{B} 
          	\Ainverse). 
          \end{align}The result is the sum of $p$ terms:
      \begin{align}
      	&p\cdot \tr\left[\Bi
      	  (\Abarinverse \ \overline{B} \Ainverse B)^0
      	   \Abarinverse \ \overline{\Bj} \Ainverse (B \Abarinverse \ \Bbar 
      	\Ainverse)^{p-1}\right]L^{\ijbar}\\
      	+&p\cdot \tr\left[\Bi
      	 (\Abarinverse \ \overline{B} \Ainverse B)^1
      	 \Abarinverse \ 
      	\overline{\Bj} \Ainverse (B \Abarinverse \ \Bbar 
      	\Ainverse)^{p-2}\right]L^{\ijbar}\\
      	&\ \ \ ...  \nonumber\\
      +&	p\cdot \tr\left[\Bi (\Abarinverse \ \overline{B} \Ainverse B)^{p-1} \Abarinverse \ 
      \overline{\Bj} \Ainverse (B \Abarinverse \ \Bbar 
      \Ainverse)^0\right]L^{\ijbar}.
      \end{align}
       In above, fictitious terms $(B \Abarinverse \ \Bbar 
       \Ainverse)^0$ and $ (\Abarinverse \ \overline{B} \Ainverse B)^0$ are added in to make the pattern clearer. We will do this in other parts of the computation either.

      (iii)  When $L^{\ijbar}\partial_\jbar$ acts on $B$ in (\ref{20221107_333}), we don't 
      need to use other equations. The result is the sum of $p-1$ terms:
      \begin{align}
      	&p\cdot \tr\left[\Bi(\Abarinverse \ \overline{B} \Ainverse) 
      		(B \Abarinverse \ \Bbar \Ainverse)^{0}
      	\Bjbar 
      	(\Abarinverse \ \overline{B} \Ainverse)  (B \Abarinverse \ \Bbar 
      	\Ainverse)^{p-2}\right]L^{\ijbar}\\
      	+&p\cdot \tr\left[\Bi(\Abarinverse \ \overline{B} \Ainverse) 
      		(B \Abarinverse \ \Bbar \Ainverse)^1
      		\Bjbar 
      	(\Abarinverse \ \overline{B} \Ainverse)  (B \Abarinverse \ \Bbar 
      	\Ainverse)^{p-3}\right]L^{\ijbar}\\
      	&\ \ \ \ ...\nonumber\\
      	+&p\cdot \tr\left[\Bi(\Abarinverse \ \overline{B} \Ainverse) (B \Abarinverse \ \Bbar 
      	\Ainverse)^{p-2} \Bjbar 
      	(\Abarinverse \ \overline{B} \Ainverse)(B \Abarinverse \ \Bbar 
      	\Ainverse)^0 \right]L^{\ijbar}.
      \end{align}
     
      (iv) When  $L^{\ijbar}\partial_\jbar$ acts on $(\Abarinverse \partial_i \Bbar 
       \Ainverse)$ in (\ref{20221107_334}), the result is zero. This is because of the 
       equation (\ref{eq:B_equation_Simplified}).
             
      (v)  When  $L^{\ijbar}\partial_\jbar$ acts on $B$ in (\ref{20221107_334}), we 
      don't need to use other equations. Simply differentiating $B$, we get the following
      result which
      is the sum of $p$ terms:
         \begin{align}
      	&p\cdot \tr\left[\Abarinverse \partial_i \Bbar \Ainverse (B \Abarinverse \ \Bbar 
      	\Ainverse)^0 \Bjbar 
      	(\Abarinverse \ \overline{B} \Ainverse B)^{p-1}\right]L^{\ijbar}\\
      	+&p\cdot \tr\left[\Abarinverse \partial_i \Bbar \Ainverse(B \Abarinverse \ \Bbar 
      	\Ainverse)^1\Bjbar 
      	(\Abarinverse \ \overline{B} \Ainverse B)^{p-2}\right]L^{\ijbar}\\
      	&\ \ \ \ ...\nonumber\\
      	+&p\cdot \tr\left[\Abarinverse \partial_i \Bbar \Ainverse(B \Abarinverse \ \Bbar 
      	\Ainverse)^{p-1}\Bjbar (\Abarinverse \ \overline{B} \Ainverse B)^0 \right]L^{\ijbar}.
      \end{align}
     
      (vi)  When $L^{\ijbar}\partial_\jbar$ acts on $(\Abarinverse \ \overline{B} 
       \Ainverse)$ in  (\ref{20221107_334}), we need to use  (\ref{20221107_338}). The 
       result is the sum of $p-1$ terms 
                \begin{align}
       	&p\cdot \tr\left[(\Abarinverse \partial_i \Bbar \Ainverse) B (\Abarinverse \ \overline{B} \Ainverse B)^0
       	(\Abarinverse \ \overline{\Bj} \Ainverse) B (\Abarinverse \ \overline{B} \Ainverse 
       	B)^{p-2}\right]L^{\ijbar}\\
       	+&p\cdot \tr\left[(\Abarinverse \partial_i \Bbar \Ainverse) 
       	B(\Abarinverse \ \overline{B} \Ainverse B)^1(\Abarinverse \ \overline{\Bj} \Ainverse)
       	B(\Abarinverse \ \overline{B} \Ainverse B)^{p-3}\right]L^{\ijbar}\\
       	&\ \ \ \ ...\nonumber\\
       	+&p\cdot \tr\left[(\Abarinverse \partial_i \Bbar \Ainverse) B 
       	(\Abarinverse \ \overline{B} \Ainverse B)^{p-2}(\Abarinverse \ \overline{\Bj} 
       	\Ainverse) B (\Abarinverse \ \overline{B} \Ainverse B)^0\right]L^{\ijbar}.
       \end{align}
   
   At a point $(\tau_0, z_0)\in\MR\times V$, we change coordinate on $V$ to diagonalize 
   $A, B$. As discussed in section \ref{sec:more_notation}, we need to find $P$, so 
   that 
   \begin{align}
   	PAP^\ast=I,\ \ \ PBP^T=\Lambda=\diag(\Lambda_1,\ ...\ , \Lambda_n)\geq 0. 
   \end{align}This can be done by Lemma 
\ref{lemma:simultaneous_diagonalization}.
Denote that 
\begin{align}
	\Bi=(\MB_{i;\alpha\beta}), \ \ \ \ \partial_i \Bbar=(\overline B_{i; \alpha\beta}).
\end{align}
With these notation, results of (i)-(vi) can be simplified:
Result of (i) is:
\begin{align}
	p\left(-\Bbariab \overline{\Bbarjab}( \Lambda_\alpha^{2p}+ 
	\Lambda_\beta^{2p})\right)L^{\ijbar};
\end{align}  	result of (ii) is
\begin{align}
	p\left(\mBiab\overline{\mBjab}(\Lambda_\alpha^{2p-2}+\Lambda_\alpha^{2p-4}
	\Lambda_\beta^{2}+\ ...\ +\Lambda_\beta^{2p-2})\right)L^{\ijbar};
\end{align}
	result of (iii) is
\begin{align}
	p\left(\mBiab\overline{\Bbarjab}(\Lambda_\alpha^{2p-3}
	\Lambda_\beta+\Lambda_\alpha^{2p-5}
	\Lambda_\beta^{3}\ ...\ +\Lambda_\alpha
	\Lambda_\beta^{2p-3})\right)L^{\ijbar}; 
\end{align}result of (iv) is still $0$; 
result of (v) is
\begin{align}
	p\left(\Bbariab\overline{\Bbarjab}(\Lambda_\alpha^{2p-2}+\Lambda_\alpha^{2p-4}
	\Lambda_\beta^{2}+\ ...\ +\Lambda_\beta^{2p-2})\right)L^{\ijbar};
\end{align}
result of (vi) is
\begin{align}
	p\left(\Bbariab\overline{\mBjab}(\Lambda_\alpha^{2p-3}
	\Lambda_\beta+\Lambda_\alpha^{2p-5}
	\Lambda_\beta^3\ ...\ +\Lambda_\alpha
	\Lambda_\beta^{2p-3})\right)L^{\ijbar}.
\end{align}

Summing up results of (i)-(vi), we get
\begin{align}
	L^{\ijbar}\left(\Qp\right)_{\ijbar}=p\sum_{i,j}\sum_{\alpha,\beta} L^{\ijbar}(\mBiab, 
	\Bbariab)\mathcal{W}_{\alpha\beta}\overline{\left(\begin{array}{cc}
		\mBjab\\ \Bbarjab
	\end{array}\right)},   \label{eq:LQp_matrix_expression}
\end{align}
where
\begin{align}
	\mathcal{W}_{\alpha\beta}=\left(
	\begin{array}{cc}
		\sum_{k=0}^{p-1}\Lambda_\alpha^{2k} \Lambda_\beta^{2p-2-2k}
  		&\sum_{k=0}^{p-2}\Lambda_\alpha^{2k+1}\Lambda_\beta^{2p-3-2k}\\
  		\sum_{k=0}^{p-2}\Lambda_\alpha^{2p-3-2k}\Lambda_\beta^{2k+1}
  		&\sum_{k=0}^{p-1}\Lambda_\alpha^{2k} \Lambda_\beta^{2p-2-2k}
  		-\Lambda_\alpha^{2p}	-\Lambda_\beta^{2p}
	\end{array}
	\right).\label{eq:Wab_expression}
\end{align}
It's obvious that when $\mathcal{W}_{\alpha\beta}\geq 0$, 
(\ref{eq:LQp_matrix_expression})$\geq0$.
In the following, we show when $\Lambda_\alpha, 
\Lambda_\beta\leq \sqrt{1-\frac{1}{2p}}$, 
$\mathcal{W}_{\alpha\beta}\geq 0.$ 

According to linear algebra, 
$\mathcal{W}_{\alpha\beta}\geq 0$ if and only if $\tr (\mathcal{W}_{\alpha\beta})\geq 
0$ 
and $\det (\mathcal{W}_{\alpha\beta})\geq 0$. $\tr (\mathcal{W}_{\alpha\beta})$ is easy 
to 
compute:
\begin{align}
\tr	 (\mathcal{W}_{\alpha\beta})\geq 
\Lambda_\alpha^{2p-2}
+\Lambda_\beta^{2p-2}
-\Lambda_\alpha^{2p}
-\Lambda_\beta^{2p}.
\end{align}It's greater than $0$ providing
 \begin{align}
\Lambda_\alpha, 
\Lambda_\beta<1.\label{eq:restiction0}
\end{align} To investigate the sign of $\det(\mathcal{W}_{\alpha\beta})$, we 
need to simplify the right-hand side of (\ref{eq:Wab_expression}). When 
$\Lambda_\alpha=\Lambda_\beta$, denoting
$\Lambda_\alpha=\Lambda_\beta=\lambda$, 
\begin{align}
	\mathcal{W}_{\alpha\beta}=\left(
	\begin{array}{cc}
		p\lambda^{2p-2}& (p-1) \lambda^{2p-2}\\
		(p-1) \lambda^{2p-2}& p \lambda^{2p-2}-2\lambda^{2p}
	\end{array}
	\right).
\end{align}
The determinant follows by direct computation:
\begin{align}
	\det(\mathcal{W}_{\alpha\beta})
	=(2p-1)\lambda^{4p-4}\left[1-\frac{2p}{2p-1}\lambda^2\right].  
\end{align}
It's non-negative proving 
\begin{align}
	\lambda^2\leq 1-\frac{1}{2p}.   \label{eq:restiction1}
\end{align}
When $\Lambda_\alpha\neq \Lambda_\beta$, we use the summation formula for geometric series to compute the summation in (\ref{eq:Wab_expression}).
 We assume $\Lambda_\alpha^2>\Lambda_\beta^2$. Note that we have $\Lambda_\alpha, \Lambda_\beta\geq 0$, so when $\Lambda_\alpha\neq \Lambda_\beta$, $\Lambda^2_\alpha\neq \Lambda^2_\beta$. The result is
 \begin{align}
 	\mathcal{W}_{\alpha\beta}=
 	\left(
 	\begin{array}{cc}
 		\frac{\Lambda_\beta^{2p}-\Lambda_\alpha^{2p}}
 		           {\Lambda^2_\beta-\Lambda^2_\alpha} &
 		-\frac{\Lambda_\beta \Lambda_\alpha^{2p-1}
 	     	-\Lambda_\beta^{2p-1}\Lambda_\alpha}
               		{\Lambda^2_\beta-\Lambda^2_\alpha}	\\      
               		-\frac{\Lambda_\beta \Lambda_\alpha^{2p-1}
               		-\Lambda_\beta^{2p-1}\Lambda_\alpha}
               	{\Lambda^2_\beta-\Lambda^2_\alpha}&
               		\frac{\Lambda_\beta^{2p}-\Lambda_\alpha^{2p}}
               		{\Lambda^2_\beta-\Lambda^2_\alpha}      
               		-\Lambda_\alpha^{2p}-\Lambda_\beta ^{2p}
 	\end{array}
 	\right).
 \end{align}
The determinant follows by straightforward computation:
\begin{align}
\det(	\mathcal{W}_{\alpha\beta})
	=\frac{	(-\Lambda^{4p}_\beta+\Lambda^{4p-2}_\beta)-(-\Lambda^{4p}_\alpha+\Lambda^{4p-2}_\alpha)
				}
				{\Lambda^2_\beta-\Lambda^2_\alpha}.  \label{20221107_366}
\end{align}
Let $f(x)=-x^{2p}+x^{2p-1}.$ Then (\ref{20221107_366}) can be simplified as
\begin{align}
\det(	\mathcal{W}_{\alpha\beta})=
	\frac{f(\Lambda_\beta^2)-f(\Lambda_\alpha^2)}
	{\Lambda^2_\beta-\Lambda^2_\alpha}.
\end{align}
It's non-negative, providing $\Lambda_\alpha^2$ and $ \Lambda_\beta^2$ stay on an 
interval where $f$ is non-decreasing.  By computing the derivative of $f$ we know $f$ 
is non-decreasing on $[0,1-\frac{1}{2p}]$. Combining with (\ref{eq:restiction0}) 
(\ref{eq:restiction1}), we know $\det(\mathcal{W}_{\alpha\beta})\geq 0$, providing 
$\Lambda_\alpha^2, \Lambda_\beta^2\leq 1-\frac{1}{2p}.$ Therefore, 
\begin{align}
	L^{\ijbar}\left(\Qp\right)_{\ijbar}\geq 0, \ \ \ \ \text{ providing } K\leq 1-\frac{1}{2p}.
	\label{eq:consequence_LQPgeq0_section3_1}
\end{align}
In the computation above, we can replace $B$ by $B-S$ and get 
\begin{align}
	L^{\ijbar}\left(\Qsp\right)_{\ijbar}\geq 0, \ \ \ \ \text{ providing } K_S\leq 1-\frac{1}{2p}.
\end{align}

As a result, we have the following proposition:
\begin{proposition}
	\label{prop:computation}
	Suppose $\Phi$ is a $C^4$ solution to Problem \ref{prob:NewPerturbation} and $S$ is a constant section of $T_{2,0}^{\ast}(V)$. For $L, \ K_S,\ \Qsp$ and  $\ \QsP$,  defined by (\ref{eq:L_introduced})	(\ref{defining_KS}) 
	 (\ref{defining_Qsp}) 
	(\ref{defining_QsP}) respectively, we have 
	\begin{align}
		L^{\ijbar}\partial_{\ijbar}\left(\Qsp\right)\geq 0, 
	\end{align} 
providing $ K_S\leq 1-\frac{1}{2p},$
and, as a consequence,
\begin{align}
	\QsP\leq \max_{{\partial \MR\times V}} \QsP, \ \ \ \ \ \ \text{ in }\MR\times V,
\end{align}providing $ K_S\leq 1-\frac{1}{2p},$ in $\MR\times V$.
\end{proposition}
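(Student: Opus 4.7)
The plan is to establish the differential inequality $L^{\ijbar}\partial_{\ijbar}\Qsp\geq 0$ by direct computation of the left-hand side; the boundary bound on $\QsP$ then follows from the maximum principle, since $L$ has no zeroth-order term and $x\mapsto x^{1/p}$ is monotone. Because $S$ is a constant section, all its partial derivatives vanish, so $B-S$ satisfies exactly the same pair of equations (\ref{eq:A_section2_first_Matrix}), (\ref{eq:B_section2_first_Matrix}) as $B$. It therefore suffices to prove the analogous inequality in the case $S = 0$, i.e.\ for $\Qp = \tr(K^p)$ with $K = B\Abarinverse\,\Bbar\,\Ainverse$, and substitute $B\to B-S$ at the end.

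For the main computation I would first rewrite, after a cyclic permutation under the trace, $\Qp=\tr\bigl[(\Ainverse B\Abarinverse)\cdot\Bbar\bigr]^p$, and introduce the symmetric matrix $\Bi = B_i - A_i\Ainverse B - B\Abarinverse\,\overline A_i$. Its defining property $\Ainverse\Bi\Abarinverse = \partial_i(\Ainverse B\Abarinverse)$, together with the conjugate identity for $\partial_{\jbar}(\Abarinverse\,\Bbar\,\Ainverse)$, lets $\partial_i\Qp$ be written cleanly in terms of $\Bi$ and $\partial_i\Bbar$. Applying $L^{\ijbar}\partial_{\jbar}$ to this then distributes over six categories of factors. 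One category contributes zero, since the conjugate of (\ref{eq:B_section2_first_Matrix}) is equivalent to $L^{\ijbar}(\Abarinverse\,\Bbar_i\Ainverse)_{\jbar}=0$. Another requires an auxiliary evolution identity $L^{\ijbar}\partial_{\jbar}\Bi = -\Bjbar\Abarinverse\,\Bbar_i\Ainverse B - B\Abarinverse\,\Bbar_i\Ainverse\Bjbar$, derivable from (\ref{eq:A_section2_first_Matrix}), (\ref{eq:B_section2_first_Matrix}) and the conjugate of the former. The remaining four are handled by direct differentiation using the $\Bi$-identity and its conjugate.

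At each base point I invoke Lemma \ref{lemma:simultaneous_diagonalization} to change $V$-coordinates so that $A = I$ and $B = \Lambda = \diag(\Lambda_1,\ldots,\Lambda_n)$ with $\Lambda_\alpha\geq 0$. Collecting the six contributions in these coordinates collapses the whole expression into the bilinear form
\begin{align}
L^{\ijbar}\partial_{\ijbar}\Qp = p\sum_{i,j,\alpha,\beta} L^{\ijbar}(\mBiab,\ \Bbariab)\,\mathcal{W}_{\alpha\beta}\overline{\left(\begin{array}{c}\mBjab\\ \Bbarjab\end{array}\right)},
\end{align}
with an explicit $2\times 2$ matrix $\mathcal{W}_{\alpha\beta}$ depending only on $\Lambda_\alpha,\Lambda_\beta$. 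Since $L^{\ijbar}$ is positive definite, the sign question reduces to the scalar claim $\mathcal{W}_{\alpha\beta}\geq 0$ whenever $\Lambda_\alpha^2,\Lambda_\beta^2\leq 1-\tfrac{1}{2p}$.

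This scalar positivity is the main obstacle and is precisely what forces the sharp threshold $1-\tfrac{1}{2p}$. The trace of $\mathcal{W}_{\alpha\beta}$ is visibly $\geq 0$ as soon as $\Lambda_\alpha,\Lambda_\beta<1$. For the determinant I split into cases: when $\Lambda_\alpha=\Lambda_\beta=\lambda$, direct evaluation yields $\det\mathcal{W}_{\alpha\beta}=(2p-1)\lambda^{4p-4}\bigl(1-\tfrac{2p}{2p-1}\lambda^2\bigr)$, non-negative exactly at the threshold; when $\Lambda_\alpha^2\neq\Lambda_\beta^2$ the geometric-series formula collapses the polynomial sums in the entries of $\mathcal{W}_{\alpha\beta}$ and $\det\mathcal{W}_{\alpha\beta}$ becomes a divided difference of the one-variable function $f(x)=-x^{2p}+x^{2p-1}$ at $\Lambda_\alpha^2$ and $\Lambda_\beta^2$, and an elementary calculus check shows $f$ is non-decreasing on $[0,1-\tfrac{1}{2p}]$, closing the case. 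Substituting $B\to B-S$ throughout and invoking the maximum principle on the subsolution $\Qsp$ of $L$ then yields $\QsP\leq \max_{\partial\MR\times V}\QsP$ in $\MR\times V$.
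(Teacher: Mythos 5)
Your proposal is correct and follows essentially the same route as the paper: the same reduction to $S=0$ via constancy of $S$, the same auxiliary quantity $\Bi$ and its evolution identity, the same six-way case split after applying $L^{\ijbar}\partial_{\jbar}$, the same simultaneous diagonalization, and the same trace/determinant analysis of $\mathcal{W}_{\alpha\beta}$ (including the divided-difference argument with $f(x)=-x^{2p}+x^{2p-1}$) leading to the threshold $1-\tfrac{1}{2p}$, followed by the maximum principle. No gaps to report.
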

       %%%%==================================================
            \begin{remark}
            	Let
            	\begin{align}
            		Q^{[\rho, p]}_S=\left(\tr(K_S^{\rho p})\right)^{\frac{1}{p}}.
            	\end{align}
            	With more complicated computations we can show
            	\begin{align}
            	 L^{\ijbar} \left(		Q^{[\rho, p]}_S\right)_{\ijbar}\geq 0,
            	\end{align}
            providing $K_S\leq1- \frac{1}{2\rho}$. 
            So, by letting $p$ go to $\infty$, we know
            \begin{align}
            	L^{\ijbar}\left(M_S^\rho\right)_{\ijbar}\geq 0,
            \end{align}
        providing $K_S\leq1- \frac{1}{2\rho}$,
        in the sense of  viscosity solution (see section 6 of \cite{UserGuide}). However, 
        the  current    result    (\ref{eq:consequence_LQPgeq0_section3_1}) is enough for 
        our use.  We can also consider 
        \begin{align}
        	\tr\left(e^{p K_S}\right)   \label{mentioned_372}
        \end{align} and achieve a similar result. In \cite{HuC2Perturb}, (\ref{mentioned_372}) is used in $n=1$ case. 
            \end{remark}
  %%%%%%%%%%%%%%%%%%%%%%%%%%%%%%%%%%%%%%%%%%%%      
  %%%%%%%%%%%%%%%%%%%%%%%%%%%%%%%%%%%%%%%%%%%%%%%%%%%%==================================================
             \subsection{Preservation of \SO-Convexity by  the Method of Continuity}
             \label{sec:method_continuity}
             In addition to (\ref{20221107BoundarySOmega_0Convexity}), in this section,  we make the following assumption.
             \begin{assumption} 
             	\label{assumption:CurveC2}
             For any $\sigma\in[0,1]$, Problem \ref{prob:NewPerturbation}
             with boundary value $\sigma F$ has a solution 
             $\Phi^\sigma$ and $\{\Phi^{\sigma}|\sigma\in[0,1]\}$ is a continuous curve in 
             $C^4(\overline{\MR}\times V)$ in $C^2$ topology.
             \end{assumption}
         \begin{remark}
         	According to the ellipticity of equation (\ref{eq:new_Perturbation_in_Inverse_Form_Section1}), which is proved in section \ref{sec:PhiX_Phi_XX_Equations}, the solution to Problem \ref{prob:NewPerturbation} is unique.  As a consequence, $\Phi^0$ must equal to $0$. 
         \end{remark}
             Let
             \begin{align}
             	A_\sigma&=(\Phi^{\sigma}_{\alpha\betabar}+b_{{\alpha\betabar}}),\\
             	B_{S,\sigma}&=(\Phi^{\sigma}_{\alpha\beta}
             									+\sigma S_{{\alpha\beta}}),\\
             	K_{S, \sigma}&=B_{S,\sigma}\overline{A_\sigma^{-1}}\ 
             									\overline{B_{S,\sigma}} A_\sigma^{-1},\\
             	Q^{<p>}_{S, \sigma}&=\tr (K_{S,\sigma}^p),\\
             	Q^{[p]}_{S,\sigma}&=\left(Q^{<p>}_{S, \sigma}\right)^{\frac{1}{p}},
             \end{align}
             and for any $(\tau, z)\in \MR\times V$,
             \begin{align}
             	M_{S, \sigma}(\tau, z)=\text{Maximum Eigenvalue of } K_{S, \sigma}(\tau, z).
             \end{align}
         
             According to the assumption (\ref{20221107BoundarySOmega_0Convexity}), 
             \begin{align}
             	\max_{\partial \MR\times V} M_{S, 1}<1.
             \end{align}
         So we can choose $p$ large enough, such that
         \begin{align}
         	\max_{\partial \MR\times V} Q^{[p]}_{S,1}<1-\frac{1}{2p}.
         \end{align}
             This can be done because when $p\rightarrow +\infty$,
             \begin{align}
             	1-\frac{1}{2p}\rightarrow 1
             \end{align}
             and
             \begin{align}
             	\max_{\partial \MR\times V} Q^{[p]}_{S,1}
             	\rightarrow \max_{\partial \MR\times 	V} M_{S, 1}<1. 
             \end{align}
             According to Lemma \ref{lemma:monotone}, for  any $(\tau, z) \in \partial\MR\times V$, $Q^{[p]}_{S,\sigma}(\tau, z)$ is a 
             monotone        non-decreasing    function of     $\sigma$, so 
             $\max_{\partial \MR\times V} Q^{[p]}_{S,\sigma}$ is a monotone non-decreasing function of $\sigma$. Thus
             \begin{align}
             	\max_{\partial \MR\times V} Q^{[p]}_{S,\sigma}<1-\frac{1}{2p},
             \end{align}
             for any $\sigma\in[0,1]$. Therefore, for no $\sigma \in[0,1]$, 
             \begin{align}
             	\max_{ \cMR\times V} Q^{[p]}_{S,\sigma}\in
             	\left(\max_{\partial \MR\times V} Q^{[p]}_{S,\sigma}, 1-\frac{1}{2p}\right) . \label{383_invalid}
             \end{align} 
         %%%%%%%%%%%%%%%%%%%%%%-----------
         \iffalse 
         In other words, for no $\sigma\in[0,1]$, the point 
            \begin{align}
            	(\sigma, \max_{\cMR\times V}Q^{[p]}_{S,\sigma})
            \end{align}
        can stay in the shadowed area in Figure 
            \ref{fig:Method_of_Continuity}.
            \fi
            %%%%%%%%%%%%%%%%%%%%%-----------
             This is because if  (\ref{383_invalid}) is valid, for $\sigma=\sigma_0$, then
             \begin{align}
             	\max_{\cMR\times V}Q^{[p]}_{S,\sigma_0}\leq 1-\frac{1}{2p}, 
             \end{align} and, as a consequence,
         \begin{align}
         	\max_{\cMR\times V} M_{S, \sigma_0} \leq 1-\frac{1}{2p}.
         \end{align}
So, using Proposition \ref{prop:computation}, we know
     \begin{align}
     	\max_{\cMR\times V} Q^{[p]}_{S,\sigma_0}\leq
     	 \max_{\partial \MR\times V} Q^{[p]}_{S,\sigma_0},
     \end{align} which contradicts with (\ref{383_invalid}).
 
 The function $\max_{\cMR\times V} Q^{[p]}_{S,\sigma}$ is a continuous 
     function of $\sigma$, because  of Assumption \ref{assumption:CurveC2}.  And $\max_{\cMR\times V} 
     Q^{[p]}_{S,0}=0$, according to the uniqueness of $C^2$ solution. So  
     \begin{align}
     	\max_{\cMR\times V} Q^{[p]}_{S,\sigma}\leq \max_{\partial \MR\times V} 
     																							Q^{[p]}_{S,\sigma}
     																							, \ \ \ \ \text{for any }\sigma\in[0,1]. 
     \end{align}  
 
    \begin{figure}[h]
     \centering  
     \includegraphics[height=5.5cm]{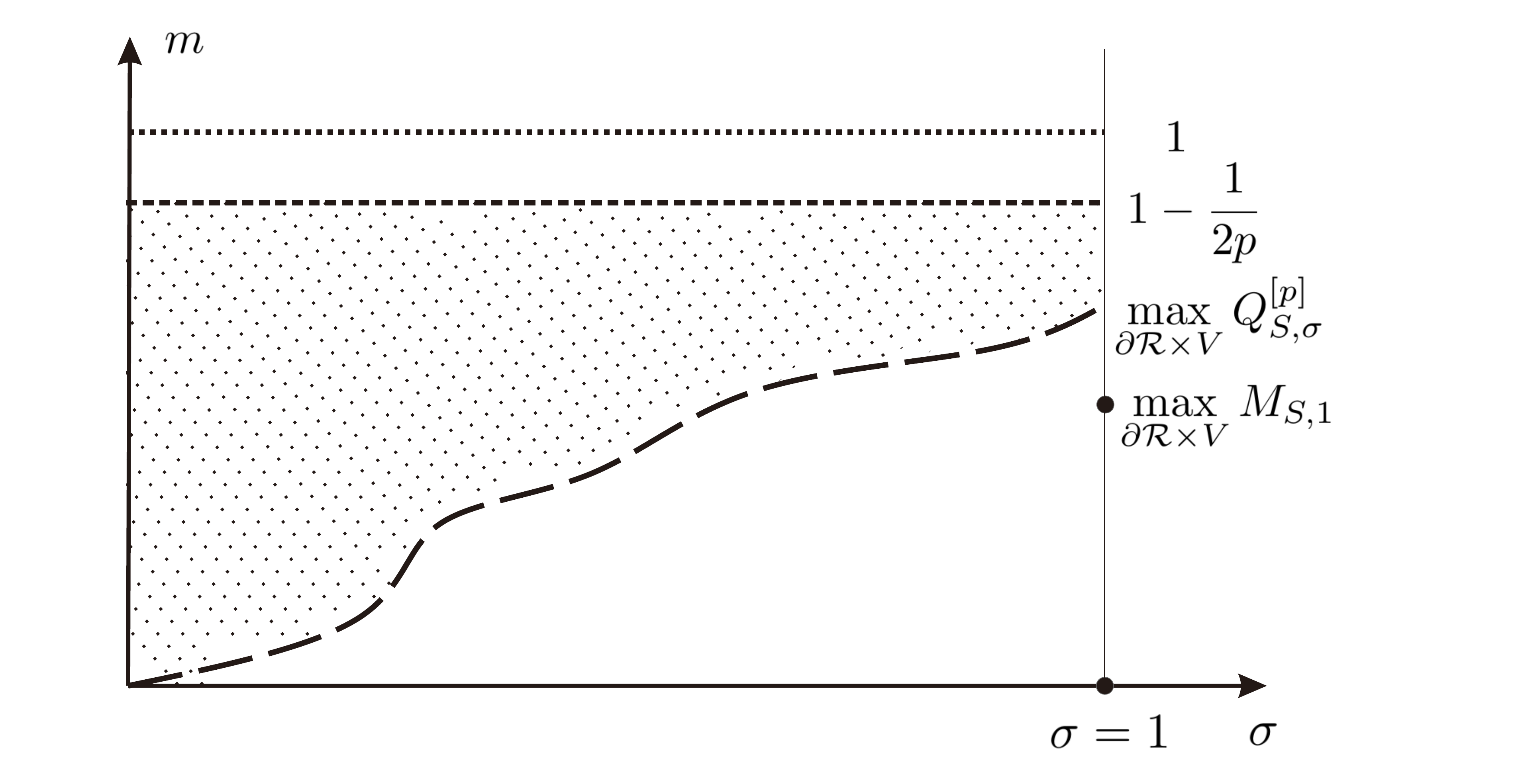}
     \caption{Method of Continuity}
     \label{fig:Method_of_Continuity}
 \end{figure}
 As illustrated by Figure \ref{fig:Method_of_Continuity}, 
  the dashed curve
 \begin{align}
 	\left\{	(\sigma, \max_{\partial\MR\times V} Q^{[p]}_{S,\sigma})\big| \sigma\in[0,1]\right\}
 \end{align} raises up from left to right and stays below the line $\{m=1-\frac{1}{2p}\}$. The continuous curve
 \begin{align}
 \left\{	(\sigma, \max_{\cMR\times V} Q^{[p]}_{S,\sigma})\big| \sigma\in[0,1]\right\},
 \end{align} whose left endpoint is $(0,0)$,  cannot intersect with the shadowed area, so it has to stay below the shadowed area.  Actually, it has to coincide with the dashed curve.

Therefore, we have, for any $\sigma\in[0,1]$,
 \begin{align}
 	M_{S, \sigma}\leq  \max_{\overline{\MR}\times V} Q^{[p]}_{S,\sigma}
 =
 	 \max_{\partial\MR\times V} Q^{[p]}_{S,\sigma}\leq  \max_{\partial\MR\times V} Q^{[p]}_{S,1}, \ \ \ \ \ \ 
 	\text{ in }\MR\times V.
 \end{align}
Let $p\rightarrow 0$, we get
 \begin{align}
	M_{S, \sigma}\leq \max_{\partial \MR\times V}M_{S,1}, \ \ \ \ \ \ 
	\text{ in }\MR\times V.
\end{align}

In sum, we have the following Proposition.

\begin{proposition}
	\label{prop:Preservation_Semi_Convexity_by_Continuity_Method}
	Suppose $S$ is a constant section of $T_{2,0}^{\ast}(V)$ and  $F\in C^2({\partial \MR\times V})$ satisfies that
	\begin{align}
		F(\tau, \ast) \text{ is strictly \SO-convex, for any $\tau\in\partial\MR$.}
	\end{align}
In addition, we assume that the Assumption \ref{assumption:CurveC2} is satisfied. Then the solution $\Phi$ to Problem \ref{prob:NewPerturbation}, with boundary value $F$, satisfies
\begin{align}
	\Phi(\tau, \ast) \text{ is strictly \SO-convex, for any $\tau\in\MR$.}
\end{align}
\end{proposition}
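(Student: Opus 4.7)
The plan is to run a method-of-continuity argument along the family $\{\Phi^\sigma\}_{\sigma\in[0,1]}$ furnished by Assumption \ref{assumption:CurveC2}, interpolating from the zero solution at $\sigma=0$ to the desired solution at $\sigma=1$, while tracking the smoothed convexity quantity $Q^{[p]}_{S,\sigma}$. The point is that Proposition \ref{prop:computation} forbids $\max_{\cMR\times V} Q^{[p]}_{S,\sigma}$ from sitting strictly above its boundary maximum as long as $K_{S,\sigma}\leq 1-\frac{1}{2p}$, while continuity in $\sigma$ forbids the path from jumping over this forbidden window.

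First I would harvest the boundary information. Since $F(\tau,\ast)$ is strictly \SO-convex on $\partial \MR$ and $\partial\MR\times V$ is compact, one has $\max_{\partial\MR\times V}M_{S,1}<1$. Because $Q^{[p]}_{S,1}\to M_{S,1}$ uniformly as $p\to\infty$ and $1-\frac{1}{2p}\to 1$, I can fix a single large $p$ with
\begin{align*}
\max_{\partial\MR\times V} Q^{[p]}_{S,1}<1-\tfrac{1}{2p}.
\end{align*}
Next, invoking the monotonicity statement (Lemma \ref{lemma:monotone}) pointwise on $\partial\MR\times V$ in $\sigma$, the same bound
$\max_{\partial\MR\times V} Q^{[p]}_{S,\sigma}<1-\frac{1}{2p}$
holds for every $\sigma\in[0,1]$.

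Now set $m(\sigma):=\max_{\cMR\times V} Q^{[p]}_{S,\sigma}$; by Assumption \ref{assumption:CurveC2} this is continuous in $\sigma$, and $m(0)=0$ because uniqueness of the perturbed equation forces $\Phi^0=0$, hence $B_{S,0}=0$. I claim $m(\sigma)$ can never fall into the half-open window $\bigl(\max_{\partial\MR\times V} Q^{[p]}_{S,\sigma},\,1-\frac{1}{2p}\bigr]$. Indeed, if $m(\sigma_0)\leq 1-\frac{1}{2p}$, then $K_{S,\sigma_0}\leq 1-\frac{1}{2p}$ throughout $\cMR\times V$ (by comparison of $M_{S,\sigma_0}$ and $Q^{[p]}_{S,\sigma_0}$), so Proposition \ref{prop:computation} gives $L^{i\jbar}\partial_{i\jbar}Q^{<p>}_{S,\sigma_0}\geq 0$, and the classical maximum principle for the linear elliptic operator $L$ (ellipticity will be discussed in section \ref{sec:PhiX_Phi_XX_Equations}) forces $m(\sigma_0)\leq \max_{\partial\MR\times V} Q^{[p]}_{S,\sigma_0}$, pushing $m(\sigma_0)$ out of the window. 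Combined with $m(0)=0$ and continuity of $m$, the path can never cross the strip above $\max_{\partial\MR\times V} Q^{[p]}_{S,\sigma}$, so in fact
\begin{align*}
m(\sigma)\leq \max_{\partial\MR\times V} Q^{[p]}_{S,\sigma}, \qquad \sigma\in[0,1].
\end{align*}

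Finally, letting $p\to\infty$ converts this into $M_{S,\sigma}\leq \max_{\partial\MR\times V}M_{S,\sigma}$ in $\cMR\times V$; specializing to $\sigma=1$ and using $\max_{\partial\MR\times V}M_{S,1}<1$ yields $M_S<1$ throughout, which is precisely strict \SO-convexity of $\Phi(\tau,\ast)$ for every $\tau\in \MR$. The most delicate step is the continuity-barrier argument: I must rule out the path $m(\sigma)$ hopping from the safe region (below the boundary maximum) across the forbidden window into some bad region above $1-\frac{1}{2p}$; this is handled by combining the strict inequality $\max_{\partial\MR\times V}Q^{[p]}_{S,\sigma}<1-\frac{1}{2p}$ with the continuity provided by Assumption \ref{assumption:CurveC2} and the sharp maximum-principle output of Proposition \ref{prop:computation}.
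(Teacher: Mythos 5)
Your proposal is correct and follows essentially the same route as the paper: fix a large $p$ using the strict boundary convexity, use Lemma \ref{lemma:monotone} to keep $\max_{\partial\MR\times V}Q^{[p]}_{S,\sigma}$ below $1-\frac{1}{2p}$ for all $\sigma$, run the forbidden-window continuity argument starting from $\Phi^0=0$ with Proposition \ref{prop:computation} supplying the maximum principle, and let $p\to\infty$. The only differences are cosmetic (a half-open versus open window, and concluding $M_{S,\sigma}\leq\max_{\partial\MR\times V}M_{S,\sigma}$ rather than $\leq\max_{\partial\MR\times V}M_{S,1}$), neither of which affects the argument.
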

             %%%%%%%%%%%%%%%%%%%%%%%%%%%%%%%%%%%%%%%%%%%%%%%%
             %%%%%==================================================
             \subsection{Convexity and Metric Lower Bound Estimates by Altering $S$}   
             \label{sec:metriclowerBound}
            
            %Suppose a constant section $S$ of $T_{2,0}^{\ast}(V)$, an $F\in C^{\infty}(\partial \MR\times V)$ and the solution to problem 
            
            In this section, we assume that the conditions of Proposition \ref{prop:Preservation_Semi_Convexity_by_Continuity_Method} are satisfied. In addition, we assume that, for a constant $\delta>0$, 
            \begin{align}
            	F(\tau, \ast) \text{ is \SO-convex of modulus $>\delta$,  for any $\tau\in\partial \MR$}, \label{condition:Convex_of_Module_delta}
            \end{align}
        or equivalently, according to Lemma \ref{lemma:Equivalence Between Module of Convexity and Degree of Convexity}, 
              \begin{align}
            	F(\tau, \ast) \text{ is \SO-convex of degree $>\delta$,  for any $\tau\in\partial \MR$}. \label{condition:Convex_of_degree_delta}
            \end{align}
            
            Condition (\ref{condition:Convex_of_degree_delta}) says, for any constant section $\Theta$ of $T_{2,0}^\ast(V)$, with 
            \begin{align}
            	\Theta \overline{W^{-1}}\ \Thetabar W^{-1}\leq \delta^2,   \label{3102_1119}
            \end{align}we have
             \begin{align}
            	F(\tau, \ast) \text{ is strictly $(S+\Theta,\omega_0)$-convex,  for any $\tau\in\partial \MR$}.
            \end{align}
        Here $W=(b_{\alpha\betabar})$.
         So, by Proposition \ref{prop:Preservation_Semi_Convexity_by_Continuity_Method}, for any constant section $\Theta$ of $T_{2,0}^\ast(V)$, satisfying (\ref{3102_1119}), we have 
      \begin{align}
      \Phi(\tau, \ast) \text{ is strictly $(S+\Theta,\omega_0)$-convex,  for any $\tau\in \MR$}.
      \end{align}
 Therefore,% for any $\tau\in\MR$, 
    \begin{align}
 	\Phi(\tau, \ast) \text{ is \SO-convex of degree $>\delta$,  for any $\tau\in \MR$}. \label{3104_1119}
 \end{align}
By Lemma \ref{lemma:Equivalence Between Module of Convexity and Degree of Convexity}, we know  (\ref{3104_1119}) is equivalent to 
    \begin{align}
	\Phi(\tau, \ast) \text{ is \SO-convex of modulus $>\delta$,  for any $\tau\in \MR$}. \label{3105_1119}
\end{align}
Then, by the definition of modulus of convexity, Definition \ref{def:Module_SOmegaZeroConvexityC0_1117}, we know
\begin{align}
	\omega_0+\iu\ddbar\Phi(\tau, \ast)> \delta \omega_0, \ \ \ \ \ \ \text{for any }\tau\in\MR.
\end{align}

                        As a result, we have the following convexity and metric lower bound estimate:
                         \begin{proposition}
                         	[Apriori Convexity and Metric Lower Bound Estimate]
                         	\label{prop:metric_lower_bound}
                         	Suppose that, for a constant $\delta>0$ and a constant  section $S$ of $T_{2,0}^{\ast}(V)$, $F\in C^{\infty}({\partial \MR\times V})$ satisfies
                         	 \begin{align}
                         		F(\tau, \ast) \text{ is \SO-convex of degree $>\delta$,  for any $\tau\in\partial \MR$}.
                         	\end{align}
                         	In addition, Assumption \ref{assumption:CurveC2} is satisfied. Then a solution $\Phi$ to Problem \ref{prob:NewPerturbation}, with boundary value $F$ satisfies
                         	\begin{align}
                         		\Phi(\tau, \ast) \text{ is \SO-convex of degree $>\delta$,  for any $\tau\in \MR$}
                         	\end{align}
                     and, as a consequence,
                     	\begin{align}
                     \omega_0+\iu\ddbar	\Phi(\tau, \ast)>\delta\omega_0 \text{, \ \ \ \ \ \  for any $\tau\in \MR$}. 
                     \end{align}
                         \end{proposition}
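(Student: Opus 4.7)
The statement is essentially a repackaging of Proposition \ref{prop:Preservation_Semi_Convexity_by_Continuity_Method} through the equivalence between modulus and degree of convexity, so I do not expect any new analytic estimate to be required. My plan is to first translate the hypothesis via Lemma \ref{lemma:Equivalence Between Module of Convexity and Degree of Convexity}, then invoke Proposition \ref{prop:Preservation_Semi_Convexity_by_Continuity_Method} for a whole family of shifted tensors $S+\Theta$, and finally translate back.

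Concretely, the hypothesis that $F(\tau,\ast)$ is \SO-convex of degree $>\delta$ on $\partial\MR$ unfolds, by Definition \ref{def:Degree_SO_convexity}, into the following statement: for every constant section $\Theta$ of $T^{\ast}_{2,0}(V)$ with
\begin{align}
\Theta\,\overline{W^{-1}}\,\overline{\Theta}\,W^{-1}\leq \delta^{2},\qquad W=(b_{\alpha\betabar}),
\end{align}
the boundary datum $F(\tau,\ast)$ is strictly $(S+\Theta,\omega_{0})$-convex for every $\tau\in\partial\MR$. I would then apply Proposition \ref{prop:Preservation_Semi_Convexity_by_Continuity_Method} with the constant section $S$ replaced by $S+\Theta$. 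This is legitimate because Assumption \ref{assumption:CurveC2} is a statement about the family of solutions $\{\Phi^{\sigma}\}_{\sigma\in[0,1]}$ with boundary data $\sigma F$, and makes no reference to any particular constant $(2,0)$-tensor; hence the assumption is inherited automatically for every $\Theta$. Proposition \ref{prop:Preservation_Semi_Convexity_by_Continuity_Method} then yields that $\Phi(\tau,\ast)$ is strictly $(S+\Theta,\omega_{0})$-convex on all of $\MR$, for every admissible $\Theta$.

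Reading this family of conclusions through Definition \ref{def:Degree_SO_convexity} once more, $\Phi(\tau,\ast)$ is \SO-convex of degree $>\delta$ for every $\tau\in\MR$, which is the first conclusion. Invoking Lemma \ref{lemma:Equivalence Between Module of Convexity and Degree of Convexity} in the reverse direction converts this to \SO-convexity of modulus $>\delta$; and finally, Definition \ref{def:Module_SOmegaZeroConvexityC0_1117} (applied with $\mu=\delta$) gives at once
\begin{align}
\omega_{0}+\iu\ddbar\Phi(\tau,\ast)>\delta\,\omega_{0}\qquad\text{for every }\tau\in\MR,
\end{align}
since the strict convexity of the representative potential $\Phi+(1-\delta)b_{\alpha\betabar}z^{\alpha}\overline{z^{\beta}}+\Ree(S_{\alpha\beta}z^{\alpha}z^{\beta})$ in any coordinate chart is exactly the statement that $(g_{\alpha\betabar}-\delta b_{\alpha\betabar})$ is positive.

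The main work is therefore hidden in Proposition \ref{prop:Preservation_Semi_Convexity_by_Continuity_Method} (which rests on the apriori computation of Section \ref{sec:computation_for_Apriori_Estimate} and on Assumption \ref{assumption:CurveC2}) and in the equivalence lemma relating modulus and degree; both are available here as black boxes. The only potential obstacle is checking that Assumption \ref{assumption:CurveC2} is preserved under replacing $S$ by $S+\Theta$, but as noted above this assumption depends only on the boundary datum $F$ and on the $\epsilon$-perturbed equation \eqref{eq:new_Perturbation_in_Inverse_Form_Section1}, neither of which involve $S$ at all. Hence the entire argument reduces to the two translations described above and a single application of Proposition \ref{prop:Preservation_Semi_Convexity_by_Continuity_Method} for each admissible $\Theta$.
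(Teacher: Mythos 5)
Your proposal is correct and follows essentially the same route as the paper: unfold the degree-$>\delta$ hypothesis into strict $(S+\Theta,\omega_0)$-convexity for all admissible constant $\Theta$, apply Proposition \ref{prop:Preservation_Semi_Convexity_by_Continuity_Method} with $S$ replaced by $S+\Theta$ (noting Assumption \ref{assumption:CurveC2} is independent of $S$), and then pass through Lemma \ref{lemma:Equivalence Between Module of Convexity and Degree of Convexity} and Definition \ref{def:Module_SOmegaZeroConvexityC0_1117} to obtain the metric lower bound. This is exactly the argument of Section \ref{sec:metriclowerBound}.
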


%%%%%%%%%%%%%%%%%%%%%%%%%%%%%%%%%%%%%%
%%%%%%%%%%%%%%%%
%%%%%%%%%%%%%%%%%%%%%%%%%%%%%%%%%%%%%%
\section{Existence of Solutions to the Perturbed Equation}
            \label{sec:Existence}
        In this section, we prove the existence of smooth solutions to Problem 
        \ref{prob:NewPerturbation}.%:        Theorem         \ref{existence}. 

        In section \ref{sec:PhiX_Phi_XX_Equations}, we discuss some basic properties of 
        equation (\ref{eq:new_Perturbation_in_Inverse_Form_Section1}), including 
        ellipticity and concavity. Then  in section 
        \ref{sec:Torus_Partial_C2Estimates},  we derive a  directional partial $C^2$         estimate, in the direction of the affine manifold. The estimate allows us to derive  $C^0$ and $C^1$ estimates, in section 
        \ref{sec:C0C1}.  Then with the metric lower bound estimate, Proposition 
        \ref{prop:metric_lower_bound}, we prove $C^2$ and $C^{2,\alpha}$ estimates in 
        section \ref{sec:C2} and \ref{sec:C2alpha}. Finally, in section 
        \ref{sec:existence_by_method_continuity}, we prove the existence of smooth 
        solutions.

        %%%======================================
        
        \subsection{Basic Properties of the Elliptic Perturbation Equation}
        \label{sec:PhiX_Phi_XX_Equations}
        First,   equation 
        (\ref{eq:new_Perturbation_in_Inverse_Form_Section1}) is elliptic.  
        This has been indicated by (\ref{eq:D_theta_NewPerturbation}). More precisely, let $\Phi^\lambda$ be a family of solutions of equation (\ref{eq:new_Perturbation_in_Inverse_Form_Section1}) with $\Phi^0=\Phi$
        and $\frac{d}{d\lambda}\Phi^\lambda=\Psi$, at $\lambda=0$. Then differentiating 
        \begin{align}			\label{equation_lambda_family}
        	\Phi^\lambda_{\tautaubar}-\Phi^{\lambda}_{\tau\betabar} g^{{\alpha\betabar}}_\lambda\Phi^{\lambda}_{\taubar\alpha}
        	=\epsilon b_{{\alpha\betabar}} g^{{\alpha\betabar}}_\lambda
        \end{align}
    with respect to $\lambda$, at $\lambda=0$, gives
    \begin{align}
    	L^{\ijbar}\Psi_{\ijbar}=0,
    \end{align} 
        where $L^{\ijbar}$ was introduced in section \ref{sec:Equation} by (\ref{eq:L_introduced}). In (\ref{equation_lambda_family}), $g_\lambda^{{\alpha\betabar}}$ is the inverse of $b_{{\alpha\betabar}}+\Phi^\lambda_{{\alpha\betabar}}$.

        Then we consider the concavity. We will show 
        \begin{align}		
        	F(\Phi_{\ijbar})=\log(\Phi_{\tau\taubar}-\Phi_{\tau\betabar} g^{\abbar} 
        	\Phi_{\alpha\taubar})-\log(b_{\abbar}g^{\abbar}) \label{concave_Function}
        \end{align}is a concave function of $\Phi_{\ijbar}$, providing 
    \begin{align}
    	\left(
    \begin{array}{cc}
    	\Phi_{\tau\taubar}& \Phi_{\tau\betabar}\\
    	\Phi_{\alpha\taubar}& \Phi_{{\alpha\betabar}}+b_{\alpha\betabar}
    \end{array}
    \right)>0.
    \end{align}
    Actually, if we denote 
      \begin{align}\label{concave_Function_1}
     	F_1(\Phi_{\ijbar})&=\log(\Phi_{\tau\taubar}-\Phi_{\tau\betabar} g^{\abbar} 
     	\Phi_{\alpha\taubar}),\\
     	F_2(\Phi_{\ijbar})&=-\log(b_{\abbar}g^{\abbar}) \label{concave_Function_2},
     \end{align}
 then we can show $F_1$ and $F_2$ are both concave.
    These computations are in Appendix \ref{app:Concavity_lemma}.
    
        Suppose $\Phi$ is a $C^4$ solution to Problem \ref{prob:NewPerturbation} and 
        $X$ is a constant vector field in $\MR\times V$. We  apply $\partial_X$ to equation 
        (\ref{eq:new_Perturbation_in_Inverse_Form_Section1}) and get
        \begin{align}
        	\Phi_{X\tau\taubar}-\Phi_{X\betabar\tau}g^{\alpha\betabar}\Phi_{\alpha\taubar}
        	+\Phi_{\tau\betabar}g^{\alpha\mubar}\Phi_{X\rho\mubar}g^{\rho\betabar}\Phi_{\alpha\taubar}-\Phi_{\tau\betabar}
        	g^{\alpha\betabar}\Phi_{ X\alpha\taubar}=-\epsilon b_{\alpha\betabar 
        	}g^{\alpha\mubar}\Phi_{X\rho\mubar}g^{\rho\betabar}.
        	\label{eq:D_X_NewPerturbation}
        \end{align}
With the linearized operator $L^{\ijbar}\partial_{i\jbar}$, 
(\ref{eq:D_X_NewPerturbation}) is simplified to
    \begin{align}
    	L^{\ijbar}\partial_{i\jbar}(\Phi_X)=0.   \label{eq:DX_NewPerturbation_with_Lij} 
    \end{align}
Then apply $\partial_X$ to equation (\ref{eq:D_X_NewPerturbation}), we get
 \begin{align}
	L^{\ijbar}\partial_{i\jbar}(\Phi_{XX})\geq 0.   \label{eq:DXX_NewPerturbation_with_Lij} 
\end{align}
This is because of the concavity of (\ref{concave_Function}). We  can also  get
 (\ref{eq:DXX_NewPerturbation_with_Lij}) directly by replacing 
$\partial_{\theta}$ and 
$\partial_\gammabar$ in (\ref{eq:20221030-1})-(\ref{eq:20221030-4}) by 
$\partial_X$.
        %%%%%%%%%%%-----------------%%%-----------------%%%-----------------%%%%
        %%%%%-----------------%%%%%%%%%-------------------------------
            \subsection{Affine-Manifold-Directional $C^2$ Estimates}
            \label{sec:Torus_Partial_C2Estimates}
            Suppose $X$ is a constant real vector field in $\MR\times V$, parallel to $V$. 
            This is  to say,
            if we denote the projection from $\MR\times V$ to $\MR$ by $\pi_\MR$, then $(\pi_{\MR})_{\ast}(X)=0$.
             %in local product coordinate
            %\begin{align}X=X_1^\alpha\partial_{x^\alpha}+X_2^\alpha\partial_{y^\alpha}.\end{align}
           By equation (\ref{eq:DXX_NewPerturbation_with_Lij}), we know 
           \begin{align}
           	\Phi_{XX}\leq \max_{\partial \MR\times V} \Phi_{XX},\ \ \ \ \ \text{ in } \MR\times V.
           \end{align}
           Because $\omega_0$ has a lower bound and $\iu\ddbar F(\tau, 
           \ast)$ has a uniform upper bound,
     we can find a constant $C>0$, so 
           that
            \begin{align}
            \max_{\partial \MR\times V} \Phi_{XX}= \max_{\partial \MR\times V} 
            F_{XX}\leq C\omega_0(X, JX).
            \end{align}
        Therefore, 
        \begin{align}
        	\iu \ddbar\Phi(X, JX)=\frac{\Phi_{XX}+\Phi_{JX JX}}{2}\leq C \omega_0(X, JX).
        \end{align}
            This implies 
            \begin{align}
            	\omega_0+\iu\ddbar\Phi(\tau,\ast)\leq (1+C) \omega_0,\ \ \ \ \ \ \text{for 
            	any } \tau\in \partial\MR             
            	\end{align}
        and equivalently
        \begin{align}
        	(g_{\abbar})\leq (1+C) b_{\abbar}.
        \end{align}
    As a consequence, we have
    \begin{align}
    	b_{\abbar} g^{\abbar}\cdot \det (g_{\abbar})\leq n(1+C)^{n-1} \cdot 
    	\det(b_{\abbar}) .   \label{eq:right_hand_side_New_Perturbation_Upper_Bound}
    \end{align}
            
                    %%%%%%%%%%%-----------------%%%-----------------%%%-----------------%%%%
            %%%%%-----------------%%%%%%%%%-------------------------------
               \subsection{$C^0$ and $C^1$ Estimates}
               \label{sec:C0C1}
               To do the $C^0$ and boundary $C^1$ estimates, we construct $\Psi$ and 
               $\Phi^0$, so that
               \begin{align}
               	\Psi\leq \Phi\leq \Phi^0, \ \ \ \ \ \text{ in } \MR\times V,
               \end{align}
           and
           \begin{align}
           	\Psi=\Phi=\Phi^0, \ \ \ \ \ \text{ on } \partial\MR\times V.
           \end{align}
       
       We let $\Phi^0$ be the solution to Problem \ref{prob:HCMAproductSpace}, with 
       the boundary condition 
       \begin{align}
       	\Phi^0=\Phi,\ \ \ \ \ \ \text{ on } {\partial \MR\times V}.
       \end{align} We easily know that
       \begin{align}
       \Phi\leq \Phi^0, \ \ \ \ \ \text{ in }\MR\times V,
       \end{align} 
   because $\Phi_0$ is a maximal $\Omega_0$-PSH function.
   
   For the construction of $\Psi$, we need to use the estimate 
   (\ref{eq:right_hand_side_New_Perturbation_Upper_Bound}). Locally, we have
   \begin{align}
   	\det(h_{\ijbar})=\epsilon b_{\abbar} g^{\abbar}\cdot \det (g_{\abbar})\leq \epsilon
   	n(1+C)^{n-1} \cdot 
   	\det(b_{\abbar}). \label{418} 
   \end{align}
So, for a solution $\Psi$ to Problem \ref{prob:epsilonCMAproductSpace}, with 
$\varepsilon=\epsilon
n(1+C)^{n-1}$, we have
\begin{align}
	(\OmegaPhi)^{n+1}\leq (\Omega_0+\iu\ddbar\Psi)^{n+1}.
\end{align}
Thus $\Psi\leq \Phi$, given the boundary condition $\Psi=\Phi$ on ${\partial \MR\times 
V}$. The global $C^0$ and $C^1$ estimates for $\Psi$ are known by \cite{Jianchun} 
and \cite{Blocki}. Therefore, we have the global $C^0$ estimate and boundary $C^1$ estimate 
for $\Phi$. 

The $C^{1}$ interior estimate can be derived from boundary estimates with equation 
(\ref{eq:DX_NewPerturbation_with_Lij}).
                       %%%%%%%%%%%-----------------%%%-----------------%%%-----------------%%%%
               %%%%%-----------------%%%%%%%%%-------------------------------
                  \subsection{$C^2$ Estimates}
                  \label{sec:C2}
                  
                 % The estimate in this section will depend on $\epsilon$ and the metric lower bound estimate, Proposition \ref{prop:metric_lower_bound}. 
                 
                 We first prove the boundary 
                  $C^2$ estimate, then we use equation 
                  (\ref{eq:DXX_NewPerturbation_with_Lij}) to derive the interior estimate.
                  
                  To do the boundary estimate, we need to flatten the boundary. Around 
                  $\tau_0\in\partial \MR$, find a holomorphic map
                  \begin{align}
                  	f: B_{\delta'}(\tau_0)\cap \overline{\MR}\rightarrow \EC=\{\zeta=\xi+\iu \eta\},
                  \end{align}
              for a small $\delta'$. We want that $f'\neq 0$,  $f(\tau_0)=0,$
              \begin{align}
              	f(\partial\MR) \subset \left\{\zeta|\Imm(\zeta)=0 \right\}
              \end{align}
          and
          \begin{align}
          	f(B_{\delta'}(\tau_0)\cap \overline{\MR})\supset 
          	B_\delta^+(0)=\{|\Ree(\zeta)|<\delta, 0\leq \Imm(\zeta)<\delta\},
          \end{align}for a small $\delta$.
          For a point $p_0\in V$, let $\{z^\alpha\}$ be a set of coordinates in 
          $ B_r(p_0)\subset V$, for a small $r$. Without loss of  generality, we can assume that $p_0=0$ in   this coordinate chart. We also require that 
           the coordinate $z^\alpha$ is properly chosen 
          so that  the   natural 
          metric on $ B_r(0)$, as a subset of $\EC^n$, is the metric 
          $\omega_0$. In the following, we will work in the  coordinate chart $ B^+_{\delta}(0)\times B_r(0)$ and estimate second order derivatives at 
          	$(0,0)$. For convenience, we denote $B^+_{\delta}(0)\times B_r(0)$ by $\mathcal{D}$ and denote $\{\Imm(\zeta)=0\}\times B_r(0)$ by $\Gamma$.
          
          In the coordinate chart $ \mathcal{D}$, 
           $\Phi(\zeta,  \vec  z)$ 
          satisfies
          \begin{align}\label{eq:new_Perturbation_after_coordinate_Trans_At_Boundary}
          	\Phi_{ \zeta\zetabar}-\Phi_{\zeta\betabar}g^{\abbar} \Phi_{\alpha\zetabar}=
          	\epsilon\cdot k(\zeta)\cdot b_{\abbar} g^{\abbar}.
          %	\ \ \ \ \ \text{ in } B^+_{\delta}(0)\times V.
          \end{align}
           Here $k=\frac{1}{|f'|^2}$, so it is a positive and smooth function  $\zeta$.  
      \begin{align}
      	\frac{1}{K}<k< K,
      \end{align}for a constant $K$.  
  
  The second order derivatives of $\Phi$ at $(0,0)$ in $\Gamma$ directions are known, they depend on the boundary value $F$.  
      Let $X$ be a constant vector field parallel to $\Gamma$ and with $|X|=1$. We need to estimate $\Phi_{X\eta}$, then using equation we can control $\Phi_{\eta\eta}$.   
      
      The method of estimating $\Phi_{X\eta}$ is similar to the method used in   \cite{GuanComplex}. According to the estimate of section \ref{sec:C0C1}, there is a constant $C_1$, so that 
      \begin{align}
      	|\Phi_X|\leq C_1.
      \end{align}
  In the following, we will show $\Phi_{X\eta}(0,0)\leq C_2$, for a constant $C_2$. 
  
  First, we need to derive an equation satisfied by $\Phi_{X}$. Applying $\partial_X$ to equation (\ref{eq:new_Perturbation_after_coordinate_Trans_At_Boundary}) gives
   \begin{align}
 	\Phi_{X\zeta\zetabar}-\Phi_{X\betabar\zeta}g^{\alpha\betabar}\Phi_{\alpha\zetabar}
 	+\Phi_{\zeta\betabar}g^{\alpha\mubar}\Phi_{X\rho\mubar}g^{\rho\betabar}\Phi_{\alpha\zetabar}-&\Phi_{\zeta\betabar}
 	g^{\alpha\betabar}\Phi_{ X\alpha\zetabar}\\
 	&=-\epsilon  k b_{\alpha\betabar 
 	}g^{\alpha\mubar}\Phi_{X\rho\mubar}g^{\rho\betabar}+\epsilon(\partial_Xk) b_{\alpha\betabar 
 }g^{\alpha\betabar}.
 	\label{eq:D_X_NewPerturbation_after_Coordinate_Change_boundary}
 \end{align}
We introduce the following operator $\ML$, which is a scalar function multiple of $L$, after coordinate transformation. Equivalently, $\ML$ can also be considered as the linearization operator of (\ref{eq:new_Perturbation_after_coordinate_Trans_At_Boundary}):
\begin{align}
	\ML=\ML^{\ijbar}\partial_{\ijbar},
\end{align}                         
with
\begin{align}
	\left(\ML^{\ijbar}\right)=\left(
	\begin{array}{cc}
		\ML^{0\overline 0}&\ML^{0\betabar}\\
		\ML^{\alphabar 0}&\ML^{\alpha\betabar}
	\end{array}\right)
=
\left(
\begin{array}{cc}
	1&-\Phi_{\rho\zetabar} g^{\rho\betabar}\\
-\Phi_{\zeta\mubar} g^{\alpha\mubar}&\epsilon k b_{\mu\rhobar} g^{\mu\betabar} g^{\alpha\rhobar}+\Phi_{\zeta\mubar} g^{\alpha\mubar}\Phi_{\rho\zetabar} g^{\rho\betabar}
\end{array}\right).
\end{align}Here $i,j$ run from $0$ to $n$, and the $0$-th coordinate is $\zeta$.
With this operator, equation (\ref{eq:D_X_NewPerturbation_after_Coordinate_Change_boundary}) becomes
\begin{align}
	\ML^{i\overline{j}}\partial_{i\overline{j}}(\Phi_{X})=\epsilon (\partial_Xk) b_{{\alpha\betabar}} g^{{\alpha\betabar}}.
	\label{eq:DX_newPerturbation_after_coordinate_Trans_at_Boundary_with_L}
\end{align}
Since we have the metric lower bound estimate, Proposition \ref{prop:metric_lower_bound}, we know the right-hand side of (\ref{eq:DX_newPerturbation_after_coordinate_Trans_at_Boundary_with_L}) is bounded.  We can assume, for a constant $\tilde{C}$, 
\begin{align}
	-\tilde{C} \leq \ML^{i\overline{j}}\partial_{i\overline{j}}(\Phi_{X}).
\end{align}

Then we construct a barrier function $u$, so that $u\geq \Phi_{X}$ and $u(0,0)=\Phi_X(0,0)$. Thus we can get an upper bound for $\Phi_{X\eta}$. The barrier function is
\begin{align}
	u=l+C_3\left(|z^\alpha|^2+\xi^2\right)+C_4\eta-C_5\eta^2+C_6(\Phi-\Psi).
\end{align} In above, $l$ is the $\Gamma$-directional linearization of $\Phi_{X}$ at $(0,0)$. That's to say, 
\begin{align}
	l(0,0)&=\Phi_{X}(0,0),\\
	\partial_\eta l&=0
\end{align}
and 
\begin{align}
	\nabla_\Gamma l(0,0)=\nabla_\Gamma\Phi_{X}(0,0).
\end{align}
$\Psi$ is a solution to Problem \ref{prob:epsilonCMAproductSpace} with $\varepsilon=\epsilon n (1+C)^{n-1}+1$ and 
\begin{align}
	\Psi=\Phi,\ \ \ \ \ \ \text{ on } {\partial \MR\times V}.
\end{align} The $\Psi$ constructed here is even smaller than the $\Psi$ constructed in section (\ref{sec:C0C1}), so $\Psi\leq \Phi$. And according to the $C^2$ estimate for $\Psi$ \cite{Blocki}, we have
\begin{align}
	\left(
	\begin{array}{cc}
		\Psi_{\zeta\zetabar}& \Psi_{\zeta\betabar}\\
		\Psi_{\alpha\zetabar}& \Psi_{{\alpha\betabar}}+b_{\alpha\betabar}
	\end{array}
	\right)>\frac{1}{C_7}	\left(
	\begin{array}{cc}
	1&         \\
	              &b_{\alpha\betabar}
	\end{array}
	\right).\label{eq:psi_lowerbound}
\end{align} 
%Here $C_7$ may be a large number, it depends on $\epsilon$.

In the following, we show that we can properly choose parameters $C_3, C_4, C_5, C_6$, so that 
\begin{align}
	\ML^{i\overline{j}}\partial_{i\overline{j}}u\leq- \tilde{C},\ \ \ \ \ \text{ in }\Omega,
\end{align} 
and 
\begin{align}
	u\geq \Phi_{X},\ \ \ \ \ \ \text{ on }\partial\Omega.
\end{align} 
Then the comparison principle implies that $u\geq \Phi_X$ in $\Omega$.

We compute and estimate $\ML^{i\overline{j}}\partial_{i\overline{j}}u$ term by term:

(i) $\ML^{i\overline{j}}\partial_{i\overline{j}} (l+C_4 \eta)=0$, because $l+C_4\eta$ is a linear function.

(ii)Using the metric lower bound estimate, Proposition \ref{prop:metric_lower_bound}, we have \begin{align}
	\ML^{i\overline{j}}\partial_{i\overline{j}}(|z^\alpha|^2+\xi^2)=\epsilon k b_{\theta\gammabar} g^{\theta\alphabar} g^{\alpha\gammabar}
	  +g^{\theta\alphabar}\Phi_{\theta\zetabar}g^{\alpha\gammabar} \Phi_{\zeta\gammabar} +\frac{1}{2}\leq C_8 \epsilon K+g^{\theta\alphabar}\Phi_{\theta\zetabar}g^{\alpha\gammabar} \Phi_{\zeta\gammabar}.  \label{20221109_440}
\end{align}
Here $C_8$ depends on the  metric lower bound.

(iii) $\ML^{i\overline{j}}\partial_{i\overline{j}}(-\eta^2)=-\frac{1}{2}.$

(iv) For $\ML^{i\overline{j}}\partial_{i\overline{j}}(\Phi-\Psi)$, we split it into two terms:
 $	\ML^{i\overline{j}}\partial_{i\overline{j}}(\Phi)
 +\ML^{{\alpha\betabar}}b_{{\alpha\betabar}}$ and 
 $-(	\ML^{i\overline{j}}\partial_{i\overline{j}}(\Psi)
 +\ML^{{\alpha\betabar}}b_{{\alpha\betabar}})$.
Using $b_{\alpha\betabar}+\Phi_{\alpha\betabar}=g_{\alpha\betabar}$, we get
\begin{align}
	\ML^{i\overline{j}}\partial_{i\overline{j}}(\Phi)
	+\ML^{{\alpha\betabar}}b_{{\alpha\betabar}}=2\epsilon k b_{\alpha\betabar} g^{\alpha\betabar}. \label{20221112441}
\end{align}
For the right-hand side of (\ref{20221112441}), we use the metric lower bound estimate and get
\begin{align}
	2\epsilon k b_{\alpha\betabar} g^{\alpha\betabar}\leq 2 \epsilon K C_9.
\end{align}
For $\ML^{i\overline{j}}\partial_{i\overline{j}}(\Psi)
+\ML^{{\alpha\betabar}}b_{{\alpha\betabar}}$, we  use (\ref{eq:psi_lowerbound}) and get
\begin{align}
	\ML^{i\overline{j}}\partial_{i\overline{j}}(\Psi)
	+\ML^{{\alpha\betabar}}b_{{\alpha\betabar}}\geq \frac{1}{C_7}
	\left(g^{\alpha\etabar}\Phi_{\zeta\etabar} g^{\mu\betabar} \Phi_{\mu\zetabar}b_{\alpha\betabar}\right).
\end{align}
Note that we have already made the assumption, when choosing the coordinate chart, that, in $\mathcal{D}$, $b_{{\alpha\betabar}}=\delta_{\alpha\betabar}$. So
\begin{align}
	\ML^{i\overline{j}}\partial_{i\overline{j}}(\Psi)
	+\ML^{{\alpha\betabar}}b_{{\alpha\betabar}}\geq \frac{1}{C_7}
	\left(g^{\alpha\etabar}\Phi_{\zeta\etabar} g^{\mu\alphabar} \Phi_{\mu\zetabar}\right).
	\label{20221109_444}
\end{align}
The right-hand side of (\ref{20221109_444}) can be used to control the right-hand side of (\ref{20221109_440}).

In sum, we have
\begin{align}
	\ML^{i\overline{j}}\partial_{i\overline{j}} u
	\leq (C_3\cdot \epsilon K C_8+C_6\cdot 2\epsilon K C_9-\frac{C_5}{2})+
	   g^{\alpha\etabar}\Phi_{\zeta\etabar} g^{\mu\alphabar} \Phi_{\mu\zetabar}
	   (C_3-\frac{C_6}{C_7}).
\end{align}
We need to choose $C_3, C_5, C_6$ so that 
\begin{align}
	C_6\geq C_3\cdot C_7\label{conditionC6}
\end{align}
and
\begin{align}
	C_5\geq 2(C_3\cdot 2 \epsilon K C_8+C_6\cdot 2\epsilon K C_9+\tilde{C}). \label{conditionC5}
\end{align}

We also want $u\geq \Phi_{X}$ on $\partial\Omega$. We need to choose $C_3$ big enough, so that
\begin{align}
	C_3(\sum_\alpha|z^\alpha|^2+\xi^2)+l\geq \Phi_X+(\sum_\alpha|z^\alpha|^2+\xi^2), \ \ \ \ \text{ on } \Gamma.
	\label{20221109448}
\end{align} 
This requires 
\begin{align}
	C_3 \geq  \max_{\Gamma}|D^2(\Phi_{X}|_\Gamma)|+1. \label{conditionC3}
\end{align}
We note that 
\begin{align}
	C_4\eta-C_5\eta^2+C_6(\Phi-\Psi)=0, \ \ \ \ \ \ \text{ on }\Gamma,
\end{align}
so $u\geq \Phi_{X}$ on $\Gamma$, given (\ref{20221109448}) is valid.
To make $u\geq \Phi_{X}$ on $\partial \Omega-\Gamma$, we choose $C_4$ big enough. Given (\ref{20221109448}), we have 
\begin{align}
	u\geq \Phi_X+\delta_1, \ \ \ \ \ \ \text{ on }\partial\Gamma,
\end{align}
for a small positive constant $\delta_1$. Then for a small $\delta_2\in(0,r)$,  
\begin{align}
	l+C_3\left(|z^\alpha|^2+\xi^2\right)-C_5\eta^2+C_6(\Phi-\Psi)\geq \Phi_X,
	\ \ \ \ \ \  \text{ on }\{\eta\leq\delta_2\}\cap \partial \Omega.
\end{align}
$\delta_2$ depends on $\delta_1, C_5, C_6$,  second order derivatives of $F$ and the norms of gradients of $\Psi, \Phi$. We also have
 \begin{align}
	l+C_3\left(|z^\alpha|^2+\xi^2\right)-C_5\eta^2+C_6(\Phi-\Psi)>\Phi_{X}-C_{10},\ \ \ \ \ \ \text{ in }\Omega,
\end{align}
for a constant $C_{10}$,  depending on $C_5, C_6$, second order derivatives of $F$, the $C^1$ norm of $\Phi$ and the $C^0$ norm of $\Psi$. We can choose
\begin{align}
	C_4>\frac{C_{10}}{\delta_2}. \label{conditionC4}
\end{align}
Then $u\geq \Phi_X$ on $\partial \MD$.

In sum, we choose $C_3$ large enough with condition (\ref{conditionC3}), then choose $C_6$ with condition (\ref{conditionC6}), then choose $C_5$ with condition (\ref{conditionC5}) and finally choose $C_4$ according to condition (\ref{conditionC4}).

Thus, we have an upper bound for $\Phi_{X\eta}$. To get the lower bound, we simply replace $\partial_X\Phi$ by $\partial_{-X}\Phi$.

Then we can use equation (\ref{eq:new_Perturbation_after_coordinate_Trans_At_Boundary}) to get the estimate of $\Phi_{\eta\eta}$. Just note that $4\Phi_{\zeta\zetabar}=\Phi_{\xi\xi}+\Phi_{\eta\eta}$. So
\begin{align}
	\Phi_{\eta\eta}=-\Phi_{\xi\xi}+4\Phi_{\zeta\betabar}g^{\abbar} \Phi_{\alpha\zetabar}+4
	\epsilon\cdot k(\zeta)\cdot b_{\abbar} g^{\abbar}.
	\label{20221109455}
\end{align}
The estimate of the right-hand side of (\ref{20221109455}) depends on the boundary value $F$, the metric lower bound estimate and the estimate of $\Phi_{X\eta}$.
 %%%%%%%%%%%-----------------%%%-----------------%%%-----------------%%%%
                  %%%%%-----------------%%%%%%%%%-------------------------------%%%%------

                  Given the boundary estimate, we can go back to the original $(\tau, \vec z)$ coordinates and use equation (\ref{eq:DXX_NewPerturbation_with_Lij}). Similar to section \ref{sec:Torus_Partial_C2Estimates}, for any constant vector $X$ in $\MR\times V$, we have 
                  \begin{align}
                  	\Phi_{XX}\leq \max_{\partial \MR\times V} \Phi_{XX}, \ \ \ \ \ \ \text{ in } \MR\times V.
                  \end{align} %because of                    (\ref{eq:DXX_NewPerturbation_with_Lij}). 
              For the lower bound estimate of $\Phi_{XX}$, we have
                   \begin{align}
                   	-\Phi_{XX}&=\Phi_{JX JX}-2\iu\ddbar\Phi(X, JX)\\
                   	&=\Phi_{JX JX}-2(\Omega_0+\iu\ddbar\Phi)(X, JX)+2\Omega_0(X, JX).
                   \end{align}
                   Then, using $\OmegaPhi\geq0$, we get the lower bound of $\Phi_{XX}$.
                   
                   In sum, we get
                   \begin{align}
                   	|\Phi|_{C^2(\overline{\MR}\times V)}\leq C_{11},
                   \end{align}
                   for a constant $C_{11}$, which depends on $\epsilon$, $|F|_{C^3(\overline{\MR}\times V)}$, $\omega_0$, the metric lower bound estimate and the boundary of $\MR$. In particular,  when $\epsilon\rightarrow 0$, the constant $C_{11}$ does not go to $\infty$. However, we don't need to use this fact.
                 %%%%------------------------------- 
                 %%%%-------------------------%%%%-------------------------------
                 %%%%-------------------------------%%%%------------------
   \subsection{$C^{2,\alpha}$ Estimates}
       \label{sec:C2alpha}
                     With the $C^2$ estimate in the previous section, we know the operator $\ML$ and $L$ are uniform elliptic.  We only need to prove the boundary $C^{2,\alpha}$ estimate. Then, with the uniform ellipticity, concavity, $C^2$ estimate and the boundary $C^{2,\alpha}$ estimate, we can derive the interior $C^{2, \alpha}$ estimate with standard method \cite{QingHanBook}.
                     
                     For the boundary $C^{2,\alpha}$ estimate we need to flatten the boundary again. Adopting notation of section \ref{sec:C2}, we know that $\Phi$ satisfies the equation
                     \begin{align}
                     	\ML^{i\overline{j}}\partial_{i\overline{j}}(\Phi_{X})=\epsilon (\partial_Xk) b_{{\alpha\betabar}} g^{{\alpha\betabar}}
                     	\ \ \ \ \ \ \text{  in $\MD$}.
                     	\label{eq:DX_newPerturbation_after_coordinate_Trans_at_Boundary_with_L_secondtime}
                     \end{align}
              We construct a function
                 \begin{align}
                 	\MF(\zeta, \vec z)=\partial_X F(\Ree(\zeta), \vec z).
                 \end{align}
             Then
             \begin{align}
             	\Phi_X-F&=0,&\text{ on }\Gamma;\\
             	\ML^{i\overline{j}}\partial_{i\overline{j}}(\Phi_X-F)&=\epsilon (\partial_Xk) b_{{\alpha\betabar}} g^{{\alpha\betabar}}-\ML^{i\overline{j}}\partial_{i\overline{j}} \MF,  &\text{ in }\MD.\label{1109459}
             \end{align}
         The right-hand side of (\ref{1109459}) is bounded, according to the $C^2$ estimate in the previous section, so
              we can use Theorem  1.2.16 of \cite{QingHanBook} and get the $C^\alpha$ estimate for $\partial_\eta(\Phi_X-\MF)=\Phi_{X\eta}$ in a small neighborhood of $0$ in $\Gamma$, for an $\alpha\in(0,1)$. Using equation (\ref{20221109455}), we get the $C^\alpha$ estimate for $\Phi_{\eta\eta}$. 
                 
           %      Since $\ML^{i\overline{j}}\partial_{i\overline{j}}$ is uniform elliptic, we can apply 
                     
                             %%%%%%%%%%%-----------------%%%-----------------%%%-----------------%%%%
                     %%%%%-----------------%%%%%%%%%-------------------------------
 \subsection{Existence of Smooth Solutions by the Method of Continuity}
 \label{sec:existence_by_method_continuity}
 Suppose $F\in C^{\infty}({\partial \MR\times V})$ satisfies condition (\ref{20221107BoundarySOmega_0Convexity}), for a constant section $F$ of $T_{2,0}^\ast(V)$. Then according to Lemma \ref{lemma:monotone}, 
 \begin{align}
 	\sigma F(\tau, \ast) \text{ is strictly \SO-convex for any $\tau\in \MR$ and any $\sigma\in[0,1]$ }.
 \end{align}
                        Consider the set 
                        \begin{align}
                        \mathscr{S}=	\{\sigma\in[0,1]\big|& \text{Problem 
                        \ref{prob:NewPerturbation} with 
                        	boundary value $s\cdot F$ has a solution $\Phi^s$,  for any 
                        	$s\leq 
                        	\sigma$, } \\
                        &\ \ \ \text{and $\Phi^s$ is a continuous curve in $C^4(\overline{\MR}\times V)$ with $C^2$ topology}\} .
                        \end{align}
                    Obviously, $\mathscr{S}$ is non-empty, since it contains $0$. So, if $\mathscr{S}$ is both open and closed, then $\mathscr{S}=[0,1]$
                    
                    Before we prove the openness and closeness, we point out that if a solution $\Phi$ is in $C^{2,\alpha}(\overline{\MR}\times V)$ then we can use the standard bootstrap technique (Theorem 5.1.9 and 5.1.10 of \cite{QingHanBook})) to show that $\Phi$ is actually in $C^{\infty}(\overline{\MR}\times V)$. This is because of the condition (\ref{eq:Problem_NewPerturbation_NonDegenerateCondition}) and the ellipticity of the equation (\ref{eq:new_Perturbation_in_Form_Form_Section1}). 
                        %   \subsection{Uniqueness}
                        
                        The openness can be proved with standard implicit function theorem.  This is because of the condition (\ref{eq:Problem_NewPerturbation_NonDegenerateCondition}) and the ellipticity. Without such condition, the openness can be quite difficult to prove.  For example, in \cite{CFH}, we used Nash-Moser inverse function theorem to prove an openness result for geodesic equations.
                        
                        For the closeness, given  $\{\sigma_i\}_{i\in\EZ^+}\subset \mathscr{S}$ with
                        $\lim_{i\rightarrow\infty}\sigma_i=\sigma_{\infty}$, we need to show that $\sigma_\infty\in \mathscr{S}$. 
                          According to the $C^{2,\alpha}$ estimate, the sequence of solutions $\Phi^{\sigma_i}$ satisfy
                          \begin{align}
                                 |\Phi^{\sigma_i}|_{C^{2,\alpha}}\leq C
                          \end{align}
                      and 
                      \begin{align}
                      	\Omega_0+\iu\ddbar \Phi^{\sigma_i}\geq \frac{1}{C} (\Omega_0+\sqrt{-1} d\tau\wedge \overline{d\tau}),
                      \end{align}
                  for a constant $C$, which depends on $\epsilon$. It's easy to know that $\Phi^{\sigma_i}$ is a Cauchy sequence in $C^0(\overline{\MR}\times V)$. So, using interpolation, we can find $ \Phi^{\sigma_\infty} \in C^{2,\frac{\alpha}{2}}(\overline{\MR}\times V)$ and
                  \begin{align}
                  	\Phi^{\sigma_i}\rightarrow \Phi^{\sigma_\infty}, \ \ \ \ \ \ \ \   \text{ in $C^{2,\frac{\alpha}{2}}$ norm.}
                  \end{align}
              The $C^{2,\frac{\alpha}{2}}$ convergence  implies $\Phi^{\sigma_\infty}$ satisfies equation (\ref{eq:new_Perturbation_in_Form_Form_Section1}) and  condition (\ref{eq:Problem_NewPerturbation_NonDegenerateCondition}). Therefore, $\Phi^{\sigma_\infty}$ is a solution to Problem \ref{prob:NewPerturbation}, with boundary value ${\sigma_\infty}  F$.
              
              It remains to show that, for any $\lambda\in [0,\sigma_{\infty})$, there is a solution $\Phi^{\lambda}$ and $\{\Phi^{\lambda}|\lambda\in[0,\sigma_{\infty}]\}$ is a continuous curve with $C^2$ topology.
              
              In the following, when we say solution $\Phi^\theta$, we always mean a solution to Problem \ref{prob:NewPerturbation} with boundary value $\theta F$.
              For any $\nu<\sigma_{\infty}$, let $\delta_\nu=\frac{\sigma_{\infty}-\nu}{2}$.   There is a $\sigma_k>\nu+\delta$ because $\sigma_k\rightarrow \sigma_\infty$. $\sigma_k$ being in $ \mathscr{S}$ implies solution $\Phi^{\nu}$ with boundary value $\nu  F$ exists and 
  $
              	\{\Phi^s| s\in[0, \nu+\delta_\nu]\}
$ is a continuous curve with $C^2$ topology. So
\begin{align}
	\{\Phi^{\lambda}|\lambda\in[0,\sigma_{\infty})\}
	=\bigcup_{\nu<\sigma_\infty}\{\Phi^{\lambda}|\lambda\in[0,\nu+\delta_\nu)\}
\end{align} is $C^2$ continuous everywhere.  Therefore,  for any sequence $\nu_k\rightarrow \sigma_\infty$, solution $\Phi^{\nu_k}$ has uniform $C^{2,\alpha}$ estimate. It's easy to know $\Phi^{\nu_k}$ converges to $\Phi^{\sigma_\infty}$ in $C^0$ norm, so, by interpolation, we know $\Phi^{\nu_k}$ converges to $\Phi^{\sigma_\infty}$ in $C^2$ norm and the curve   $\{\Phi^{\lambda}|\lambda\in[0,\sigma_{\infty}]\}$ is $C^2$ continuous everywhere.  

In sum, we have the following.
\begin{theorem}[Existence of Smooth Solutions to Problem \ref{prob:NewPerturbation} and Convexity Estimates]
	\label{existence}
Suppose that, for a constant $\delta>0$ and a constant section $S$ of $T_{2,0}^{\ast}(V)$, $F\in C^{\infty}({\partial \MR\times V})$ satisfies
\begin{align}
	F(\tau, \ast) \text{ is \SO-convex of modulus $>\delta$,  for any $\tau\in\partial \MR$}.
\end{align}
Then  Problem \ref{prob:NewPerturbation} with boundary value $F$ has a unique and smooth solution $\Phi$. In addition, 
\begin{align}
	\Phi(\tau, \ast) \text{ is \SO-convex of modulus $>\delta$,  for any $\tau\in\MR$},
\end{align}
and, consequently, 
	\begin{align}
	\omega_0+\iu\ddbar	\Phi(\tau, \ast) > \delta\omega_0 \text{, \ \ \ \ \ \  for any $\tau\in \MR$}. 
\end{align}
\end{theorem}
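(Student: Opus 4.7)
The plan is to solve the problem by the method of continuity, deforming the boundary data from $0$ to $F$ along the family $\{\sigma F\}_{\sigma\in[0,1]}$, and to extract the convexity conclusion as a byproduct of the a priori estimates. First I would define
\[
\mathscr{S}=\bigl\{\sigma\in[0,1]\ \big|\ \text{Problem \ref{prob:NewPerturbation} with boundary value $sF$ is solvable in }C^4(\overline{\MR}\times V)\text{ for all }s\leq\sigma,
\]
\[
\text{ and } s\mapsto \Phi^s \text{ is a continuous curve in }C^4 \text{ with }C^2 \text{ topology}\bigr\}.
\]
Since $\Phi^0\equiv 0$ solves the problem when $F=0$, the set $\mathscr{S}$ is non-empty, and it suffices to show it is both open and closed in $[0,1]$. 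By Lemma \ref{lemma:monotone}, the rescaled boundary data $\sigma F$ remains \SO-convex of modulus $>\delta$ for every $\sigma\in[0,1]$, so Proposition \ref{prop:metric_lower_bound} (whose apriori hypothesis, Assumption \ref{assumption:CurveC2}, is precisely the defining property of $\mathscr{S}$) will supply the crucial uniform metric lower bound $\omega_0+\iu\ddbar \Phi^\sigma(\tau,\ast)>\delta\omega_0$ along the entire path.

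The openness of $\mathscr{S}$ at some $\sigma_0$ would follow from the standard implicit function theorem, once I observe that the linearization of equation (\ref{eq:new_Perturbation_in_Inverse_Form_Section1}) at $\Phi^{\sigma_0}$ is $L^{\ijbar}\partial_{\ijbar}$, which is uniformly elliptic thanks to condition (\ref{eq:Problem_NewPerturbation_NonDegenerateCondition}) and the metric lower bound from Proposition \ref{prop:metric_lower_bound}. The linearized Dirichlet problem is then uniquely solvable in $C^{2,\alpha}$, and the inverse function theorem gives solvability for all $\sigma$ near $\sigma_0$, with the resulting family depending continuously on $\sigma$ in $C^2$ because the estimates are uniform.

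Closedness is the technical heart of the argument and relies on the a priori estimates of Sections \ref{sec:PhiX_Phi_XX_Equations}--\ref{sec:C2alpha}. For a sequence $\sigma_i\to\sigma_\infty$ in $\mathscr{S}$, I would combine the $C^0$ estimate (via the barrier constructed from the solution $\Psi$ of Problem \ref{prob:epsilonCMAproductSpace} bounding $\Phi^{\sigma_i}$ from below, together with the $\Omega_0$-PSH maximal envelope bounding it from above), the $C^1$ interior estimate from (\ref{eq:DX_NewPerturbation_with_Lij}), the $C^2$ estimate obtained by the barrier argument at the boundary and by (\ref{eq:DXX_NewPerturbation_with_Lij}) in the interior, and finally the $C^{2,\alpha}$ estimate coming from flattening the boundary and applying the interior Schauder theory after invoking concavity of $F$ in (\ref{concave_Function}). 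All of these estimates are uniform in $i$ provided the operator $L^{\ijbar}\partial_{\ijbar}$ stays uniformly elliptic; this uniformity is exactly what Proposition \ref{prop:metric_lower_bound} guarantees along the continuity path. By Arzel\`a--Ascoli and interpolation, a subsequence $\Phi^{\sigma_i}$ converges in $C^{2,\alpha/2}$ to some $\Phi^{\sigma_\infty}\in C^{2,\alpha/2}$ satisfying both equation (\ref{eq:new_Perturbation_in_Form_Form_Section1}) and the positivity condition (\ref{eq:Problem_NewPerturbation_NonDegenerateCondition}); the standard bootstrap (Theorems 5.1.9 and 5.1.10 of \cite{QingHanBook}) then lifts it to $C^\infty$. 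To close up the continuity in $C^2$ along $[0,\sigma_\infty]$, I would use openness at each $\nu<\sigma_\infty$ (picking a $\sigma_k>\nu+\delta_\nu$) to represent $\{\Phi^\lambda\mid \lambda\in[0,\sigma_\infty)\}$ as a union of locally continuous arcs, and then pass to the limit at $\sigma_\infty$ via interpolation between $C^0$ convergence and uniform $C^{2,\alpha}$ bounds.

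The main obstacle will be the $C^2$ estimate: the lower-order estimates and $C^{2,\alpha}$ step are standard once we have $C^2$, and openness is formal, but the boundary $C^2$ bound requires an explicit barrier $u=l+C_3(|z|^2+\xi^2)+C_4\eta-C_5\eta^2+C_6(\Phi-\Psi)$ whose construction fails without a uniform lower bound on $\omega_0+\iu\ddbar\Phi(\tau,\ast)$. Thus the convexity-preservation result of Proposition \ref{prop:metric_lower_bound} is not a mere bonus but the engine driving the whole existence argument: strict \SO-convexity of the boundary data propagates into the interior, and only then can the nonlinear equation be treated as a uniformly elliptic concave equation amenable to the continuity method. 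Once existence in $C^\infty$ is established, the convexity conclusion of the theorem is an immediate reapplication of Proposition \ref{prop:metric_lower_bound} to the (now unconditionally available) solution $\Phi$, and uniqueness follows from the ellipticity of the linearization via the maximum principle.
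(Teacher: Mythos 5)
Your proposal is correct and follows essentially the same route as the paper: a continuity path along $\sigma F$ with the set $\mathscr{S}$ defined exactly as in Section \ref{sec:existence_by_method_continuity}, openness by the implicit function theorem, closedness via the $C^0$--$C^{2,\alpha}$ estimates whose uniform ellipticity is supplied by Proposition \ref{prop:metric_lower_bound} (with Assumption \ref{assumption:CurveC2} built into $\mathscr{S}$), bootstrap to smoothness, and the convexity conclusion by reapplying that proposition. The only cosmetic imprecision is that the scaled data $\sigma F$ is strictly $(\sigma S,\omega_0)$-convex rather than $(S,\omega_0)$-convex of modulus $>\delta$, but this matches how the paper itself handles the scaling via Lemma \ref{lemma:monotone} and does not affect the argument.
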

                        %%%%%%%%%%%%%%%%%%%%%%%%%%%%%%%%%%%%%%
%%%%%%%%%%%%%%%%
%%%%%%%%%%%%%%%%%%%%%%%%%%%%%%%%%%%%%%
\section{Estimates For Homogenous Monge-Amp\`ere Equations}
            \label{sec:estimates_No_Assumption}
            In this section, we prove estimates for solutions to Problem \ref{prob:HCMAproductSpace} and Problem \ref{prob:geodesicMongeAmpereonStrip}. Solutions we talk about in this section all have the same boundary value $F$ which satisfies condition (\ref{condition:convex_module_theorem}).
            
            Suppose $\Phi^{\epsilon}$ is the solution to Problem \ref{prob:NewPerturbation} and $\Phi^0$ is the solution to Problem \ref{prob:HCMAproductSpace}. We will show that $\Phi^{\epsilon}$ converges to $\Phi^0$ in $C^0$ norm and $\Phi^{0}$ satisfies estimates, which are satisfied by $\Phi^\epsilon$.
            
            Let $\Psi^{\epsilon}$ be the solution to Problem \ref{prob:epsilonCMAproductSpace} with $\varepsilon=\epsilon n (1+C)^{n-1}$, where the constant $C$ is from (\ref{418}). Then we know 
            \begin{align}
            	\Psi^{\epsilon}\leq \Phi^{\epsilon} \leq \Phi^{0}, \ \ \ \ \ \ \text{ in }{ \MR\times V}.
            \end{align}
        According to estimates, in \cite{Chen2000} or \cite{Blocki},  for solutions to Problem \ref{prob:epsilonCMAproductSpace},  $\Psi^{\epsilon}\rightarrow \Phi^{0}$ in $C^0$ norm, so $\Phi^{\epsilon} \rightarrow \Phi^{0}$ in $C^0$ norm. In particular, for any $\tau\in \MR$, 
        \begin{align}
        	\Phi^{\epsilon}(\tau, \ast)\rightarrow \Phi^{0}(\tau, \ast), \ \ \ \ \ \ \text{ in } C^0 \text{ norm. }
        \end{align}
    
   According to Theorem \ref{existence},  in every local coordinate chart, 
    \begin{align}
    	\Phi^{\epsilon}(\tau, \ast)+(1-\mu) b_{\alpha\betabar}z^\alpha \zbetabar+\Ree\left(S_{\abbar} z^\alpha\zbetabar\right)
    	\label{11_53}
    \end{align}
is a convex function, for any $\tau\in\MR$. Then, because of the $C^0$ convergence of $\Phi^\epsilon\rightarrow \Phi^0$, we can replace $\Phi^{\epsilon}$ by $\Phi^{0}$ in (\ref{11_53}) and get
 \begin{align}
	\Phi^{0}(\tau,\ast)+(1-\mu) b_{\alpha\betabar}z^\alpha \zbetabar+\Ree\left(S_{\abbar} z^\alpha\zbetabar\right)
\end{align}
is a convex function, for any $\tau\in\MR$. Thus $\Phi^{0}(\tau, \ast)$ is  \SO-convex of modulus $\geq\mu$, for any $\tau\in\MR$. 

For the metric lower bound estimate, the proof is standard, we only need to do an integration by parts. Theorem \ref{existence} implies that, for any positive function $\eta$,
\begin{align}
	\int_{V} (\omega_0(1-\mu)+\iu\ddbar \Phi^{\epsilon}(\tau, \ast))\wedge \omega_0^{n-1} \eta \geq 0,
\end{align}
for any $\tau\in\MR$.
Then for any $\eta$ with sufficiently small support, we can find $\rho_0$, so that
\begin{align}
	\omega_0=\iu\ddbar\rho_0, \ \ \ \ \ \ \text{ in the support of $\eta$}.
\end{align}
Thus
\begin{align}
	\int_{V} (\rho_0(1-\mu)+ \Phi^{\epsilon}(\tau, \ast))\wedge \omega_0^{n-1}\wedge \iu\ddbar \eta \geq 0,
\end{align}for any $\tau\in\MR$.
Let $\epsilon\rightarrow 0$, we get
\begin{align}
	\int_{V} (\rho_0(1-\mu)+ \Phi^{0}(\tau, \ast))\wedge \omega_0^{n-1}\wedge \iu\ddbar \eta \geq 0,
\end{align}for any $\tau\in\MR$. So
\begin{align}
	\omega_0+\iu\ddbar\Phi^0(\tau, \ast)\geq \mu \omega_0,
\end{align}
 in the weak sense, for any $\tau\in\MR$.
 
 Thus, Theorem \ref{thm:estimate_HCMA_ProductSpace} is proved. 
 
 \begin{remark}
 	Here, we cannot use Lemma \ref{lemma:Equivalence Between Module of Convexity and Degree of Convexity} to directly derive metric lower bound estimate from convexity estimate, because $\Phi^0(\tau)$ may not be $C^2$ continuous and the degree of \SO-convexity may not be well defined.
 \end{remark}
 %%%%%%%%%%%%%%%%%%%%%%%%%%%%%%%%%%%%%%
%%%%%%%%%%%%%%%%
%%%%%%%%%%%%%%%%%%%%%%%%%%%%%%%%%%%%%%

\appendix
\section{Algebra Lemmas}\label{app:algebraLemmas}

\subsection{Lemmas for Convexity Estimate}
In this appendix, we show that if $\varphi$ is $C^2$ then Definition 
\ref{def:SOmegaZeroConvexityC0_1117} and Definition \ref{def:SOmegaConvexityMatrix_1117}
 are equivalent.  Furthermore, the modulus of \SO-convexity and the degree of \SO-convexity also coincide.
 The main results are  Lemma \ref{lemma:equivalent_definition_S_Omega_Convexity}
and
Lemma \ref{lemma:Equivalence Between Module of Convexity and Degree of Convexity}.
 \begin{lemma}[Equivalent Definitions of  Strict \SO-Convexity]
 	\label{lemma:equivalent_definition_S_Omega_Convexity}
 	Suppose that  $\varphi$ is a $C^2$ continuous function on $V$. Then it satisfies the  	condition of  	Definition \ref{def:SOmegaZeroConvexityC0_1117} if and only if it satisfies the condition of 
 	Definition \ref{def:SOmegaConvexityMatrix_1117}
 \end{lemma}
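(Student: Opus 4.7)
The plan is to express Definition \ref{def:SOmegaZeroConvexityC0_1117} via the real Hessian of the auxiliary function
\begin{align*}
u := \varphi+b_{\alpha\betabar}z^\alpha\overline{z^\beta}+\Ree(S_{\alpha\beta}z^\alpha z^\beta)
\end{align*}
on each chart, and then translate positive definiteness of this real Hessian into the matrix conditions of Definition \ref{def:SOmegaConvexityMatrix_1117} using the standard block-Hermitian realization and a Schur complement.

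First, since $\varphi\in C^{2}$, $u\in C^2$ and strict convexity of $u$ is equivalent to positive definiteness of the $(2n)\times(2n)$ real Hessian $D^2 u$ at every point. Writing $z^\alpha=x^\alpha+\iu y^\alpha$, the unitary change of frame from $\{\partial_{x^\alpha},\partial_{y^\alpha}\}$ to $\{\partial_{z^\alpha},\partial_{\overline{z^\alpha}}\}$ conjugates $D^2u$, up to a positive constant factor, to the Hermitian block matrix
\begin{align*}
H:=\begin{pmatrix} u_{\alpha\betabar} & u_{\alpha\beta}\\ u_{\overline{\alpha}\overline{\beta}} & u_{\overline{\alpha}\beta}\end{pmatrix}.
\end{align*}
Thus $u$ is strictly convex if and only if $H>0$. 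A direct computation yields $u_{\alpha\betabar}=b_{\alpha\betabar}+\varphi_{\alpha\betabar}$, which is the matrix $A=(g_{\alpha\betabar})$, while $u_{\alpha\beta}$ differs from $\varphi_{\alpha\beta}$ by $\pm S_{\alpha\beta}$ (the sign determined by the chosen convention for $\Ree$); denote the resulting symmetric matrix by $C$. Reality of $u$ then forces $u_{\overline{\alpha}\overline{\beta}}=\overline{C}$ and $u_{\overline{\alpha}\beta}=\overline{A}$, so that
\begin{align*}
H=\begin{pmatrix} A & C\\ \overline{C} & \overline{A}\end{pmatrix}.
\end{align*}

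Next, I would apply the Schur complement. Since $H$ is Hermitian, $H>0$ is equivalent to $A>0$ together with the Schur complement $\overline{A}-\overline{C}A^{-1}C>0$. The first condition is exactly $\omega_0+\iu\ddbar\varphi>0$, one of the requirements in Definition \ref{def:SOmegaConvexityMatrix_1117}. Since $\overline{A}>0$, the second is equivalent to all eigenvalues of $\overline{A^{-1}}\,\overline{C}A^{-1}C$ being strictly less than $1$. The standard fact that the non-zero eigenvalues of $XY$ coincide with those of $YX$ then shows that these eigenvalues agree with the eigenvalues of $C\,\overline{A^{-1}}\,\overline{C}\,A^{-1}$, which is precisely the local matrix representation of the tensor $K$ of Definition \ref{def:SOmegaConvexityMatrix_1117} (after possibly replacing $S$ by $-S$, which leaves the collection of \SO-convex functions unchanged). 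This yields the desired equivalence.

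The main obstacle is the first step: justifying cleanly that the real Hessian $D^2 u$ on $\ER^{2n}$ is unitarily conjugate to a positive multiple of the block-Hermitian $H$, and is therefore positive definite exactly when $H$ is. This is a classical fact, but it requires carefully tracking the factors of $\tfrac{1}{2}$ and verifying that the symmetric (non-Hermitian) off-diagonal block records exactly the pure holomorphic second derivatives $u_{\alpha\beta}$; once this is in place, the remainder is routine Schur-complement linear algebra.
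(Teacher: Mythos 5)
Your argument is correct in substance, but it takes a genuinely different route from the paper. The paper reduces the lemma to a real-matrix statement (its Lemma \ref{lemma:S_Omega_0_convexity_LinearAlgebra}): it encodes the real Hessian as the block matrix $\left(\begin{smallmatrix} U& V\\ V^T& W\end{smallmatrix}\right)$, builds the model quadratic polynomial $H({\bf z})$, uses Autonne--Takagi simultaneous diagonalization (Lemma \ref{lemma:simultaneous_diagonalization}) to reduce to $A=I$, $B=\Lambda$, gets $|\lambda_\alpha|<1$ by restricting to complex lines, and proves the converse by an explicit congruence factorization through $\diag(I+\Lambda, I-\Lambda)$. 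You instead pass to the full $2n\times 2n$ Hermitian matrix $H=\left(\begin{smallmatrix} A & C\\ \overline C& \overline A\end{smallmatrix}\right)$ and finish with a Schur complement plus the cyclic-eigenvalue identity, which avoids Autonne--Takagi entirely; the price is exactly the step you flag, namely that $D^2u>0$ on $\ER^{2n}$ is equivalent to $H>0$ on $\EC^{2n}$. That step is classical but does need the reality structure $\overline H=\Sigma H\Sigma$ (with $\Sigma$ the block swap): positivity of $w^\ast Hw$ is only given on the totally real subspace $\{(v,\overline v)\}$, and one must argue (e.g.\ via $J$-invariance of eigenspaces, or by checking that $T^\ast HT$ is a real symmetric matrix for the unitary frame change $T$) that this forces $H>0$; once written out, your route is arguably shorter and more standard, while the paper's route has the side benefit that the same simultaneous-diagonalization lemma is reused in Section \ref{sec:computation_for_Apriori_Estimate} and in Lemmas \ref{lemma:Linear_Algebra_Lemma for Equivalence Between Module of Convexity and Degree of Convexity} and \ref{lemma:monotone}. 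Two small caveats: the sign discrepancy you notice is the paper's own (Definition \ref{def:SOmegaConvexityMatrix_1117} has $\varphi_{\theta\nu}-S_{\theta\nu}$ while the reduction from Definition \ref{def:SOmegaZeroConvexityC0_1117} produces $\varphi_{\alpha\beta}+S_{\alpha\beta}$, and the paper itself sets $B=(\varphi_{\alpha\beta})+S$), but your parenthetical that replacing $S$ by $-S$ ``leaves the collection of \SO-convex functions unchanged'' is not literally true --- it exchanges $(S,\omega_0)$-convexity with $(-S,\omega_0)$-convexity, which is harmless only because $S$ ranges over all constant sections; and your identification of strict convexity of a $C^2$ function with pointwise positive definiteness of its Hessian is the same (slightly lossy, in one direction) convention the paper adopts implicitly, so it is a shared convention rather than a new gap.
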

\begin{lemma}
	[Equivalence between Modulus of Convexity and Degree of Convexity]
	\label{lemma:Equivalence Between Module of Convexity and Degree of Convexity}
	Suppose that  $\varphi$ is a $C^2$ continuous function on $V$ and  $S$ is a constant section of $T_{2,0}^{\ast}(V)$. Then $\varphi$ is \SO-convex of degree $>\delta$ if and only if it is \SO-convex of modulus $>\delta$.
\end{lemma}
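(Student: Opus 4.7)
The plan is to use Lemma \ref{lemma:equivalent_definition_S_Omega_Convexity} (and its natural variant for a shifted base metric) to translate both sides of the equivalence into pointwise quadratic-form inequalities on $T_p^{1,0}V$, and then prove the equivalence through a sharp $W$-weighted Cauchy--Schwarz estimate together with an explicit saturating construction. Writing $A=(\varphi_{\alpha\bar\beta}+b_{\alpha\bar\beta})$, $W=(b_{\alpha\bar\beta})$, and $L_{\alpha\beta}:=\varphi_{\alpha\beta}-S_{\alpha\beta}$, the matrix condition of Definition \ref{def:SOmegaConvexityMatrix_1117} (via the Takagi/singular-value interpretation of $K$) recasts strict $(S+\Theta,\omega_0)$-convexity of $\varphi$ as
\[ A>0 \quad\text{and}\quad |v^T(L-\Theta)v|<v^*Av \quad\text{for every } v\in T_p^{1,0}V\setminus\{0\}, \]
while applying the same reasoning to the shifted metric $(1-\delta)\omega_0$ recasts $(S,\omega_0)$-convexity of modulus $>\delta$ as
\[ A>\delta W \quad\text{and}\quad |v^T L v|<v^*(A-\delta W)v \quad\text{for every } v\neq 0. \]

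The algebraic ingredient I would isolate first is that, for any constant section $\Theta$ of $T_{2,0}^{\ast}(V)$ satisfying $\Theta W^{-1}\bar\Theta W^{-1}\leq\delta^2$ (the admissible set of Definition \ref{def:Degree_SO_convexity}),
\[ |v^T\Theta v|\leq\delta\cdot v^*Wv \text{ at every point and every } v, \]
with the bound sharp. Both assertions I would prove by passing to a $W$-orthonormal frame at the point (so $W=I$) and applying a Takagi decomposition $\Theta=U\Sigma U^T$ with $\Sigma=\mathrm{diag}(\sigma_i)$, $\sigma_i\geq 0$: the hypothesis reduces to $\sigma_i\leq\delta$, and then $|v^T\Theta v|\leq\sum_i\sigma_i|(U^Tv)_i|^2\leq\delta\|U^Tv\|^2=\delta v^*Wv$. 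For saturation, given $v\neq 0$ at a point $p$, the rank-one musical-dual ansatz $(\Theta_v)_{\alpha\beta}:=c\,\xi_\alpha\xi_\beta$ with $\xi_\alpha=W_{\alpha\bar\gamma}\bar v^\gamma$, $|c|=\delta/(v^*Wv)$, and $\arg c$ free (extended as a constant section using the affine chart at $p$) has $\|\Theta_v\|_W=\delta$ and $v^T\Theta_v v=c\,(v^*Wv)^2$ of magnitude $\delta\, v^*Wv$ with arbitrary prescribed phase.

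The direction modulus $\Rightarrow$ degree is then immediate: for any admissible $\Theta$, the triangle inequality combined with the key bound yields
\[ |v^T(L-\Theta)v|\leq |v^T L v|+|v^T\Theta v|<v^*(A-\delta W)v+\delta v^*Wv=v^*Av, \]
and $A>\delta W\geq 0$ forces $A>0$. For the converse, I fix a point $p$ and $v\neq 0$ in $T_p^{1,0}V$, then choose $\arg c$ so that $v^T\Theta_v v$ is antiparallel to $v^T L v$ as complex numbers (any phase if $v^T L v=0$), making the triangle inequality an equality, $|v^T(L-\Theta_v)v|=|v^T L v|+\delta v^*Wv$. The degree hypothesis applied at $p$ to this $\Theta_v$ then gives
\[ |v^T L v|+\delta v^*Wv=|v^T(L-\Theta_v)v|<v^*Av, \]
which after rearrangement is the modulus inequality at $(p,v)$; varying $v$ and $p$ and using positivity of $v^*(A-\delta W)v$ for all $v\neq 0$ to force $A>\delta W$ completes the proof.

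The essentially unique technical subtlety is the saturation step: exhibiting a \emph{constant} section $\Theta_v$ whose pointwise quadratic form at $p$ simultaneously attains the $W$-weighted bound on $|v^T\Theta v|$ and can be phase-rotated to align antiparallel to $v^T L v$. The rank-one musical-dual ansatz resolves both requirements at once, after which the two-way implication is purely linear-algebraic.
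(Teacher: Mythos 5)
Your proposal is correct and takes essentially the same route as the paper: the paper's Lemma \ref{lemma:Linear_Algebra_Lemma for Equivalence Between Module of Convexity and Degree of Convexity} reduces the statement to the same pointwise reformulations (via Lemma \ref{lemma:equivalent_definition_S_Omega_Convexity}) and then to the sup identity (\ref{1116A48})/(\ref{1116A50}), which is exactly your $W$-weighted bound together with its saturation, your rank-one, phase-adjustable $\Theta_v$ merely making explicit the step the paper dismisses as basic linear algebra. The only point to flag is that extending $\Theta_v$ from the single point $p$ to a genuine \emph{constant section} of $T_{2,0}^{\ast}(V)$ (i.e.\ compatibility with all affine transition maps, not just the chart at $p$) is precisely the implicit step in the paper's ``follows immediately'' deduction from the matrix lemma, so your argument stands on the same footing as the paper's there.
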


In the proof of these Lemmas and also in other parts of the paper, we 
need to use the following Autonne-Takagi factorization and its corollary  Lemma \ref{lemma:simultaneous_diagonalization}. The following Autonne-Takagi factorization is the Corollary 
4.4.4(c) 
of \cite{HornMatrix}.

\begin{lemma}
	[Autonne-Takagi Factorization] 
	\label{lemma:Autonne_Takagi_Factorization}Given a complex valued symmetric 
	matrix $S$, there is a unitary matrix $U$ such that 
	\begin{align}
		S=U^T \Sigma U
	\end{align}
in which $\Sigma$ is a non-negative diagonal matrix. And obviously the Hermitian 
matrix $S\overline S$ has a decomposition 
\begin{align}
	S\overline S=U^T \Sigma^2 \overline U.
\end{align}
\end{lemma}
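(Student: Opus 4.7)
The plan is to prove this by induction on the size $n$ of $S$, where the main task in each inductive step is to produce a unit vector $v_1$ with $Sv_1=\sigma_1\overline{v_1}$ for some $\sigma_1\geq 0$. Once such a $v_1$ is in hand, extending $v_1$ to any unitary $U_1$ whose first column is $v_1$ and writing $u_i$ for its columns, one computes $u_i^T S v_1=\sigma_1 u_i^T\overline{v_1}=\sigma_1\langle v_1,u_i\rangle=0$ for $i>1$; combined with the fact that $U_1^T S U_1$ is again symmetric (because $S$ is), this forces the first row and column of $U_1^T S U_1$ to vanish except for the diagonal entry $\sigma_1$. Thus one has a block decomposition $U_1^T S U_1=\mathrm{diag}(\sigma_1)\oplus S'$ with $S'$ an $(n-1)\times(n-1)$ complex symmetric matrix, to which the inductive hypothesis applies. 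The final identity $S\overline{S}=U^T\Sigma^2\overline{U}$ is immediate once $S=U^T\Sigma U$ is known, since $\Sigma$ is real and $U$ is unitary.

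For the base case $n=1$, writing $S=re^{i\theta}$ with $r\geq 0$ and setting $U=e^{-i\theta/2}$ gives $U^T S U=r$, which is the desired factorization.

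The core of the proof is therefore producing the extremal vector $v_1$. My plan is to maximize the continuous function $f(v)=|v^T S v|$ on the unit sphere $\{v:v^*v=1\}$ and call a maximizer $v_1$; by rotating $v_1$ by a unimodular scalar we may assume $\mu:=v_1^T S v_1\geq 0$. A Wirtinger-calculus Lagrange multiplier computation applied to $|v^T S v|^2$ with the constraint $v^*v=1$ yields $\overline{S}\,\overline{v_1}=(\lambda/(2\mu))v_1$ for a real Lagrange multiplier $\lambda$; taking complex conjugates gives $Sv_1=\sigma_1\overline{v_1}$ with $\sigma_1=\lambda/(2\mu)$, and pairing both sides with $v_1^T$ shows $\sigma_1=\mu\geq 0$. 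The degenerate case $\mu=0$ means $v^T S v\equiv 0$ on the unit sphere; by polarization this forces $S=0$, and the statement becomes trivial with $\Sigma=0$ and $U=I$.

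The main obstacle I expect is the phase bookkeeping in the extremal step: one must show the optimizer can be chosen so that $v_1^T S v_1$ is real and non-negative, and that the Lagrange multiplier comes out real, so that the identity $Sv_1=\sigma_1\overline{v_1}$ genuinely has $\sigma_1\geq 0$ rather than just complex. An alternative route, if one wants to avoid Wirtinger calculus, is to start from a singular value decomposition $S=U\Sigma V^*$, use $S=S^T$ to derive a commutation relation between $V^*U$ and $\Sigma$, and then adjust phases on each singular subspace of $\Sigma$ to align $U$ with $\overline{V}$; this approach works but requires extra care on repeated singular values, where the symmetry condition must be used to simultaneously diagonalize within each singular eigenspace.
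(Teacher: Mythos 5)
Your proposal is correct, but note that the paper does not prove this lemma at all: it simply quotes the Autonne--Takagi factorization as Corollary 4.4.4(c) of Horn--Johnson, whose proof there runs through the singular value decomposition (essentially the alternative route you sketch at the end, including the care needed on repeated singular values). Your argument is the other standard, self-contained proof: an extremal vector plus deflation. The key points all check out. The deflation works because $u_i^T S v_1=\sigma_1 u_i^T\overline{v_1}=\sigma_1\, v_1^{*}u_i=0$ for $i>1$ and symmetry of $U_1^T S U_1$ kills the first row as well; the phase issue you flag resolves itself, since pairing the stationarity condition $2\mu\,\overline{S}\,\overline{v_1}=\lambda v_1$ with $v_1^{*}$ gives $\lambda=2\mu^2$, so the multiplier is automatically real and $\sigma_1=\mu\geq 0$ (one can also just regard the problem as maximizing a smooth real function on the unit sphere of $\mathbb{R}^{2n}$); the degenerate case $\mu=0$ does force $S=0$ by polarization of the symmetric bilinear form $x^T S y$; and the recombination of unitaries in the induction ($U=(1\oplus W)U_1^{*}$, say) is routine even though you did not write it out. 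The final identity $S\overline{S}=U^T\Sigma^2\overline{U}$ follows exactly as you say from $UU^{*}=I$ and $\Sigma$ real. What the two approaches buy: the citation (SVD route) is shorter given standard machinery and makes the relation between $\Sigma$ and the singular values of $S$ explicit, while your variational induction is elementary and self-contained, needing only compactness of the sphere and first-order optimality.
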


With Autonne-Takagi Factorization, we can prove the following lemma.
\begin{lemma}\label{lemma:simultaneous_diagonalization}
	Suppose $A$ is an $n\times n$ positive definite Hermitian matrix and $B$ is a 
	complex $n\times n$ symmetric matrix. Then we can find an $n\times n$ 
	invertible 
	matrix $P$, so that
	\begin{align}
		PAP^\ast=I,\ \ \ \ \ \ \ \ \ P BP^T=\Lambda,  \label{20221106A13condtion}
	\end{align}
in which $\Lambda$ is a non-negative diagonal matrix.
\end{lemma}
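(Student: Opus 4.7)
The plan is to reduce the problem in two stages: first normalize $A$ to the identity by a congruence, then apply Autonne-Takagi factorization (Lemma \ref{lemma:Autonne_Takagi_Factorization}) to the transformed $B$.

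First, since $A$ is positive definite Hermitian, standard linear algebra (e.g.\ take $Q = A^{-1/2}$, or use a Cholesky-type decomposition followed by inversion) yields an invertible matrix $Q$ with $QAQ^\ast = I$. Under this congruence $B$ transforms to $\widetilde B := QBQ^T$. Crucially, $\widetilde B$ is still complex symmetric:
\begin{align}
\widetilde B^T = (QBQ^T)^T = Q B^T Q^T = Q B Q^T = \widetilde B,
\end{align}
because $B$ itself is symmetric. This is the step that makes the lemma work cleanly --- we used $P^T$ (not $P^\ast$) on the $B$ side precisely so the symmetric structure is preserved.

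Second, apply Lemma \ref{lemma:Autonne_Takagi_Factorization} to the complex symmetric matrix $\widetilde B$: there exists a unitary $U$ and a non-negative diagonal matrix $\Lambda$ with $U \widetilde B U^T = \Lambda$. Now set $P := UQ$. Then
\begin{align}
PAP^\ast = UQ A Q^\ast U^\ast = U(QAQ^\ast)U^\ast = U I U^\ast = I,
\end{align}
using unitarity of $U$, and
\begin{align}
PBP^T = UQ B Q^T U^T = U \widetilde B U^T = \Lambda,
\end{align}
which is exactly what we want. Invertibility of $P$ is clear since $Q$ is invertible and $U$ is unitary.

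There is no real obstacle here; the only subtlety is noticing that the second equation in (\ref{20221106A13condtion}) uses $P^T$ rather than $P^\ast$, so the congruence on the $B$-side preserves complex symmetry, which is exactly the hypothesis required to invoke Autonne-Takagi. Without that, one would have to deal with a generic complex matrix and could not reach a diagonal form by a unitary congruence of the stated type.
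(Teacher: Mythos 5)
Your proof is correct and follows essentially the same route as the paper: first find $Q$ with $QAQ^\ast=I$, then apply the Autonne--Takagi factorization (Lemma \ref{lemma:Autonne_Takagi_Factorization}) to the still-symmetric matrix $QBQ^T$ and compose the two transformations. The only difference is that you spell out the verification steps (preservation of symmetry, the final checks for $P=UQ$) that the paper leaves implicit.
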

\begin{proof}[Proof of Lemma \ref{lemma:simultaneous_diagonalization}]
   Since $A$ is a positive definite Hermitian matrix, we can find an invertible matrix 
   $Q$ so that 
   \begin{align}
   	QAQ^\ast=I.
   	\end{align}
   Then we apply Autonne-Takagi facorization to $QBQ^T$ and find $R\in U(n)$ so 
   that
   \begin{align}
   	R(QBQ^T)R^T=\Lambda.
   \end{align}
$P=RQ$ satisfies condition  (\ref{20221106A13condtion}).
\end{proof}

The proof of Lemma \ref{lemma:equivalent_definition_S_Omega_Convexity} 
essentially depends on the following lemma.
\begin{lemma}
	\label{lemma:S_Omega_0_convexity_LinearAlgebra}
	Suppose $U, V, W$ are real valued $n\times n$ matrices and $U, V$ are 
	symmetric. 
	Let
	\begin{align}
		A=\frac{1}{4}(U+W)+\frac{\iu}{4}(V-V^T), \label{20221106UVtoA}\\
		B=\frac{1}{4}(U-W)-\frac{\iu}{4}(V+V^T). \label{20221106UVtoB}
	\end{align}
Then 
\begin{align}
	\left(
	\begin{array}{cc}
	    U& V\\ V^T &W
	\end{array}
	\right)>0 \label{eqt:20221106UVW>0}
\end{align}
if and only if 
\begin{align}
	A>0\text{\ \ \ and \ \ \ } B \Abarinverse\  \Bbar \Ainverse<1. 
\end{align}
\end{lemma}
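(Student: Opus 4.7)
The plan is to rewrite the real quadratic form associated with $M:=\begin{pmatrix}U&V\\V^T&W\end{pmatrix}$ in the complex coordinates $z^\alpha=x^\alpha+\iu y^\alpha$, recognize the result as a Hermitian block matrix built from $A$ and $B$, and then reduce the positivity of that Hermitian matrix via a Schur complement.

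First I would introduce the invertible $2n\times 2n$ complex matrix
\begin{align}
T=\tfrac{1}{2}\begin{pmatrix}I&I\\-\iu I&\iu I\end{pmatrix},
\end{align}
which implements the change of variables $\begin{pmatrix}x\\y\end{pmatrix}=T\begin{pmatrix}z\\\bar z\end{pmatrix}$. A block-by-block computation, grouping the resulting entries against (\ref{20221106UVtoA})--(\ref{20221106UVtoB}), should yield
\begin{align}
T^\ast M T=H:=\begin{pmatrix}\bar A&\Bbar\\ B&A\end{pmatrix}.
\end{align}
Along the way one checks directly from (\ref{20221106UVtoA})--(\ref{20221106UVtoB}) and the symmetry of $U$ and $W$ (which is the only reading of the hypothesis that makes (\ref{eqt:20221106UVW>0}) a symmetric form) that $A$ is Hermitian and $B$ is complex symmetric; the latter ensures $(\Bbar)^\ast=B^T=B$, so that $H$ is a genuine Hermitian matrix to which Schur complement can be applied.

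Second I would show that real positive definiteness of $M$ is equivalent to Hermitian positivity of $H$. The key identity is that for any real symmetric $M$ and any complex vector $\eta$,
\begin{align}
\eta^\ast M\eta=(\Ree\eta)^T M(\Ree\eta)+(\Imm\eta)^T M(\Imm\eta),
\end{align}
so $M>0$ as a real form already forces $\eta^\ast M\eta>0$ for every nonzero complex $\eta$; the converse restricts to real $\eta$. Combined with the invertibility of $T$, this gives $M>0\iff T^\ast M T>0\iff H>0$.

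Third, a Schur complement on $H$ with pivot $\bar A$ yields $H>0$ iff $\bar A>0$ and $A-B\Abarinverse\Bbar>0$. The first condition is equivalent to $A>0$ since $A$ is Hermitian. For the second, conjugating by the Hermitian square root $A^{-1/2}$ rewrites it as $I>A^{-1/2}B\Abarinverse\Bbar A^{-1/2}$, and the standard fact $\mathrm{spec}(XY)=\mathrm{spec}(YX)$ identifies the eigenvalues of $A^{-1/2}B\Abarinverse\Bbar A^{-1/2}$ with those of $B\Abarinverse\Bbar\Ainverse$, giving exactly the stated condition. The only nontrivial step in the whole argument is the block calculation $T^\ast M T=H$; everything else is standard Hermitian linear algebra, and the only obstacle is sign-tracking in that expansion.
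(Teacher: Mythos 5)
Your proposal is correct, and it is a genuinely different route from the paper's. I checked the key block computation: with $T=\tfrac12\bigl(\begin{smallmatrix}I&I\\-\iu I&\iu I\end{smallmatrix}\bigr)$ one indeed gets $T^\ast M T=\bigl(\begin{smallmatrix}\Abar&\Bbar\\B&A\end{smallmatrix}\bigr)$, the off-diagonal blocks are adjoints of each other because $B^T=B$ (given $U,W$ symmetric — you are right that this, not ``$U,V$ symmetric,'' is the intended hypothesis), and the chain ``$M>0$ as a real form $\iff M>0$ as a Hermitian form $\iff T^\ast MT>0$, then Schur complement in the pivot $\Abar$, then similarity of $A^{-1/2}B\Abarinverse\,\Bbar A^{-1/2}$ with $B\Abarinverse\,\Bbar\Ainverse$'' is sound and delivers exactly the stated eigenvalue condition. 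The paper instead encodes $(U,V,W)$ as the real Hessian of the quadratic potential $H({\bf z})=U_{\alpha\beta}x^\alpha x^\beta+2V_{\alpha\beta}x^\alpha y^\beta+W_{\alpha\beta}y^\alpha y^\beta$, with $\partial_{\alpha\betabar}H=A_{\alpha\betabar}$ and $\partial_{\alpha\beta}H=B_{\alpha\beta}$; for ($\Rightarrow$) it invokes the simultaneous diagonalization Lemma \ref{lemma:simultaneous_diagonalization} (Autonne--Takagi) and restricts $H$ to complex lines to force $|\lambda_\alpha|<1$, and for ($\Leftarrow$) it writes an explicit real congruence factorization of $\bigl(\begin{smallmatrix}U&V\\V^T&W\end{smallmatrix}\bigr)$ with middle factor $\diag(I+\Lambda,I-\Lambda)$. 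Your argument buys a shorter, two-directions-at-once proof using only standard Hermitian linear algebra (congruence and Schur complements), with no need for Autonne--Takagi; the paper's version buys a proof aligned with the convexity viewpoint used throughout (the same quadratic-potential trick reappears in Lemma \ref{lemma:Linear_Algebra_Lemma for Equivalence Between Module of Convexity and Degree of Convexity}) and reuses a diagonalization lemma that is needed elsewhere in Section \ref{sec:computation_for_Apriori_Estimate} anyway. The only cosmetic caution: the direct substitution $(x;y)=T(z;\bar z)$ produces $T^TMT$, so phrase the step as a congruence applied to the Hermitian form (as your second paragraph in fact does) rather than as a literal change of variables in the real quadratic form.
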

We first prove Lemma \ref{lemma:S_Omega_0_convexity_LinearAlgebra}, then 
Lemma 
\ref{lemma:equivalent_definition_S_Omega_Convexity} follows easily.

\begin{proof}
	[Proof of Lemma \ref{lemma:S_Omega_0_convexity_LinearAlgebra}] 
	
	($\Rightarrow$) Given $U, V, W$ satisfying \ref{eqt:20221106UVW>0}, 
	 we can 
	construct a strictly convex quadratic 
	polynomial on $\EC^n$, with coordinate $z^\alpha=x^\alpha+\iu y^\alpha$,
	\begin{align}
		H({\bf z})= U_{\alpha\beta} x^\alpha x^\beta + 2 V_{\alpha\beta} x^\alpha 
		y^\beta+W_{\alpha\beta}y^\alpha y^\beta. \label{20221106A20}
	\end{align}
$H$ is a strictly PSH function, since it's strictly convex. And it's straightforward to 
check 
that 
\begin{align}
	\partial_{\alpha\betabar} H=A_{\alpha\betabar} \text{ \ \ \ and \ \ \ \ } 
	\partial_{\alpha\beta} H=B_{\alpha\beta}.
\end{align}
Therefore, we know $A>0$.  Then using Lemma \ref{lemma:simultaneous_diagonalization} 
we find matrix $P$, so that 
	\begin{align}
	PAP^\ast=I,\ \ \ \ \ \ \ \ \ P BP^T=\Lambda, 
\end{align}
in which $\Lambda=\diag(\lambda_1, \lambda_2,\ ... \ ,\lambda_n)$ is non-negative. We consider a new set of coordinates
$\{\zeta^\alpha\}$, with 
\begin{align}
	z^\alpha=P^\alpha_\beta \zeta^\beta.  \label{A13_1120}
\end{align}
With this new coordinate
\begin{align}
(	\partial_{\zeta^\alpha \zeta^\beta} H)
=(P_{\alpha}^\mu P_\beta^{\rho} \partial_{z^\mu 
	z^\rho} H)=P B P^T=\Lambda;\\
(	\partial_{\zeta^\alpha \zeta^\betabar} H)
=(P_{\alpha}^\mu \overline{P_\beta^{\rho}} 
	\partial_{z^\mu 
	z^\rhobar} H)=P A P^\ast=I.
\end{align} In above, $P=(P_\alpha^\beta)$, where $\alpha$ is the row index and $\beta$ is the column index. It's obvious that the  linear change of coordinates (\ref{A13_1120}) does not affect the fact that $H$ is a convex function. So the restriction of $H$ to a complex line 
\begin{align}
	L_\alpha=\{\zeta^\alpha=\tau, \zeta^\mu=0, \text{ for } \mu\neq \alpha |\tau\in\EC\}
\end{align}
is still a convex function. Therefore,
\begin{align}
	H|_{L_\alpha}=\tau\taubar+\lambda_\alpha\frac{\tau^2+\taubar^2}{2}
\end{align} is a convex function of $\tau$.
This implies $|\lambda_\alpha|<1$. So $\Lambda^2<1$ and 
\begin{align}
	B \Abarinverse \ \Bbar \Ainverse=P^{-1} \Lambda^2 P<1.\label{A18_1120}
\end{align}
($\Leftarrow$) For another direction, we use Lemma 
\ref{lemma:simultaneous_diagonalization} to diagonalize $A, B$ simultaneously. Since $A>0$, we can find 
$R$ so that 
\begin{align}
	B=R\Lambda R^T, \ \ \ \ \text{and } \ \ \ \ \ A=RR^\ast.
\end{align}
Let $R=R_1+\iu R_2.$ Then
\begin{align}
	&B=R_1\Lambda R_1^T-R_2 \Lambda R_2^T+\iu (R_2\Lambda R_1^T+R_1\Lambda 
	R_2^T); \label{20221106A30}\\
	&A=R_1 R_1^T+R_2  R_2^T+\iu (R_2 R_1^T-   R_1 	R_2^T).\label{20221106A31}
\end{align}
We can get $U, V,  W$ by adding  (\ref{20221106UVtoA}) and 
(\ref{20221106UVtoB}) or subtracting one from another:
\begin{align}
	U=2\Ree(A+B), \ \ W=2 \Ree(A-B),\ \ V=2 \Imm(A-B). \label{20221106A32}
\end{align}
Then plug (\ref{20221106A30}) and (\ref{20221106A31}) into (\ref{20221106A32}), we 
get 
\begin{align}
	\left(
	\begin{array}{cc}
		U&V\\ V^T& W
	\end{array}
	\right)
	=2
		\left(
	\begin{array}{cc}
		R_1& R_2\\ -R_2& R_1
	\end{array}
	\right)
		\left(
	\begin{array}{cc}
		I+\Lambda&\\& I-\Lambda
	\end{array}
	\right)
		\left(
	\begin{array}{cc}
		R_1^T&-R_2^T\\ R_2^T& R_1^T
	\end{array}
	\right).
\end{align}
Therefore, $\Lambda^2<1$  implies (\ref{eqt:20221106UVW>0}). 
Similar to (\ref{A18_1120}), we have 
\begin{align}
	B \Abarinverse\  \Bbar \Ainverse=R\Lambda^2 R^{-1} .
\end{align}
So $B \Abarinverse\  \Bbar \Ainverse<1$ implies $\Lambda^2<1$.  
 \end{proof}
Lemma \ref{lemma:equivalent_definition_S_Omega_Convexity}
 follows immediately, by letting $A=(\varphi_{\alpha\betabar})$ and $B=(\varphi_{{\alpha\beta}})+S.$
 
The following linear algebra lemma is essential  for the equivalence between modulus of convexity and degree of convexity.
\begin{lemma}
	\label{lemma:Linear_Algebra_Lemma for Equivalence Between Module of Convexity and Degree of Convexity}
	Suppose $A, G$ are positive definite $n\times n$ Hermitian matrices, $B$	is a complex valued symmetric matrix and $\mu$ is a positive constant. Then  
	\begin{align}
		A >\mu G \text{,\ \   and\ \ } B\overline{(A-\mu G)^{-1}} \ \overline{B} (A-\mu G)^{-1}<1 \label{condition:AbiggermuG}
	\end{align} 
if and only if
\begin{align}
	A>0,
	\text{\ and\ } (B-\Theta)\overline{A^{-1}} \ \overline{B-\Theta} A^{-1}<1,
	 \text{\ for any symmetric $\Theta$, with  } \Theta \overline{G^{-1}}\ \Thetabar G^{-1}\leq \mu^2. \label{condition:A>0BThetaA4}
\end{align}  
\end{lemma}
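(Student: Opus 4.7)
\medskip

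\noindent\textbf{Proof plan.} The plan is to translate both sides of the claimed equivalence into positivity statements for the same real quadratic form on $\ER^{2n}$ via Lemma \ref{lemma:S_Omega_0_convexity_LinearAlgebra}, and to close the gap between them by a pointwise supremum computation that rests on Autonne-Takagi factorization.

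First I would normalize $G$. Since $G$ is positive definite Hermitian, pick $P$ invertible with $PGP^* = I$, and set $\tilde A = PAP^*$, $\tilde B = PBP^T$, $\tilde\Theta = P\Theta P^T$. A direct calculation using $P^* = \overline P^T$ shows
\begin{align}
\tilde B \,\overline{\tilde A^{-1}}\,\overline{\tilde B}\,\tilde A^{-1} \;=\; P\bigl(B\,\overline{A^{-1}}\,\overline B\, A^{-1}\bigr)P^{-1},
\end{align}
and analogous conjugation identities hold for $B\,\overline{(A-\mu G)^{-1}}\,\overline B\,(A-\mu G)^{-1}$ and for $\Theta\,\overline{G^{-1}}\,\overline\Theta\, G^{-1}$. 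Since eigenvalues are similarity-invariant and $A > \mu G$ is equivalent to $\tilde A > \mu I$, both sides of the equivalence are invariant under this substitution; so assume $G = I$ henceforth.

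The key step is the pointwise supremum, for each fixed $z \in \EC^n$,
\begin{align}
\sup\bigl\{\Ree(\Theta_{\alpha\beta}z^\alpha z^\beta) : \Theta = \Theta^T,\ \Theta\,\overline\Theta \leq \mu^2 I\bigr\} \;=\; \mu|z|^2.
\end{align}
To prove it I would apply Autonne-Takagi (Lemma \ref{lemma:Autonne_Takagi_Factorization}) to write $\Theta = U^T\Sigma U$ with $U$ unitary and $\Sigma = \diag(\sigma_1,\dots,\sigma_n)\geq 0$; since the eigenvalues of $\Theta\overline\Theta = U^T\Sigma^2\overline U$ are the $\sigma_i^2$, the constraint is $\sigma_i \leq \mu$. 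Setting $w = Uz$,
\begin{align}
\Ree(\Theta_{\alpha\beta}z^\alpha z^\beta) \;=\; \Ree\Bigl(\sum_i \sigma_i w_i^2\Bigr) \;\leq\; \mu\sum_i |w_i|^2 \;=\; \mu|z|^2,
\end{align}
with equality attained by taking $\sigma_i = \mu$ and choosing $U$ so each $w_i^2$ is a non-negative real. In particular the admissible set of $\Theta$'s is compact and the supremum is a maximum.

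Finally I would apply Lemma \ref{lemma:S_Omega_0_convexity_LinearAlgebra} in both directions. With $(H,C) = (A-\mu I,\, B)$ it identifies Condition \eqref{condition:AbiggermuG} with strict positivity on $\ER^{2n}\setminus\{0\}$ of
\begin{align}
q(z) \;=\; A_{\alpha\betabar}z^\alpha\overline{z^\beta} + \Ree(B_{\alpha\beta}z^\alpha z^\beta) - \mu|z|^2.
\end{align}
With $(H,C) = (A,\, B-\Theta)$ for each admissible $\Theta$, it identifies Condition \eqref{condition:A>0BThetaA4} with: $A > 0$ (take $\Theta = 0$) together with strict positivity of $q(z) + \mu|z|^2 - \Ree(\Theta_{\alpha\beta}z^\alpha z^\beta)$ for every admissible $\Theta$. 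Compactness of the admissible set shows that the second formulation is equivalent to $q(z) > 0$ on $\ER^{2n}\setminus\{0\}$, which is the first; and then $A > \mu I > 0$ is automatic. The main obstacle is the supremum step together with the interchange of quantifiers, which needs attainment of the supremum to preserve strict inequality.
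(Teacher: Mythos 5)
Your proposal is correct and follows essentially the same route as the paper: both translate condition (\ref{condition:AbiggermuG}) and condition (\ref{condition:A>0BThetaA4}) into strict positivity of the associated real quadratic forms (via Lemma \ref{lemma:S_Omega_0_convexity_LinearAlgebra}, which is the matrix form of Lemma \ref{lemma:equivalent_definition_S_Omega_Convexity} used in the paper), normalize $G=I$, and reduce the equivalence to the supremum identity $\sup\{\Ree(\Theta_{\alpha\beta}z^\alpha z^\beta):\Theta\ \text{symmetric},\ \Theta\overline\Theta\leq\mu^2\}=\mu|z|^2$. The only difference is that you prove this identity explicitly via Autonne--Takagi factorization, including attainment of the supremum (needed for the strict inequalities), whereas the paper leaves it as an exercise in ``basic linear algebra''; this is a welcome completion, not a deviation.
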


\begin{proof}[Proof of Lemma \ref{lemma:Linear_Algebra_Lemma for Equivalence Between Module of Convexity and Degree of Convexity}]
	We first provide a proof in the $n=1$ case. The lemma in this case is very intuitive and it best demonstrates the idea.  In this case,  $B, \Theta$ are complex numbers, $A, G$ are positive numbers, and, without loss of generality, we can assume $G=1$. As illustrated by Figure \ref{fig:Cone_Lemma_for_Metric_Lower_bound}, 
	the set 
	\begin{align}
		\{(B, A)\big|A>0, |B-\Theta|<A\}
	\end{align}
	is a cone with the corner at $(\Theta, 0)$. 
	\begin{figure}[h]
		\centering
		\includegraphics[height=5.5cm]{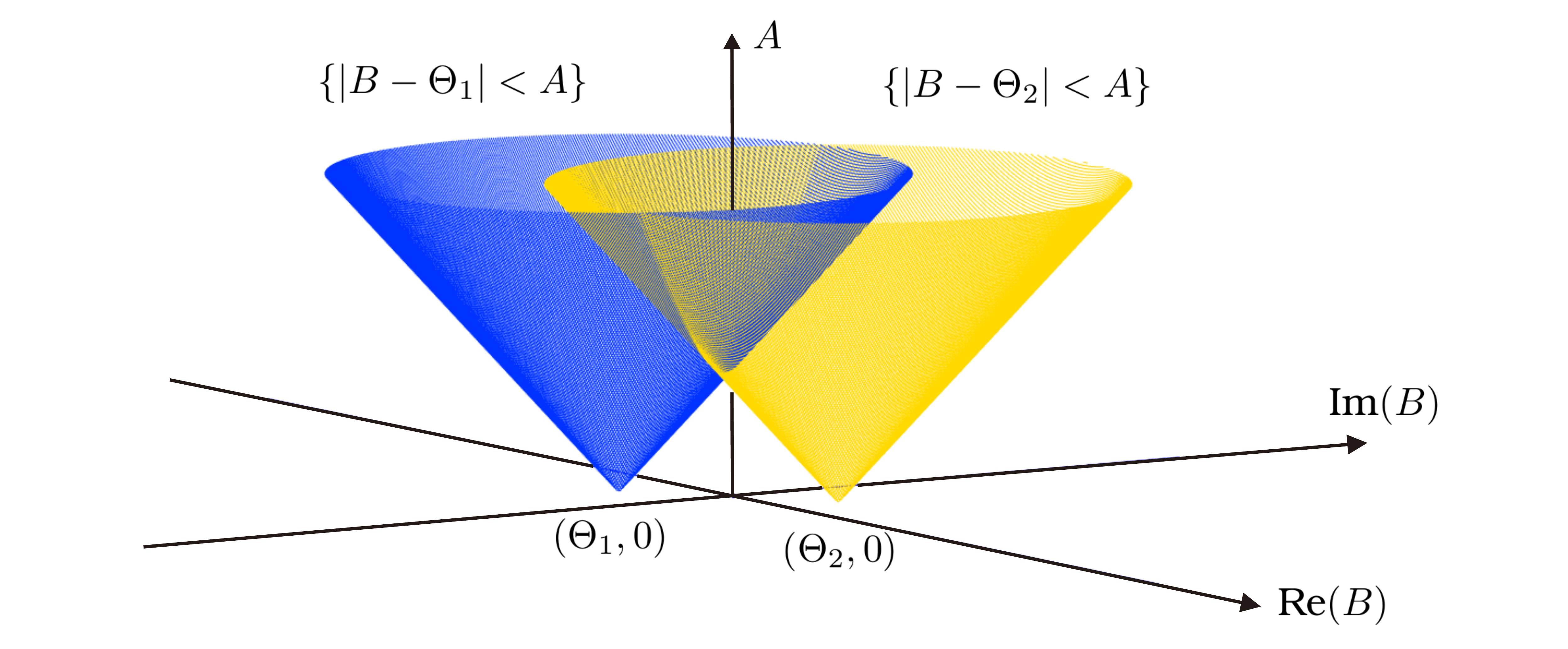}
		\caption{Metric Lower Bound Estimate}
		\label{fig:Cone_Lemma_for_Metric_Lower_bound}
	\end{figure}
	The condition that, for all $\Theta$ with 
	$|\Theta|\leq \mu$, 
	\begin{align}
		(B, A)\subset \{(B, A)\big|A>0, |B-\Theta|<A\}
	\end{align}
	is equivalent to 
	\begin{align}
		(B, A)\subset \bigcap_{|\Theta|\leq \mu}\{(B, A)\big|A>0, 
		|B-\Theta|<A\}. \label{1116A38}
	\end{align}
	The elementary geometry tells us that
	\begin{align}
		\bigcap_{|\Theta|\leq \mu}\{(B, A)\big|A>0, 
		|B-\Theta|<A\}=\{A> \mu +|B|\}.
	\end{align}
	Therefore, (\ref{1116A38}) is equivalent to  
	\begin{align}
	A>\mu\ \ \ \ \ \  \text{and}\ \ \ \ \ \  \left|\frac{B}{A-\mu}\right|<1,
	\end{align} which is condition (\ref{condition:AbiggermuG}).

In general dimension, we need to construct quadratic polynomials from $A, B, G$, similar to the proof of Lemma \ref{lemma:equivalent_definition_S_Omega_Convexity}.
Given an Hermitian matrix $H$ and a symmetric matrix $S$, let
\begin{align}
	K_S^H({\bf z})= H_{{\alpha\betabar}}z^\alpha z^{\betabar}+\Ree(S_{\alpha\beta} z^\alpha z^\beta).
\end{align}

According to Lemma \ref{lemma:equivalent_definition_S_Omega_Convexity}, $A, B, G$ and $\mu$ satisfying (\ref{condition:AbiggermuG}) is equivalent to 
that 
\begin{align}
	K_B^{A-\mu G} \text{\ is a strictly convex function on $\EC^n$};
\end{align}
$A, B, G$ and $\mu$ satisfying (\ref{condition:A>0BThetaA4}) is equivalent to 
that 
\begin{align}
	K_{B-\Theta}^{A} \text{\ is a strictly convex function on $\EC^n$, for any symmetric $\Theta$, with $\Theta\overline{G^{-1}}\ \overline\Theta G^{-1}\leq \mu^2$}.
\end{align}
Because $K_{B-\Theta}^A$ and $K_B^{A-\mu G}$ are both quadratic polynomials, they are strictly convex if and only if they are positive on $\EC^n-\{0\}$. So we need to show 
\begin{align}
	K_B^{A-\mu G}>0, \text{\ on \ } \EC^n\backslash\{0\}
\end{align}
if and only if 
\begin{align}
	K_{B-\Theta}^{A}>0, \text{\ on \ } \EC^n\backslash\{0\}\text{, for any symmetric $\Theta$, with $\Theta\overline{G^{-1}}\ \overline\Theta G^{-1}\leq \mu^2$}. 
\end{align}
Equivalently, we need to show
\begin{align}
	K_B^{A}({\bf z})>\mu G_{\alpha\betabar} z^\alpha z^\betabar, \text{\ \  on \ \ } \EC^n\backslash\{0\}
\end{align}
if and only if 
\begin{align}
	K_{B}^{A}({\bf z})>\Ree(\Theta_{\alpha\beta} z^\alpha z^\beta), \text{\ on \ } \EC^n\backslash\{0\}\text{, for any symmetric $\Theta$, with $\Theta\overline{G^{-1}}\ \overline\Theta G^{-1}\leq \mu^2$}. 
\end{align}
This is valid because
\begin{align}
	\sup_{\Theta \text{ symmetric, } \Theta\overline{G^{-1}}\ \overline\Theta G^{-1}\leq \mu^2}
	 \Ree(\Theta_{\alpha\beta} z^\alpha z^\beta)= 
	\mu G_{\alpha\betabar} z^\alpha z^\betabar.
	\label{1116A48}
\end{align}
To prove (\ref{1116A48}), we find $P$, so that $P GP^\ast=I$, and we let \begin{align}
	\frac{P \Theta P^T}{\mu}=S.
\end{align} Then we find (\ref{1116A48}) is equivalent to 
\begin{align}
	\sup_{S \text{ symmetric, } S \overline S \ \leq 1}
	\Ree(S_{\alpha\beta} z^\alpha z^\beta)= 
\delta_{\alpha\betabar} z^\alpha z^\betabar.
	\label{1116A50}
\end{align}
(\ref{1116A50}) can be easily proved with basic linear algebra.

\end{proof}

Lemma 
\ref{lemma:Equivalence Between Module of Convexity and Degree of Convexity}
follows immediately, by letting $A=(\varphi_{\alpha\betabar})$ and $B=(\varphi_{{\alpha\beta}})+S.$
%%%%%---------------------------------------------------------
\subsection{Monotonicity}\label{app:monotone_Lemma}
In this section, we prove the following algebra lemma.
\begin{lemma}[A Monotonicity Lemma]\label{lemma:monotone}
	
	Suppose $A_0$ and $A$ are Hermitian matrices satisfying 
	\begin{align}
		A_0>0\ \ \ \text{and} \ \ \  A_0+A>0, \label{20221106A_0>0_A_0+A>0}
	\end{align}
and $B$ is a symmetric  matrix. Let 
\begin{align}
	K_t=B\overline{(A_0+t A)^{-1}} \ \Bbar (A_0+t A)^{-1}. \label{20221106expressionK_t}
\end{align}
Then $t^{2p}\tr (K_t^p) $ and the maximum eigenvalue of $t^2 K_t$ are both 
non-decreasing functions of $t$. Here $p$ is any positive integer.
\end{lemma}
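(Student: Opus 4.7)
The plan is to reduce the monotonicity statement to the joint Loewner-monotonicity, separately in each argument, of the two functionals $(R_1,R_2) \mapsto \tr((R_1 B R_2 B^*)^p)$ and $(R_1,R_2) \mapsto \lambda_{\max}(R_1 B R_2 B^*)$ on pairs of positive semidefinite matrices, after rewriting $t^2 K_t$ in a convenient form. Throughout one uses $\bar B = B^*$, which follows from $B = B^T$.

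First I would introduce the change of variables $M_t := A_0 + tA = t N_t$ with $N_t := A + t^{-1} A_0$, defined for $t>0$. Because $A_0 > 0$, one has $\frac{d}{dt} N_t = -t^{-2} A_0 < 0$ in the Loewner order, so $N_t$ is strictly decreasing on the range where it is positive (which includes $(0,1]$ by the hypothesis $A_0 + A > 0$). Operator monotonicity of $R \mapsto R^{-1}$ then makes $N_t^{-1}$, and hence also $\overline{N_t^{-1}} = (N_t^{-1})^T$, strictly increasing in $t$. Using $M_t^{-1} = t^{-1} N_t^{-1}$, a direct computation yields
\begin{align*}
t^2 K_t \;=\; B\,\overline{N_t^{-1}}\,\bar B\, N_t^{-1},
\end{align*}
so $t^{2p}\tr(K_t^p) = \tr((N_t^{-1} B\,\overline{N_t^{-1}}\,\bar B)^p)$ by cyclicity, and the eigenvalues of $t^2 K_t$ coincide with those of $R_1 B R_2 B^*$ for $R_1 := N_t^{-1}$ and $R_2 := \overline{N_t^{-1}}$. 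The eigenvalues are real and non-negative because $R_1 B R_2 B^*$ is similar to the PSD matrix $PP^*$ with $P := N_t^{-1/2} B\,\overline{N_t^{-1/2}}$.

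The plan is then to show that both $F_p(R_1, R_2) := \tr((R_1 B R_2 B^*)^p)$ and $G(R_1, R_2) := \lambda_{\max}(R_1 B R_2 B^*)$ are non-decreasing in each of $R_1, R_2 \succeq 0$ separately. Fixing $R_2$ and setting $Q := B R_2 B^* \succeq 0$, I would invoke the standard identities
\begin{align*}
\tr((R_1 Q)^p) = \tr((Q^{1/2} R_1 Q^{1/2})^p), \qquad \lambda_{\max}(R_1 Q) = \lambda_{\max}(Q^{1/2} R_1 Q^{1/2})
\end{align*}
(the first by cyclic insertion of $Q^{1/2} Q^{1/2}$; the second by equality of the non-zero spectra of $AB$ and $BA$), and observe that $Q^{1/2} R_1 Q^{1/2}$ is PSD and Loewner-increasing in $R_1$. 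The desired monotonicity then follows from Weyl's inequality $\lambda_k(X) \leq \lambda_k(Y)$ whenever $X \preceq Y$ on the PSD cone, applied to $\sum_k \lambda_k^p$ (for integer $p \geq 1$) and to $\lambda_{\max}$. Monotonicity in $R_2$ follows after the cyclic rewrite $\tr((R_1 B R_2 B^*)^p) = \tr((R_2 B^* R_1 B)^p)$, and analogously for $\lambda_{\max}$.

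The only mild obstacle is the initial bookkeeping that packages $t^2 K_t$ as a joint expression in two PSD matrices that are each separately monotone in $t$; once this repackaging is in place, the rest is entirely standard matrix monotonicity and does not require any deeper tool such as Lieb's concavity theorem.
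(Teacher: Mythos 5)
Your proposal is correct, but it takes a genuinely different route from the paper's. The paper proves the trace statement by direct differentiation: at a fixed $t_0$ it simultaneously normalizes $P(A_0+t_0A)P^\ast=I$ and $PAP^\ast=\Lambda$, sets $H=PBP^T$, and computes $\frac{d}{dt}\bigl[t^{2p}\tr(K_t^p)\bigr]\big|_{t=t_0}=2p\,t_0^{2p-1}\tr\bigl[(HH^\ast)^p(I-t_0\Lambda)\bigr]\geq 0$, the key point being $I-t_0\Lambda=PA_0P^\ast>0$; the maximum-eigenvalue claim is then only implicit (e.g.\ via $t^{2}\bigl(\tr K_t^p\bigr)^{1/p}\rightarrow t^{2}\lambda_{\max}(K_t)$ as $p\rightarrow\infty$). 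Your argument avoids differentiation and diagonalization altogether: the substitution $N_t=A+t^{-1}A_0$ absorbs the factor $t^{2}$, reduces the problem to Loewner monotonicity of $N_t^{-1}$ (antitonicity of the inverse on positive matrices, applied to the decreasing family $N_t$), and then congruence invariance plus Weyl's eigenvalue monotonicity give the trace and the maximum-eigenvalue statements in one stroke — indeed for any monotone spectral functional — with no smoothness in $t$ needed. What the paper's computation buys is self-containedness (only the simultaneous-diagonalization Lemma \ref{lemma:simultaneous_diagonalization} already used elsewhere); what yours buys is a cleaner, derivative-free proof that treats both claimed quantities uniformly. Two small points you should make explicit: the endpoint $t=0$, where your substitution is undefined, is trivial because both quantities vanish there and are non-negative for $t>0$; and positivity of $N_t$ on $(0,1]$ follows from writing $A_0+tA=(1-t)A_0+t(A_0+A)>0$.
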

\begin{proof}
	First of all, we note that condition (\ref{20221106A_0>0_A_0+A>0}) implies 
	\begin{align}
		A_0+tA>0,         \ \ \ \ \text{ for any } t\in (0,1).
	\end{align} So, in (\ref{20221106expressionK_t}),  ${(A_0+t A)^{-1}}$ exists.
In the following we compute 
\begin{align}
	\frac{d}{dt}\left[t^{2p}\tr(K_t^p)\right] 
\end{align}
and show it's non-negative. We need to simultaneously diagonalize $A$ and $ A_0+tA$ 
to simplify the computation.

For $t_0\in[0,1]$, find $P$ so that 
\begin{align}
	PAP^\ast=\Lambda, \ \ \ \ P(A_0+t_0 A)P^\ast=I.  \label{20221106A38}
\end{align}
Here $\Lambda=\diag(\lambda_1, \ ...\ ,\lambda_n)$, with $\lambda_\alpha\in\ER$.
Let
\begin{align}
H=PBP^T,   \label{20221106A39}
\end{align} then $H$ is a symmetric matrix. 
Plug (\ref{20221106A38}) and (\ref{20221106A39}) into 
(\ref{20221106expressionK_t}) to simplify the expression of $K_{t_0}$. We get, at $t=t_0$, 
\begin{align}
	K_{t_0}=P^{-1}HH^\ast P, 
\end{align}
and so
\begin{align}
	P K_{t_0}^{p-1} P^{-1}=(HH^\ast)^{p-1}. \label{20221106expressionHHbar}
\end{align}
Then we compute the derivative,
\begin{align}
	&\frac{d}{dt}\left[\tr(K_t^p) t^{2p}\right]\\
	=&\frac{d}{dt}\left[t^{2p}\tr\left(B\overline{(A_0+t A)^{-1}} \ \Bbar (A_0+t 
	A)^{-1}\right)^p\right]\\
	=&2p\cdot t^{2p-1}\tr(K_t^p)-p\cdot t^{2p} \tr \left[B\overline{(A_0+t A)^{-1}}\  \Abar\ 
	\overline{(A_0+t A)^{-1}} \ \Bbar (A_0+t	A)^{-1} K_t^{p-1}\right]\\
	&\ \ \ \ \ \ \ \ \ \ \ \ \ \ \ \ \  \ -p\cdot t^{2p} \tr \left[B\overline{(A_0+t A)^{-1}}\  \Bbar\ 
	{(A_0+t A)^{-1}} \ A (A_0+t	A)^{-1} K_t^{p-1}\right].
\end{align}
Plug (\ref{20221106A38}) (\ref{20221106A39}) and 
 (\ref{20221106expressionHHbar}) into the 
expression above, we get, at $t=t_0$,
\begin{align}
	\frac{d}{dt}\left[t^{2p}\tr(K_t^p)\right] = p\cdot t_0^{2p-1}
	\left(2\cdot \tr\left[ (HH^\ast)^p\right]-t_0 \cdot\tr\left[(H^\ast H)^p\Lambda 
	\right]-t_0 \cdot
	\tr\left[(H H^\ast)^p\Lambda \right]\right).
\end{align}  Using the fact that $H$ is symmetric, we know
\begin{align}
	\tr\left[(H^\ast H)^p\Lambda \right]=\tr\left[ \Lambda(HH^\ast)^p\right]=\tr\left[ 
	(HH^\ast)^p\Lambda\right].
\end{align}
So
\begin{align}
	\frac{d}{dt}\left[t^{2p}\tr(K_t^p)\right] &= 2p\cdot t_0^{2p-1}
	\left(\tr\left[ (HH^\ast)^p\right]-t_0 \cdot
	\tr\left[(H H^\ast)^p\Lambda \right]\right)\\
	& = 2p\cdot t_0^{2p-1}
	\left(\tr\left[ (HH^\ast)^p(I-t_0\Lambda )\right]\right).\label{20221106A49}
\end{align}  
It's obvious that $H H^\ast$ is semi-positive definite, and, according to 
 (\ref{20221106A38}), 
 \begin{align}
 	I-t_0\Lambda=PA_0P^\ast>0.\label{20221106A50}
 \end{align}
Therefore, we get
\begin{align}
	\frac{d}{dt}\left[t^{2p}\tr(K_t^p)\right]\geq 0.
\end{align}
\end{proof}

%%%%%---------------------------------------------------------

\subsection{Concavity}\label{app:Concavity_lemma}
In this appendix, we show the operator (\ref{concave_Function}) is concave. 
Suppose $A$ is an $(n+1)\times (n+1)$ positive definite Hermitian matrix and  $\MG$ is an $n\times n$ positive definite Hermitian matrix. Denote the lower right $n\times n$ block of $A$ by $\mathcal{A}$. Let
\begin{align}
	F_1(A)=\log\left(A_{0\overline{0}}-A_{0\betabar} \mathcal{A}^{\alpha\betabar} A_{\alpha \overline{0}}\right)
\end{align}
and
\begin{align}
	F_2(A)=-\log\left( \MG_{\alpha\betabar}\mathcal{A}^{\alpha\betabar} \right).
\end{align}
We will prove 
\begin{lemma}
	$F_1$ is a concave function of $A$ in the space of positive definite $(n+1)\times (n+1)$ Hermitian matrices;  $F_2$ is a concave function of $\mathcal{A}$ in the space of positive definite $n\times n$ Hermitian matrices.
\end{lemma}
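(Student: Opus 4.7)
For $F_1$, the plan is to recognize the expression $A_{0\overline{0}} - A_{0\overline{\beta}}\mathcal{A}^{\alpha\overline{\beta}}A_{\alpha\overline{0}}$ as the Schur complement of the lower-right block $\mathcal{A}$ in the positive Hermitian matrix $A$. Writing $a = A_{0\overline{0}}$ and $u_\alpha = A_{\alpha\overline{0}}$, so that the expression equals $a - u^* \mathcal{A}^{-1} u > 0$, I would use the elementary variational identity
\[
a - u^*\mathcal{A}^{-1} u \;=\; \min_{v \in \EC^n}\bigl(a + v^* \mathcal{A} v + u^* v + v^* u\bigr),
\]
obtained by minimizing the quadratic on the right, which is strictly convex in $v$ because its Hessian is $\mathcal{A} > 0$; the unique minimizer is $v = -\mathcal{A}^{-1}u$. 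For every fixed $v$ the quantity in parentheses is an affine function of the entries of $A$, since $a$, $u$ and $\mathcal{A}$ are linear projections of $A$. Thus the Schur complement is a pointwise infimum of affine functions of $A$, hence concave and positive on the cone of positive Hermitian matrices. Composing with $\log$, which is concave and non-decreasing on $(0,\infty)$, preserves concavity, so $F_1$ is concave.

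For $F_2$, a ``sup-of-affine'' trick does not directly produce concavity, so I would instead verify $F_2'' \leq 0$ by a direct Hessian computation along a line. Fix a Hermitian direction $H$, set $\mathcal{A}_t = \mathcal{A} + tH$, and write $T(t) = \tr(\mathcal{G}\,\mathcal{A}_t^{-1})$, so that $F_2(\mathcal{A}_t) = -\log T(t)$ and concavity in the direction $H$ is equivalent to $T\,T'' \geq (T')^2$. Differentiating $\mathcal{A}_t^{-1}$ twice via $\tfrac{d}{dt}\mathcal{A}_t^{-1} = -\mathcal{A}_t^{-1} H \mathcal{A}_t^{-1}$ and setting $M = \mathcal{A}^{-1}$ yields, at $t=0$,
\[
T = \tr(\mathcal{G} M), \qquad T' = -\tr(\mathcal{G} M H M), \qquad T'' = 2\tr(\mathcal{G} M H M H M).
\]
The key step is the substitution $S = M^{1/2}\mathcal{G}M^{1/2}$ (positive Hermitian, since both $\mathcal{G}$ and $M$ are) and $K = M^{1/2} H M^{1/2}$ (Hermitian). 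Cyclicity of the trace then collapses the three quantities to $T = \tr S$, $T' = -\tr(SK)$, $T'' = 2\tr(SK^2)$, and the required inequality becomes
\[
2\,\tr(S)\,\tr(SK^2) \;\geq\; \bigl(\tr(SK)\bigr)^2,
\]
which is exactly the Cauchy--Schwarz inequality for the inner product $\langle X,Y\rangle_S = \tr(S X^* Y)$, applied to $X = I$ and $Y = K$ (noting that $K^* K = K^2$ since $K$ is Hermitian, and that $\langle \cdot,\cdot\rangle_S$ is a genuine positive inner product because $S > 0$).

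The only delicate ingredient I anticipate is finding the right substitution in $F_2$ under which the three successive derivatives of $T$ reduce to $\tr S$, $\tr(SK)$, $\tr(SK^2)$; the choice $S = M^{1/2}\mathcal{G}M^{1/2}$, $K = M^{1/2} H M^{1/2}$ is engineered precisely to place $\mathcal{G}$ and $H$ symmetrically around the alternating $M$'s so that the trace identities above hold. Once this substitution is in hand, the rest of both arguments is routine linear algebra: checking Hermiticity of the relevant matrices so that all traces are real, and invoking Cauchy--Schwarz. No analytic input beyond the positivity assumptions on $A$, $\mathcal{A}$, and $\mathcal{G}$ is needed.
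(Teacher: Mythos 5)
Your proposal is correct, but it proves both halves by routes genuinely different from the paper's. For $F_1$, the paper also reduces to showing that the Schur complement $f_1(A)=A_{0\overline 0}-A_{0\betabar}\mathcal{A}^{\alpha\betabar}A_{\alpha\overline 0}$ is concave and then composes with $\log$, but it does so by a direct second-derivative computation along a line: it rewrites $f_1=\det A/\det\mathcal{A}$, simultaneously diagonalizes $\mathcal{A}$ and the lower-right block of the perturbation via Lemma \ref{lemma:simultaneous_diagonalization}, and checks $q''(0)\leq 0$ term by term. Your infimum-of-affine-functions characterization $a-u^{\ast}\mathcal{A}^{-1}u=\min_{v}\bigl(a+v^{\ast}\mathcal{A}v+u^{\ast}v+v^{\ast}u\bigr)$ reaches the same conclusion without any diagonalization or determinant identity, and is arguably the cleaner and more robust argument. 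For $F_2$, the paper asserts the stronger statement that $f_2=1/\tr(\MG\mathcal{A}^{-1})$ is itself concave, deducing it from the classical concavity of $\det(\mathcal{A})/\sigma_{n-1}(\mathcal{A})=1/\tr(\mathcal{A}^{-1})$ on positive Hermitian matrices, and then takes $\log$; you instead prove directly the log-concavity of $1/\tr(\MG\mathcal{A}^{-1})$, i.e.\ exactly the concavity of $F_2$ that the lemma asserts, by the substitution $S=M^{1/2}\MG M^{1/2}$, $K=M^{1/2}HM^{1/2}$ and the Cauchy--Schwarz inequality for $\langle X,Y\rangle_S=\tr(SX^{\ast}Y)$ (your inequality $\tr(S)\,\tr(SK^2)\geq(\tr(SK))^2$ even gives a margin over the needed $2\tr(S)\,\tr(SK^2)\geq(\tr(SK))^2$). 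What the paper's route buys is brevity via a known matrix inequality and the stronger concavity of $f_2$ itself (which, however, is not needed elsewhere); what your route buys is a self-contained, elementary verification that uses nothing beyond positivity of $A$, $\mathcal{A}$, $\MG$, at the cost of proving only the logarithmic concavity actually required.
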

\begin{proof}

For the concavity of $F_1$, actually, we can show 
\begin{align}
f_1(A)=A_{0\overline{0}}-A_{0\betabar} \mathcal{A}^{\alpha\betabar} A_{\alpha \overline{0}}
\end{align}
is a concave function of $A$. 
Let $H$ be an $(n+1)\times (n+1)$ Hermitian matrix. Similar to $A$, denote the lower-right block of $H$ by $\MH$. Let 
\begin{align}
	q(t)=f_1(A+tH),  \label{expression_a77}
\end{align} for $t$ close to $0$.
We will show that $q''(0)\leq0.$ 
To simplify the computation, we diagonalize $\mathcal{A}$ and $\MH$ simultaneously. Find an $n\times n$ matrix $\MP$, so that
\begin{align}
	\MP \mathcal{A}\MP^\ast=I; \ \ \ \ \ \ \MP\MH\MP^\ast=\Lambda=\diag(\lambda_1,\ ...\ , \lambda_n).
\end{align}
Let
\begin{align}
	P=
	\left(
	\begin{array}{cc}
		1&0\\0&\MP
	\end{array}
	\right).
\end{align}
Note that another expression for $f_1$ is
\begin{align}
	f_1(A)=\frac{\det A}{\det{\mathcal{A}}}.
\end{align}
So
\begin{align}
	q(t)=\frac{\det (A+tH)}{\det (\mathcal{A}+t\MH)}
		=\frac{\det[P({{A}}+tH)P^\ast]}{\det[\MP({\mathcal{A}}+t\MH)\MP^\ast]}.
\end{align}
Denote that
\begin{align}
	PAP^\ast=\left(
	\begin{array}{cccc}
		a_{0\overline{0}}& \cdots & u_{0\betabar}& \cdots \\ \vdots& & &\\
		u_{\alpha \overline{0}}&  & I\\ \vdots&
	\end{array}
	\right), \ \ \ \ \
		PHP^\ast=\left(
	\begin{array}{cccc}
		h_{0\overline{0}}& \cdots & v_{0\betabar}& \cdots \\ \vdots& & &\\
		v_{\alpha \overline{0}}&  & \Lambda\\ \vdots&
	\end{array}
	\right).
\end{align}
With these simplifications,
\begin{align}
	q(t)=a_{0\overline{0}}+th_{0\overline{0}}-
	\sum_{\alpha}\frac{(u_{0\alphabar}+tv_{0\alphabar})( u_{\alpha \overline{0}}+tv_{\alpha \overline{0}})}{1+t \lambda_\alpha}.
\end{align}
Straightforward computation gives
\begin{align}
	q''(0)=-\sum_\alpha(u_{0\alphabar}\lambda_\alpha-v_{0\alphabar})( u_{\alpha \overline{0}}\lambda_\alpha-v_{\alpha \overline{0}})\leq 0.
\end{align}
Therefore, $f_1$ is concave, and, consequently, $F_1=\log(f_1)$ is concave.

For the concavity of $F_2$, actually, we can show
\begin{align}
	f_2=\frac{1}{\tr(\MG\mathcal{A}^{-1})}
\end{align}
is a concave function of $\mathcal{A}$. This is a simple consequence of the well-known fact that
\begin{align}
\frac{1}{\tr(\mathcal{A}^{-1})}=\frac{\det(\mathcal{A})}{\sigma_{n-1}(\mathcal{A})}
\end{align}
is a concave function of $\mathcal{A}$. Therefore, $F_2=\log(f_2)$ is concave.

\end{proof}
%\section{Geodesics in a Lorentzian Space}

%%%%%%%%%%%%%%%%%%%%%%%%%%%%%%%%%%%%%%
%%%%%%%%%%%%%%%%
%%%%%%%%%%%%%%%%%%%%%%%%%%%%%%%%%%%%%%
\section*{Acknowledgement}
This work is supported by the National Natural Science Foundation of China (No. 12288201) and the Project of Stable Support for Youth Team in Basic Research Field, CAS, (No. YSBR-001). The author would like to thank   Li Chen, Xiuxiong Chen, Jianchun Chu, Jiyuan Han, Laszlo Lempert, Long Li, Yu Li, Guohuan Qiu, Zaijiu Shang, Li Sheng, Bing Wang, Youde Wang, Bin Xu for very helpful discussions.
% He also wants to thank the anonymous referee for his/her many insightful and thoughtful suggestions on improving the paper.
%Jianchun Chu, Guohuan Qiu

%%%%%%%%%%%%%%%%%%%%%%%%%%%%%%%%%%%%%%%%%%%%%%%

{\flushleft Jingchen Hu}\\
Loo-Keng Hua Center for Mathematical Sciences\\
Email:
JINGCHENHOO@GMAIL.COM

\end{document}